\documentclass[11pt]{article}
\usepackage[T1]{fontenc}

\usepackage{authblk}
\usepackage{fullpage}
\usepackage{algorithmic}
\usepackage{algorithm}
\usepackage{epsfig}
\usepackage{graphicx}
\usepackage{latexsym}
\usepackage{amsmath}
\usepackage{amsfonts}
\usepackage{multirow}
\usepackage{amssymb}
\usepackage[sort,numbers]{natbib}
 \usepackage{tikz}

\usetikzlibrary{shapes,arrows,shadows}
\usepackage{amsmath,bm,times}

\usepackage{mathrsfs}
\usepackage{pifont}
\usepackage{yhmath}
\usepackage{bbm}
\usepackage{amsthm}
\usepackage{booktabs}
\usepackage{stmaryrd}
\usepackage{wasysym}
\usepackage{bbm}
\usepackage{array}
\usepackage{enumerate}
\usepackage{dsfont}

\numberwithin{equation}{section}
\numberwithin{figure}{section}

\newtheorem{theorem}{Theorem}[section]
\newtheorem{lemma}[theorem]{Lemma}

\newtheorem{corollary}[theorem]{Corollary}

\newtheorem{remark}[theorem]{Remark}

\theoremstyle{remark}

\makeatletter
\newcommand\figcaption{\def\@captype{figure}\caption}
\newcommand\tabcaption{\def\@captype{table}\caption}
\makeatother

\DeclareMathAlphabet{\mathpzc}{OT1}{pzc}{m}{it}

\begin{document}
\newcounter{my}
\newenvironment{mylabel}
{
\begin{list}{(\roman{my})}{
\setlength{\parsep}{-1mm}
\setlength{\labelwidth}{8mm}
\usecounter{my}}
}{\end{list}}

\newcounter{my2}
\newenvironment{mylabel2}
{
\begin{list}{(\alph{my2})}{
\setlength{\parsep}{-0mm} \setlength{\labelwidth}{8mm}
\setlength{\leftmargin}{3mm}
\usecounter{my2}}
}{\end{list}}

\newcounter{my3}
\newenvironment{mylabel3}
{
\begin{list}{(\alph{my3})}{
\setlength{\parsep}{-1mm}
\setlength{\labelwidth}{8mm}
\setlength{\leftmargin}{10mm}
\usecounter{my3}}
}{\end{list}}

\title{\bf The behavior of  renormalization and related  observables \thanks{\noindent{\bf 2000 Mathematics Subject Classification}\quad 60K35; 60H10}}
\author[a]{Cui Kaiyuan
}
	\author[a]{Gong Fuzhou}
	\affil[a]{Institute of Applied Mathematics, Academy of Mathematics and Systems Science, Chinese Academy of Sciences, Beijing 100080, China,}
	\renewcommand*{\Affilfont}{\small\it}
	\renewcommand\Authands{and}
	\date{}
\maketitle

\vspace{-5em}

\begin{center}\large

\end{center}


\begin{abstract}
In this paper, we  introduce new reference observables to establish a scaling formula in the renormalization group equation. Using the transfer matrix method, we calculate the two point  observables of the one dimensional Ising model without an external field under general boundary conditions. The 
results indicate that the two point observables exhibit exponential decay as the distance between these two sites tends to infinity, except at the critical point.
Corresponding to the  renormalization procedure underlying  the correlation function, we establish a similar procedure for new observables, which aligning with findings in physics. Additionally, from the dynamic point of view, we construct a random system using the stochastic quantization method. We calculate the new observables of this random system under the initial distribution that satisfies Dobrushin Lanford Ruelle(DLR) equations. Furthermore, we formulate a new renormalization scaling equation with respect to the two point observables.  Finally, these results can be extended to a more general case of finite point  observables, and demonstrating independence from the choice of system parameters.\\

\noindent{\bf Keywords:}~Observables; Ising model; Stochastic quantization; Renormalization.
\end{abstract}
	\section{Introduction}\label{sec:1}
Phase transitions and critical phenomena are crucial topics in statistical physics.
The focus upon phase transitions produced a set of remarkably important contributions to physics via the synthesis and construction of the modern renormalization group technique; see \cite{Kadanoff2013document}. 
In 1971, Wilson formulated the comprehensive theory known as \emph{the renormalization group(RG) theory}, combining Kadanoff's block scaling analysis with the idea of renormalization. Shortly thereafter, he was awarded the Nobel Prize for his contributions to understanding critical phenomena. Nowadays, \emph{the renormalization group theory} has been a powerful instrument for analyzing different strongly coupled system and has matured as
the leading computational technique for determining the properties of
systems exhibiting self-similarity under rescaling; see	\cite{Apenko2012document,Robledo2000document}.

Actually, in the classic procedure of  \emph{Real Space Renormalization Group(RSRG)}, the pivotal idea  is  the ``coarse-graining" procedure formulated by Kaddanoff in 1966. It states that, no matter how many times the blocking transformation is iterated, the dominant interactions will be short-ranged; see \cite{Cardy1996book}. The blocking transformation maps the  original ``spins''  to a sequence of new ``block spins'' but leaves the structure of probability distributions or the partition function invariant; see Figure 1 in \cite{Cui2022document} for details. Specifically, during this procedure, the partition function remains unchanged and correlation functions satisfies a scaling formula when the real positive dilatation of spatial points diverges; see \cite{Zinn2014book}. However, it is very hard, even impossible in infinite dimensional, to calculate partition function. Hence, it is essential to identify a suitable observable that not only characterizes the renormalization procedure physically but can also be estimated or calculated mathematically. This constitutes the main goal of this paper.

To gain a deeper understanding of the mechanism
underlying the renormalization group procedure, researchers have explored dynamic perspectives. Boltzmann generalized the concept of entropy to the non-equilibrium situations through the quantity $H$; see
\cite{Balian2005document}. The analogous nonequilibrium ideas can
be associated with Wilsonian renormalization. Zamolodchikov's $c$ -theorem in two dimensional quantum field theory provided an initial  successful pattern; see \cite{Zamolodchikov1986document}. Much work has been done in this direction; see
\cite{Apenko2012document,Cardy1988document,Forte1998document,Amariti2011document}
and new physical quantities such as relative entropy and entanglement entropy, are introduced; see
\cite{Gaite1996document,Gaite1998document,Gaite2000document1,Gaite2000document2,Casini2017document}. 
Generally, the RG procedure can be regard as a system of autonomous ordinary differential equation on the space of coupling constants. This flow typically possesses attractive
fixed points or surfaces corresponding to the physics at large
length scales; see \cite{beny2015document}.
Investigators have made many efforts to study the RG flow, such as  
adding a noise to the RG equations; see \cite{Gaite2004document} 
or establishing the connection between gradient flow and RG flow; see \cite{Narayanan2006document,Luscher2010document,Abe2018document}.
In the functional renormalization group, Carosso denote the time parameter $t$ not as a physical time, but rather an $RSRG$ time, they establish the renormalization group through a stochastic Langevin dynamic and formulate a new Monte Carlo  renormalization group method; see \cite{Carosso2020document}.

However, we approach the evolution differently, focusing not on investigating dynamics in the coupling parameter space but on the renormalization group procedure itself. 
Drawing inspiration from the concept of stochastic quantization, as proposed by Parisi and Wu, we  introduce a novel perspective to reconstruct the renormalization procedure; see \cite{Cui2022document} for more details. Within this framework, the scaling properties of new observables are analyzed. Ultimately, we establish the observable renormalization principle based on an analogous scaling formula. 

As we have stated in paper \cite{Cui2022document}, considering the Ising-like Hamiltonian without an external field 
\begin{align}\label{1.1}
	\mathcal{H} =-K\sum_{i} \phi(x_{i})\phi(x_{i+1})+\frac{\gamma}{2}\sum_{i} \phi^{2}(x_{i}),
\end{align}
where $x_{i}\in \mathbb{Z}$ is the spatial site and $\gamma$ is constant offset such that the Hamiltonian is positive definite; see \cite{Li2018document}. 
As we discussed in \cite{Cui2022document}, inspired by the idea of stochastic quantization in \cite{Damgaard1987Huffel}, under the framework of Euler’s method in time discretisation \cite{Gyongydocument1999},  we obtain a new version of renormalization  procedure formally 
\begin{align}\label{eq:1.7}
	\phi(x_{i},t+1)=sgn\{K\tilde{\Delta}\phi(x_{i},t)+(2K-\gamma+1)\phi(x_{i},t)+\xi(x_{i},t)\}.
\end{align} 
In principle, distinct coarse-graining procedures result in  different renormalization group schemes; see  \cite{Cardy1996book,Morandi2004document}. Hence, it is interesting to investigate the properties of equation~\eqref{eq:1.7}, and it may bring some amazing observations with the help of new tools. Denote that $\phi(x,t)=(\cdots,\phi(x_{-1},t),\phi(x_{0},t),\phi(x_{1},t),\cdots)$, and $\Pi_{\{i\}}x=x_{i},\Pi_{\{i\}}\phi(x,t)=\phi(x_{i},t).$ Then equation~\eqref{eq:1.7}~can be written as
\begin{align}\label{eq:1.8}
	\phi(x,t+1)=\mathcal{P}(\phi(x,t),\xi(x,t)),
\end{align}
where
\begin{align*}
	\Pi_{\{i\}}\mathcal{P}(\phi(x,t),\xi(x,t))=sgn\{K\tilde{\Delta}\phi(x_{i},t)+(2K-\gamma+1)\phi(x_{i},t)+\xi(x_{i},t)\},
\end{align*}
and $\{\xi(x_{i},t)\}_{i\in\mathbb{Z},t\in\mathbb{Z}^{+}}$ are independent Gaussian random variables. The stochastic dynamic~\eqref{eq:1.8}~ can be seen as a Markov chain on state space $E=\{+ 1,-1\}^{\mathbb{Z}}$, which is time-homogeneous if $K$ is a constant. 

To sum up, the objective of this paper is twofold. Firstly, it aims to  reconstruct the transition function in renormalization procedure through a stochastic difference equation.
Secondly, it seeks to identify suitable observables and propose a new principle that can effectively characterize the renormalization procedure. These efforts lay the foundation for further study. Our paper is organized as follows.
In Section 2, for reader's convenience, we list some elementary results in statistical mechanics.
In Section 3, we calculate the two point  observables of Ising model with general boundary condition. The results indicate that the two point  observables exhibit comment properties, such as exponential decay, shared with two point correlation functions. 
In Section 4, we propose a new renormalization procedure in the probabilistic terms based on the observables in Section 2 and  verify our idea 
in the case of one dimensional Ising model. In Section 5, from the stochastic point of view, we consider the convergence of two point observables along the evolution of stochastic dynamic~\eqref{eq:1.8}~.
In Section 6, the calculation of two point observables are extended to general cases.
\section{Preliminaries}\label{sec:2}

Let's consider the one dimensional Ising model with nearest neighbor interaction,  the spins are defined on one dimensional lattice $x_{i}\in \mathds{Z}$. Hamiltonian $\mathcal{H}$ with a constant external field $H$ has the following form	
\begin{align}
	\mathcal{H}=-J\sum_{i=1}^{N}\sigma_{x_{i}}\sigma_{x_{i+1}}-H\sum_{i=1}^{N}\sigma_{x_{i}},
\end{align}
where $J>0$ is the ferromagnetic coupling constant, and $\sigma_{x_{N+1}}=\sigma_{x_{1}}$ which means periodic boundary conditions.
Then the canonical ensemble distribution corresponding to a system of $N$ spins in thermal equilibrium is	$\mathds{P}_{N}=\frac{1}{Z_{N}}e^{-\beta\mathcal{H}}$,  where $Z_{N}$ is the \emph{partition function}, which can be written as
$$Z_{N}=\sum_{\{\sigma_{x_{i}}=\pm 1\}}exp\{\beta J \sum_{i=1}^{N}\sigma_{x_{i}}\sigma_{x_{i+1}}+\beta H\sum_{i=1}^{N}\sigma_{x_{i}}\} $$
Traditionally, the parameter $\beta$ will be incorporated into the coupling constant $J$ and external field $H$, that is to say, $K=\beta J$ and $h=\beta H$. By means of the \emph{tranfer matrix}, one can get some fundamental results of the partition function. Furthermore, the \emph{Real Space Renormalization Group (RSRG)} method is an important approach to study critical phenomena, and during the renormalization group procedure, the partition function remains unchanged.  In the case of one dimensional Ising chain, the "coarse-graining" procedure give us the renormalization group transformation (\emph{RGT}); see \cite{Morandi2004document} for more details. For reader's convenience, we will list some explicit results bellow: 
\begin{enumerate}
	\item $Z_{N}$ can be expressed as $Z_{N}=\lambda^{N}_{+}+\lambda^{N}_{-}$, where $\lambda_{\pm}=e^{K}[\cosh(h)\pm\sqrt{\sinh^{2}(h)+e^{-4K}}]$;
	\item Under thermodynamic limit, the two point correlation function  $\mathcal{G}(x_{i},x_{j})=\mathcal{G}_{i,j}$ can be calculated		\begin{align}\label{2.10}
		\mathcal{G}_{i,j}=sin^{2}(2\phi) \left (\frac{\lambda_{-}}{\lambda_{+}}\right)^{|x_{i}-x_{j}|},
	\end{align}	
	where $\tan(2\phi)=\frac{e^{-2K}}{\sinh(h)}$;
	\item 
	The transformations: $K\rightarrow K^{'}$, and $h\rightarrow h^{'}$are called \emph{RG} transformation(\emph{RGT}), where $K^{'}$ and $h^{'}$ are couple constants of new "coarse-graining" Hamiltonian $\mathcal{H^{'}}$. Particularly, if $h=0$, $K^{'}$ and $h^{'}$ are determined by		\begin{align}\label{2.11}
		K^{'}&=-\frac{1}{2}\log\left(\frac{2}{e^{K}+e^{-K}}\right),
		h^{'}=0.
	\end{align}
\end{enumerate}	
In the field of thermodynamics, the behavior of materials  near their critical points of phase transitions was a quite fashionable subject in the early 1960s; see \cite{Kadanoffdocument2015}. Particularly, the correlation functions play a central role in studying thermodynamic systems, and the $n$-point correlation functions can be defined as
$$\mathcal{G}(x_{1},x_{2},\cdots,x_{n})=\lim_{N \to \infty} \langle \sigma_{x_{1}}\sigma_{x_{2}}\cdots\sigma_{x_{n}}\rangle_{\mathds{P}_{N}},$$
where $ \langle\cdot\rangle_{\mathds{P}_{N}}$ means the expectation with respect to measure $\mathds{P}_{N}$, and $N \to \infty$ is the thermodynamic limit.	

Let us consider some general observables from the  probabilistic point of view.
For any $f \in L^{2}(\mathds{R})$, let $\mathcal{S}^{f}$ denote the observable induced by $f$ under the thermodynamic limit, and define
\begin{align}\label{2.13}
	\mathcal{S}^{f} := \hat{\mathcal{S}}^{f} - \tilde{\mathcal{S}}^{f}:=\lim_{N \to \infty}\hat{\mathcal{S}}_{N}^{f}-\lim_{N \to \infty}\tilde{\mathcal{S}}_{N}^{f},
\end{align}
where $$\hat{\mathcal{S}}_{N}^{f}=\langle f^{2}(\sigma_{x_{1}})\log(f^{2}(\sigma_{x_{1}}))\rangle_{\mathds{P}_{N}}, \tilde{\mathcal{S}}_{N}^{f}=\langle f^{2}(\sigma_{x_{1}})\rangle_{\mathds{P}_{N}}\log(\langle f^{2}(\sigma_{x_{1}})\rangle_{\mathds{P}_{N}}).$$
The $n$-point observables can be defined similarly. A natural intuition is to establish an analogous scaling formula as~\eqref{2.12}~ below based on $n$-point observables. Before that, the elementary properties of  two point observables need to be investigated, that is our main goal in next section. 	

\section{The two point observables of one dimensional Ising model}\label{sec:3}
This section,  under the thermodynamic limit, we study the behavior of two point  observables of one dimensional Ising model without external field. By means of \emph{tranfer matrix} method, we get following results.
\begin{theorem}\label{thm:3.1}
	In the case that $h=0$, assume that $ f,g:\{+1,-1\}\rightarrow \mathds{R} $, and $x_{1}=i, x_{2}=j$. Let $\mathcal{S}^{fg}$ denote the  observable induced by $fg$, then by the definition $\mathcal{S}^{fg} := \hat{\mathcal{S}}^{fg} - \tilde{\mathcal{S}}^{fg}$, we can get
	\begin{align*}
		\hat{\mathcal{S}}^{fg}=\frac{A}{4}+\frac{B}{4} \left (\frac{\lambda_{-}}{\lambda_{+}}\right)^{|x_{2}-x_{1}|},
		\tilde{\mathcal{S}}^{fg}=\left(\frac{C^{4}_{12}C^{2}_{12}}{4}+\frac{\bar{\vartriangle}}{4}\left (\frac{\lambda_{-}}{\lambda_{+}}\right)^{|x_{2}-x_{1}|}\right)\log\left(\frac{C^{4}_{12}C^{2}_{12}}{4}+\frac{\bar{\vartriangle}}{4}\left (\frac{\lambda_{-}}{\lambda_{+}}\right)^{|x_{2}-x_{1}|}\right),
	\end{align*}
	where \begin{align*}
		A=&a+b+c+d,\ B=a-b-c+d,\ C^{2}_{12} = g^{2}(1)+g^{2} (-1),\
		C^{4}_{12}=f^{2}(1)+f^{2}(-1),\\
		\bar{\vartriangle}=&(f^{2}(1)-f^{2}(-1))(g^{2}(1)-g^{2}(-1)),
	\end{align*}
	and
	\begin{align*}
		a=&f^{2}(1)g^{2}(1)\log(f^{2}(1)g^{2}(1)), d=f^{2}(-1)g^{2}(-1)\log(f^{2}(-1)g^{2}(-1)),\\
		b=&f^{2}(-1)g^{2}(1)\log(f^{2}(-1)g^{2}(1)),	c=f^{2}(1)g^{2}(-1)\log(f^{2}(1)g^{2}(-1)).
	\end{align*}
\end{theorem}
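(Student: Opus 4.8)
The plan is to reduce everything to the joint two-site marginal distribution of $(\sigma_{x_1},\sigma_{x_2})$ in the thermodynamic limit, after which both $\hat{\mathcal{S}}^{fg}$ and $\tilde{\mathcal{S}}^{fg}$ become finite sums over the four spin configurations and the statement follows by algebraic regrouping.

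First I would pin down this joint law. Since $h=0$, specializing the angle in \eqref{2.10} gives $\tan(2\phi)=e^{-2K}/\sinh(0)=\infty$, hence $\sin^{2}(2\phi)=1$ and $\mathcal{G}_{i,j}=\left(\lambda_{-}/\lambda_{+}\right)^{|x_{2}-x_{1}|}=:r$. The spin-flip ($\mathbb{Z}_{2}$) symmetry of the $h=0$ Hamiltonian forces the single-site marginals to be uniform, $\mathds{P}(\sigma_{x_{1}}=\pm1)=\tfrac12$, and together with $\langle\sigma_{x_{1}}\sigma_{x_{2}}\rangle=r$ this determines all four joint probabilities: the two aligned configurations $(++),(--)$ each carry weight $(1+r)/4$ and the two anti-aligned configurations $(-+),(+-)$ each carry weight $(1-r)/4$. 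I would confirm this either directly from the $h=0$ transfer matrix $T=\bigl(\begin{smallmatrix}e^{K}&e^{-K}\\ e^{-K}&e^{K}\end{smallmatrix}\bigr)$, projecting onto its top eigenvector $|+\rangle=\tfrac{1}{\sqrt2}(1,1)$ and using $S|+\rangle=|-\rangle$ for $S=\mathrm{diag}(1,-1)$, or simply by solving the two-equation system (uniform marginal plus correlation $r$) for the symmetric joint distribution.

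Next, observe that $f^{2}(\sigma_{x_{1}})g^{2}(\sigma_{x_{2}})\log\!\big(f^{2}(\sigma_{x_{1}})g^{2}(\sigma_{x_{2}})\big)$ is a function on the four-point configuration space taking precisely the values $a,b,c,d$ labelled in the statement (the aligned configurations $(++),(--)$ producing $a,d$; the anti-aligned ones $(-+),(+-)$ producing $b,c$). Taking its expectation under the joint law and grouping the aligned terms (weight $(1+r)/4$) against the anti-aligned terms (weight $(1-r)/4$) yields $\hat{\mathcal{S}}^{fg}=\tfrac14(a+b+c+d)+\tfrac{r}{4}\big((a+d)-(b+c)\big)=\tfrac{A}{4}+\tfrac{B}{4}r$, which is the claimed form.

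For $\tilde{\mathcal{S}}^{fg}$ I would first compute the scalar expectation $\langle f^{2}(\sigma_{x_{1}})g^{2}(\sigma_{x_{2}})\rangle$ by the same averaging; writing $p=f^{2}(1),q=f^{2}(-1),u=g^{2}(1),v=g^{2}(-1)$, the aligned/anti-aligned grouping factors the result as $\tfrac14\big[(p+q)(u+v)+r(p-q)(u-v)\big]$, which is exactly $\tfrac{C^{4}_{12}C^{2}_{12}}{4}+\tfrac{r}{4}\bar{\vartriangle}$. Multiplying this quantity by its own logarithm gives $\tilde{\mathcal{S}}^{fg}$ in the stated form. The only substantive step is the first one, namely correctly identifying the thermodynamic-limit joint distribution; once the four probabilities are in hand the remaining computations are routine. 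I expect the transfer-matrix bookkeeping for the joint two-site law (rather than just the correlation) to demand the most care, in particular confirming that boundary effects vanish as $N\to\infty$ so that the symmetric two-site law above holds exactly.
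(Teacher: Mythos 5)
Your proof is correct, and it takes a genuinely different route from the paper's. You first identify the limiting two-site joint law $\mathds{P}(\sigma_{x_1}=s,\sigma_{x_2}=t)=\tfrac14\bigl(1+st\,r\bigr)$ with $r=(\lambda_-/\lambda_+)^{|x_2-x_1|}$, obtained from the known correlation \eqref{2.10} together with $\mathbb{Z}_2$ symmetry (or by a one-time transfer-matrix computation projecting $T^{N-|j-i|}$ onto the top eigenvector $\tfrac{1}{\sqrt2}(1,1)^T$), and then both $\hat{\mathcal{S}}^{fg}$ and $\|fg\|^2_{L^2}$ are four-term sums whose aligned/anti-aligned regrouping gives $\tfrac{A}{4}+\tfrac{B}{4}r$ and $\tfrac{C^4_{12}C^2_{12}}{4}+\tfrac{\bar{\vartriangle}}{4}r$ at once; indeed $(T^m)_{st}=\tfrac12(\lambda_+^m+st\,\lambda_-^m)$ makes your boundary-effect worry evaporate immediately, since $\mathds{P}(s,t)=\tfrac{1}{4}(\lambda_+^{|j-i|}+st\lambda_-^{|j-i|})(\lambda_+^{N-|j-i|}+st\lambda_-^{N-|j-i|})/Z_N\to\tfrac14(1+st\,r)$. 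The paper instead inserts the function-weighted operators $C^1,\dots,C^4$ directly into the trace, splits the logarithm as $I_1+I_2$, rotates by $O$, and passes to the limit in each trace separately. Your route is shorter and more transparent: the joint law is computed once, the $I_1+I_2$ split and the bookkeeping of four rotated matrices disappear, and it is manifest why $\hat{\mathcal{S}}^{fg}$ and the $L^2$ norm have the same structural form. What the paper's heavier machinery buys is reusability: in Theorem \ref{thm:3.2} the same operator-insertion computation is carried out under fixed boundary spins $\partial_{\Lambda_k}$, where the measure is no longer flip-invariant, single-site marginals are not uniform at finite $N$, and one evaluates matrix elements $\langle\sigma_0|\cdots|\sigma_{N+1}\rangle$ rather than traces --- there your symmetry shortcut is unavailable (though your direct marginal computation would still adapt). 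One small point of rigor: uniform single-site marginals plus the correlation do \emph{not} by themselves determine the four joint probabilities; you also need the joint flip invariance $\mathds{P}(s,t)=\mathds{P}(-s,-t)$, which holds at each finite $N$ under periodic boundary conditions and survives the limit. You invoke this implicitly via the word ``symmetric'' and back it up with the transfer-matrix verification, so this is a presentational gap only, not a mathematical one; like the paper, you also implicitly assume $f^2(\pm1)g^2(\pm1)>0$ (or the convention $0\log 0=0$) so that $a,b,c,d$ are defined.
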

\begin{proof}
	Let's consider $\hat{\mathcal{S}}^{fg}$ firstly, without loss of generality, we may assume that $x_{1}=i<j=x_{2}$. By definition~\eqref{2.13}~, we know that
	\begin{align*}
		\hat{\mathcal{S}}_{N}^{fg}=& \langle(f^{2}(\sigma_{x_{1}})g^{2}(\sigma_{x_{2}}))\log(f^{2}(\sigma_{x_{1}})g^{2}(\sigma_{x_{2}}))\rangle_{\mathds{P}_{N}}\\
		= & \frac{1}{Z_{N}}\sum_{\{\sigma_{k}=\pm 1\}}(f^{2}(\sigma_{i})g^{2}(\sigma_{j}))\log(f^{2}(\sigma_{i})g^{2}(\sigma_{j}))\exp\{K \sum_{k=1}^{N}\sigma_{k}\sigma_{k+1}\}\\
		= &  \frac{1}{Z_{N}}\sum_{\{\sigma_{k}=\pm 1\}}(f^{2}(\sigma_{i})g^{2}(\sigma_{j}))\log(f^{2}(\sigma_{i}))\exp\{K \sum_{k=1}^{N}\sigma_{k}\sigma_{k+1}\}  \\
		&+\frac{1}{Z_{N}}\sum_{\{\sigma_{k}=\pm 1\}}(f^{2}(\sigma_{i})g^{2}(\sigma_{j}))\log(g^{2}(\sigma_{j}))\exp\{K \sum_{k=1}^{N}\sigma_{k}\sigma_{k+1}\}
		:=  I_{1}+I_{2}.
	\end{align*}
	Define the \emph{tranfer matrix} $P$ as an operator such that
	$$ \langle \sigma_{k} | P | \sigma_{k+1} \rangle = e^{K \sigma_{k}\sigma_{k+1}} \qquad  k\neq i-1,i,j-1,j.$$
	and  the other four operators
	\begin{align*}
		\langle \sigma_{i-1} | C^{1} | \sigma_{i+1} \rangle = & \sum_{\sigma_{i}=\pm 1} f^{2}(\sigma_{i})\log(f^{2}(\sigma_{i}))e^{K \sigma_{i}(\sigma_{i-1}+\sigma_{i+1})},\langle \sigma_{j-1} | C^{2} | \sigma_{j+1} \rangle =  \sum_{\sigma_{j}=\pm 1} g^{2}(\sigma_{j})e^{K \sigma_{j}(\sigma_{j-1}+\sigma_{j+1})},\\
		\langle \sigma_{j-1} | C^{3} | \sigma_{j+1} \rangle = & \sum_{\sigma_{j}=\pm 1} g^{2}(\sigma_{j})\log(g^{2}(\sigma_{j}))e^{K \sigma_{j}(\sigma_{j-1}+\sigma_{j+1})},
		\langle \sigma_{i-1} | C^{4} | \sigma_{i+1} \rangle = \sum_{\sigma_{i}=\pm 1} f^{2}(\sigma_{i})e^{K \sigma_{i}(\sigma_{i-1}+\sigma_{i+1})}.
	\end{align*}
	It will be convenient to express these operators as real symmetric matrices
	\begin{equation*}
		P=\left(                 
		\begin{array}{cc}   
			e^{K} & e^{-K} \\  
			e^{-K} & e^{K}  \\  
		\end{array}
		\right)                 
		,\qquad
		C^{k}=\left(                 
		\begin{array}{cc}   
			C^{k}_{11} & C^{k}_{12} \\  
			C^{k}_{21} & C^{k}_{22}  \\  
		\end{array}
		\right)                 
		\qquad   k=1,2,3,4,
	\end{equation*}
	where
	\begin{align}\label{eq:3.2}
		C^{1}_{11}&=f^{2}(1)\log(f^{2}(1))e^{2K}+f^{2}(-1)\log(f^{2}(-1))e^{-2K},\nonumber\\
		C^{1}_{12}&=C^{1}_{21}=f^{2}(1)\log(f^{2}(1))+f^{2}(-1)\log(f^{2}(-1)),\nonumber\\
		C^{1}_{22}&=f^{2}(1)\log(f^{2}(1))e^{-2K}+f^{2}(-1)\log(f^{2}(-1))e^{2K},\nonumber\\
		C^{3}_{11}&=g^{2}(1)\log(g^{2}(1))e^{2K}+g^{2}(-1)\log(g^{2}(-1))e^{-2K},\nonumber\\
		C^{3}_{12}&=C^{3}_{21}=g^{2}(1)\log(g^{2}(1))+g^{2}(-1)\log(g^{2}(-1)),\nonumber\\
		C^{3}_{22}&=g^{2}(1)\log(g^{2}(1))e^{-2K}+g^{2}(-1)\log(g^{2}(-1))e^{2K},\nonumber\\
		C^{2}_{11}&=g^{2}(1)e^{2K}+g^{2}(-1)e^{-2K},
		C^{2}_{12}=C^{2}_{21}=g^{2}(1)+g^{2}(-1),\\
		C^{2}_{22}&=g^{2}(1)e^{-2K}+g^{2}(-1)e^{2K},
		C^{4}_{11}=f^{2}(1)e^{2K}+f^{2}(-1)e^{-2K},\nonumber\\
		C^{4}_{12}&=C^{4}_{21}=f^{2}(1)+f^{2}(-1),
		C^{4}_{22}=f^{2}(1)e^{-2K}+f^{2}(-1)e^{2K}.\nonumber
	\end{align}
	Hence
	\begin{align*}
		I_{1}=\frac{1}{Z_{N}}\sum_{\{\sigma_{k}=\pm1 ,k\neq i,j \}}  & \langle\sigma_{1}|P|\sigma_{2} \rangle
		\cdots \langle \sigma_{i-2}|P|\sigma_{i-1} \rangle \langle \sigma_{i-1} | C^{1} | \sigma_{i+1}\rangle \langle \sigma_{i+1} | P | \sigma_{i+2} \rangle\\
		&\cdots\langle \sigma_{j-2} | P | \sigma_{j-1} \rangle\langle \sigma_{j-1} | C^{2} | \sigma_{j+1} \rangle \langle \sigma_{j+1} | P | \sigma_{j+2}\rangle\cdots\langle \sigma_{N} | P | \sigma_{1}\rangle.
	\end{align*}
	So $$I_{1}=\frac{1}{Z_{N}}Tr\{P^{i-2}C^{1}P^{j-i-2}C^{2}P^{N-j}\}=\frac{1}{Z_{N}}Tr\{P^{N-j+i-2}C^{1}P^{j-i-2}C^{2}\},$$
	and similarly
	$$I_{2}=\frac{1}{Z_{N}}Tr\{P^{N-j+i-2}C^{4}P^{j-i-2}C^{3}\}.$$
	Because $h=0$, $\phi=-\frac{\pi}{4}$, then 
	\begin{equation}
		\label{2.9}	OPO^{T}=P^{'}\\
		=\left(                 
		\begin{array}{cc}   
			e^{K}+e^{-K} & 0 \\  
			0 & e^{K}-e^{-K}  \\  
		\end{array}
		\right)
		=
		\left(                 
		\begin{array}{cc}   
			\lambda_{+} & 0 \\  
			0 & \lambda_{-}  \\  
		\end{array}
		\right),                 
	\end{equation}
	and
	\begin{equation*}
		O=\left(                 
		\begin{array}{cc}   
			\cos\phi & -\sin\phi \\  
			\sin\phi & \cos\phi  \\  
		\end{array}
		\right)
		=
		\left(                 
		\begin{array}{cc}   
			\frac{\sqrt{2}}{2} & \frac{\sqrt{2}}{2} \\  
			-\frac{\sqrt{2}}{2}& \frac{\sqrt{2}}{2}  \\  
		\end{array}
		\right).                 
	\end{equation*}
	We have		$$I_{1}=\frac{1}{Z_{N}}Tr\{{P^{'}}^{N-j+i-2} OC^{1}O^{T}{P^{'}}^{j-i-2}OC^{2}O^{T}\}, I_{2}=\frac{1}{Z_{N}}Tr\{{P^{'}}^{N-j+i-2} OC^{4}O^{T}{P^{'}}^{j-i-2}OC^{3}O^{T}\}.$$
	With some long but  straightforward algebra we obtain
	\begin{align}\label{3.10}
		OC^{k}O^{T}&=\left(                 
		\begin{array}{cc}   
			C^{k}_{11}\cos^{2}\phi+C^{k}_{22}\sin^{2}\phi-C^{k}_{12}\sin2\phi & \frac{C^{k}_{11}-C^{k}_{22}}{2}\sin2\phi+ C^{k}_{12}\cos2\phi\\  \\
			\frac{C^{k}_{11}-C^{k}_{22}}{2}\sin2\phi+ C^{k}_{12}\cos2\phi & C^{k}_{11}\sin^{2}\phi+C^{k}_{22}\cos^{2}\phi+C^{k}_{12}\sin2\phi  \\  
		\end{array}
		\right)\nonumber\\
		&=\left(                 
		\begin{array}{cc}   
			\frac{C^{k}_{11}+C^{k}_{22}}{2}+C^{k}_{12} & -\frac{C^{k}_{11}-C^{k}_{22}}{2}\\  \\
			-\frac{C^{k}_{11}-C^{k}_{22}}{2} & \frac{C^{k}_{11}+C^{k}_{22}}{2}-C^{k}_{12} \\  
		\end{array}
		\right):=\left(                 
		\begin{array}{cc}   
			\hat{C}^{k}_{11}
			& \hat{C}^{k}_{12}\\\hat{C}^{k}_{21} & \hat{C}^{k}_{22} \\  
		\end{array}	\right),\qquad   k=1,2,3,4.
	\end{align}
	Then
	\begin{align*}
		I_{1}=\frac{1}{Z_{N}}Tr\{&
		\left(                 
		\begin{array}{cc}   
			\hat{C}^{1}_{11}\lambda^{N-4}_{+}   & \hat{C}^{1}_{12}\lambda^{N-j+i-2}_{+}\lambda^{j-i-2} _{-}\\  \\
			\hat{C}^{1}_{12}\lambda^{N-j+i-2}_{-}\lambda^{j-i-2} _{+} & \hat{C}^{1}_{22}\lambda^{N-4} _{-} \\  
		\end{array}
		\right)
		\left(                 
		\begin{array}{cc}   
			\hat{C}^{2}_{11}& \hat{C}^{2}_{12}\\
			\hat{C}^{2}_{21} & \hat{C}^{2}_{22}
		\end{array}
		\right)\}.
	\end{align*}
	So we obtain
	\begin{align*}
		I_{1}=\frac{1}{\lambda^{N}_{+}+\lambda^{N}_{-}}
		\{&\hat{C}^{1}_{11}\hat{C}^{2}_{11}\lambda^{N-4}_{+} +\hat{C}^{1}_{12}\hat{C}^{2}_{12}\lambda^{N-j+i-2}_{+}\lambda^{j-i-2} _{-}+\hat{C}^{1}_{12}\hat{C}^{2}_{12}\lambda^{N-j+i-2}_{-}\lambda^{j-i-2} _{+}+\hat{C}^{1}_{22}\hat{C}^{2}_{22}\lambda^{N-4} _{-}\}.
	\end{align*}
	Recall that $0\leq\lambda_{-}<\lambda_{+}$, then
	\begin{align*}
		\lim_{N \to \infty}I_{1}
		=\hat{C}^{1}_{11}\hat{C}^{2}_{11}\lambda^{-4}_{+}   +\hat{C}^{1}_{12}\hat{C}^{2}_{12}\left(\frac{\lambda _{-}}{\lambda_{+}}\right)^{j-i} \lambda^{-2}_{+}\lambda^{-2}_{-}.
	\end{align*}
	Using the symbols introduced in equation~\eqref{eq:3.2}~, it's easy to see
	$$\lim_{N \to \infty}I_{1}=\frac{1}{4}C^{1}_{12}C^{2}_{12}
	+\frac{\hat{\vartriangle}}{4}\left(\frac{\lambda _{-}}{\lambda_{+}}\right)^{j-i}, \lim_{N \to \infty}I_{2}=\frac{1}{4}C^{3}_{12}C^{4}_{12}
	+\frac{\tilde{\vartriangle}}{4}\left(\frac{\lambda _{-}}{\lambda_{+}}\right)^{j-i},$$
	where
	\begin{align*}
		&\hat{\vartriangle}=\left(f^{2}(1)\log(f^{2}(1))-f^{2}(-1)\log(f^{2}(-1)) \right)\left (g^{2}(1)-g^{2}(-1)\right),\\
		&\tilde{\vartriangle}=\left(g^{2}(1)\log(g^{2}(1))-g^{2}(-1)\log(g^{2}(-1)) \right)\left (f^{2}(1)-f^{2}(-1)\right).
	\end{align*}
	It follows that
	$$\hat{\mathcal{S}}^{fg}=\lim_{N \to \infty}I_{1}+I_{2}=\frac{A}{4}+\frac{B}{4} \left (\frac{\lambda_{-}}{\lambda_{+}}\right)^{j-i}.$$
	Now, let's begin to calculate the second term $\tilde{\mathcal{S}}^{fg}$,
	\begin{align*}
		\|fg\|^{2}_{L^{2}(\{\sigma\},\mathds{P}_{N})}=& \frac{1}{Z_{N}}\sum_{\{\sigma_{k}=\pm 1\}}f^{2}(\sigma_{i})g^{2}(\sigma_{j})\exp\{K \sum_{k=1}^{N}\sigma_{k}\sigma_{k+1}\}\\
		=&\frac{1}{Z_{N}}Tr\{{P^{'}}^{N-j+i-2} OC^{4}O^{T}{P^{'}}^{j-i-2}OC^{2}O^{T}\},
	\end{align*}
	the remainder of the calculation can be finished in the same way as shown before, and 
	$$\lim_{N \to \infty}\|fg\|^{2}_{L^{2}(\{\sigma\},\mathds{P}_{N})}=\frac{1}{4}C^{4}_{12}C^{2}_{12}
	+\frac{\bar{\vartriangle}}{4}\left(\frac{\lambda _{-}}{\lambda_{+}}\right)^{j-i},$$
	where
	$$\bar{\vartriangle}=\left (g^{2}(1)-g^{2}(-1)\right)\left (f^{2}(1)-f^{2}(-1)\right).$$
	Finally,
	\begin{align*}
		\tilde{\mathcal{S}}^{fg}=&\lim_{N \to \infty}\|fg\|^{2}_{L^{2}(\{\sigma\},\mathds{P}_{N})}\log(\|fg\|^{2}_{L^{2}(\{\sigma\},\mathds{P}_{N})})\\
		&=\left(\frac{C^{4}_{12}C^{2}_{12}}{4}+\frac{\bar{\vartriangle}}{4}\left (\frac{\lambda_{-}}{\lambda_{+}}\right)^{j-i}\right)\log\left(\frac{C^{4}_{12}C^{2}_{12}}{4}+\frac{\bar{\vartriangle}}{4}\left (\frac{\lambda_{-}}{\lambda_{+}}\right)^{j-i}\right).
	\end{align*}
	which completes the proof of this theorem.
\end{proof}
\paragraph{Remark.}\label{re3.2}
Notice that the behavior of two point correlation function is similar to the two point  observable  as $|x_{2}-x_{1}|\rightarrow \infty$, both of them decay exponentially.

Under the periodic boundary conditions, we get results above for one dimensional Ising model without external field. The methods we used can be extended to treat other boundary conditions. For any finite subset $\Lambda\subset \mathds{Z}$, denote that $\partial_{\Lambda}$ is the boundary of $\Lambda$. In our paper, we consider a special case $\Lambda=\{1,2,\cdots,N\}$ and $\partial_{\Lambda}=\{0,N+1\}$. When $h=0$, for one dimensional Ising model with nearest neighbor interaction, the Gibbs measure associated to it is given by	
\begin{align}\label{eq:3.5}	\mu_{\Lambda}=\frac{1}{Z_{\Lambda}}\exp\{K\sigma_{0}\sigma_{1}+K\sum\limits_{x\in \Lambda }\sigma_{x}\sigma_{x+1}\},
\end{align}
where $Z_{\Lambda}$ is the partition function
\begin{align}\label{eq:3.6}	Z_{\Lambda}=\sum_{\sigma_{x},x\in\Lambda}\exp\{K\sigma_{0}\sigma_{1}+K\sum\limits_{x\in \Lambda }\sigma_{x}\sigma_{x+1}\}.
\end{align}
Define four sets of spins as follows
\begin{align*}
	\partial_{\Lambda_{1}}&=(\sigma_{0}=1,\sigma_{N+1}=1),	\partial_{\Lambda_{2}}=(\sigma_{0}=-1,\sigma_{N+1}=1),\\
	\partial_{\Lambda_{3}}&=(\sigma_{0}=1,\sigma_{N+1}=-1),\partial_{\Lambda_{4}}=(\sigma_{0}=-1,\sigma_{N+1}=-1),
\end{align*}
and $Z_{\partial_{\Lambda_{i}}}$ represents the partition function of system under  boundary conditions $\partial_{\Lambda_{i}},i=1,2,3,4$. 
\begin{theorem}\label{thm:3.2}
	Under the same assumption in theorem \ref{thm:3.1}. Let $\mathcal{S}^{fg,\partial_{\Lambda}}$ represents the  observable  induced by $fg$ with respect to $\mu_{\Lambda}$ under boundary condition $\partial_{\Lambda}$, which can be defined by $\mathcal{S}^{fg,\partial_{\Lambda}} := \hat{\mathcal{S}}^{fg,\partial_{\Lambda}} - \tilde{\mathcal{S}}^{fg,\partial_{\Lambda}}$, then we can get
	\begin{align*}
		\left(
		\begin{array}{cc}   
			\hat{\mathcal{S}}^{fg,\partial_{\Lambda_{1}}} & \hat{\mathcal{S}}^{fg,\partial_{\Lambda_{3}}}\\  
			\hat{\mathcal{S}}^{fg,\partial_{\Lambda_{2}}}& \hat{\mathcal{S}}^{fg,\partial_{\Lambda_{4}}} \\  
		\end{array}
		\right)
		&=\left(                 
		\begin{array}{cc}   
			(L_{11}-L_{21}+M_{11}-M_{21})\lambda_{+}^{-j-1}	& (L_{11}-L_{21}+M_{11}-M_{21})\lambda_{+}^{-j-1}\\  
			(L_{11}+L_{21}+M_{11}+M_{21})\lambda_{+}^{-j-1}& (L_{11}+L_{21}+M_{11}+M_{21})\lambda_{+}^{-j-1} \\  
		\end{array}
		\right),
	\end{align*}
	and
	\begin{align*}
		\left(
		\begin{array}{cc}   
			\tilde{\mathcal{S}}^{fg,\partial_{\Lambda_{1}}} & \tilde{\mathcal{S}}^{fg,\partial_{\Lambda_{3}}}\\  
			\tilde{\mathcal{S}}^{fg,\partial_{\Lambda_{2}}}& \tilde{\mathcal{S}}^{fg,\partial_{\Lambda_{4}}} \\  
		\end{array}
		\right)
		&=\left(                 
		\begin{array}{cc}   
			\hat{R}_{1}& \hat{R}_{1}\\  
			\hat{R}_{2}& \hat{R}_{2} \\  
		\end{array}
		\right),
	\end{align*}
	where $M_{11},L_{11},\hat{R}_{1},M_{21},L_{21},\hat{R}_{12}$ are constants depend on $f,g$ and $\lambda_{+},\lambda_{-}$.	
\end{theorem}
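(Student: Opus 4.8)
The plan is to reproduce the transfer-matrix computation of Theorem~\ref{thm:3.1}, replacing the periodic trace by a fixed matrix element dictated by the pinned boundary spins. Since $\sigma_0$ and $\sigma_{N+1}$ are now held fixed, the Gibbs weight in~\eqref{eq:3.5} is a product of $N+1$ bond factors running from $\sigma_0$ to $\sigma_{N+1}$, so the partition function~\eqref{eq:3.6} becomes the matrix element $Z_{\partial_\Lambda}=\langle\sigma_0|P^{N+1}|\sigma_{N+1}\rangle$, and the numerator of $\hat{\mathcal S}_N^{fg,\partial_\Lambda}$ splits, exactly as the $I_1+I_2$ decomposition in Theorem~\ref{thm:3.1}, into $\tfrac{1}{Z_{\partial_\Lambda}}\langle\sigma_0|P^{i-1}C^1P^{j-i-2}C^2P^{N-j}|\sigma_{N+1}\rangle$ and the analogous term with $C^4,C^3$, reusing the operators of~\eqref{eq:3.2} verbatim. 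The only structural change from the periodic case is that the free summations at the two ends are replaced by contraction against the boundary kets $|\sigma_0\rangle$ and $|\sigma_{N+1}\rangle$.

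I would then diagonalize exactly as before, $P=O^TP'O$ with $P'=\mathrm{diag}(\lambda_+,\lambda_-)$, insert $O^TO$ between all factors, and recycle $\hat C^k=OC^kO^T$ from~\eqref{3.10}. At $h=0$ the boundary kets transform to $O|{+}\rangle=(\tfrac{\sqrt2}{2},-\tfrac{\sqrt2}{2})^T$ and $O|{-}\rangle=(\tfrac{\sqrt2}{2},\tfrac{\sqrt2}{2})^T$. Because $i,j$ stay fixed while $N\to\infty$, only the rightmost factor diverges: $(P')^{N-j}O|\sigma_{N+1}\rangle$ is dominated by its $\lambda_+^{N-j}$ component along the first eigenvector, while $Z_{\partial_\Lambda}\sim\tfrac12\lambda_+^{N+1}$ for all four boundary conditions, since the first component of $O|\sigma_0\rangle$ is $\tfrac{\sqrt2}{2}$ regardless of $\sigma_0$. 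The decisive point is that both transformed right-boundary vectors also share the first component $\tfrac{\sqrt2}{2}$, so in the ratio numerator$/Z_{\partial_\Lambda}$ this coefficient and the power $\lambda_+^{N}$ cancel, leaving the overall scale $\lambda_+^{-j-1}$ and complete independence of $\sigma_{N+1}$ — precisely the degeneracy of the first and second columns in the claimed matrices.

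It then remains to collect the surviving finite-power terms. Contracting the left end against $\langle\tilde\sigma_0|$ retains both components, and since the second component equals $-\tfrac{\sqrt2}{2}$ for $\sigma_0=1$ but $+\tfrac{\sqrt2}{2}$ for $\sigma_0=-1$, the two cases differ exactly by the sign of the second-component contribution. Writing $L_{11},L_{21}$ (resp.\ $M_{11},M_{21}$) for the first- and second-component parts of the limit of the $C^1C^2$-term (resp.\ the $C^4C^3$-term), the $\sigma_0=1$ rows assemble into $(L_{11}-L_{21}+M_{11}-M_{21})\lambda_+^{-j-1}$ and the $\sigma_0=-1$ rows into the corresponding sum, as stated. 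For $\tilde{\mathcal S}^{fg,\partial_\Lambda}$ I would run the same projection argument on the single norm functional, replacing the pairs $(C^1,C^2),(C^4,C^3)$ by $(C^4,C^2)$ with no logarithm, to obtain $\lim_N\|fg\|^2_{L^2(\mu_\Lambda)}$, and then form $\|fg\|^2\log\|fg\|^2$; this produces the constants $\hat R_1$ for $\sigma_0=1$ and $\hat R_2$ for $\sigma_0=-1$, again independent of $\sigma_{N+1}$.

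The genuinely load-bearing step is not the algebra, which is as routine as in Theorem~\ref{thm:3.1}, but the observation that the first components of $O|{+}\rangle$ and $O|{-}\rangle$ coincide at $\tfrac{\sqrt2}{2}$ when $h=0$ and $\phi=-\tfrac{\pi}{4}$: this coincidence is what renders the distant boundary irrelevant in the thermodynamic limit and forces the two-column degeneracy, and it is special to the zero-field case. The main care is therefore in tracking the subdominant $\lambda_-$-contributions and their signs so as to separate the $\sigma_0=\pm1$ outputs correctly, and in checking that the neglected $\lambda_-^{N}$ pieces of both numerator and denominator indeed vanish in the ratio over the admissible range $1\le i<j\le N$.
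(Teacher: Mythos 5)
Your proposal is correct and follows essentially the same route as the paper's proof: replace the periodic trace by the boundary matrix element $\frac{1}{Z_{\partial_{\Lambda_k}}}\langle\sigma_{0}|P^{i-1}C^{1}P^{j-i-2}C^{2}P^{N-j}|\sigma_{N+1}\rangle$ (and its $(C^{4},C^{3})$ and $(C^{4},C^{2})$ analogues), diagonalize $P=O^{T}P^{'}O$, and let the dominance of $\lambda_{+}$ over $\lambda_{-}$ kill the subleading terms against $Z_{\partial_{\Lambda_{k}}}\sim\tfrac{1}{2}\lambda_{+}^{N+1}$, which yields the overall scale $\lambda_{+}^{-j-1}$, the independence of $\sigma_{N+1}$ (the equal columns), and the splitting into $M_{11}\mp M_{21}$, $L_{11}\mp L_{21}$, $\hat{R}_{1}$, $\hat{R}_{2}$ according to the sign of $\sigma_{0}$. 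Your explanation of the column degeneracy via the coinciding first components of $O|{+}\rangle$ and $O|{-}\rangle$ is a cleaner spectral gloss on what the paper obtains by explicit $2\times 2$ matrix multiplication, but it is the same computation.
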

\begin{proof}
	By means of the \emph{tranfer matrix} method we used in theorem \ref{thm:3.1}, equation~\eqref{eq:3.6}~can be written as
	\begin{align*}			Z_{\Lambda}=\sum\limits_{\{\sigma_{k}=\pm1,k\in\Lambda\}}  & \langle\sigma_{0}|P|\sigma_{1} \rangle \langle\sigma_{1}|P|\sigma_{2} \rangle
		\cdots \langle \sigma_{N-1} | P | \sigma_{N}\rangle\langle \sigma_{N} | P | \sigma_{N+1}\rangle=\langle\sigma_{0}|P^{N+1}|\sigma_{N+1} \rangle.
	\end{align*}
	Thanks to equation~\eqref{2.9}~
	\begin{align*}			P^{N+1}&=O^{T}
		\left(                 
		\begin{array}{cc}   
			\lambda^{N+1}_{+} & 0 \\  
			0 & \lambda^{N+1}_{-}  \\  
		\end{array}
		\right)O=\frac{1}{2}\left(                 
		\begin{array}{cc}   
			\lambda^{N+1}_{+}+\lambda^{N+1}_{-} & \lambda^{N+1}_{+}-\lambda^{N+1}_{-} \\  
			\lambda^{N+1}_{+}-\lambda^{N+1}_{-} & \lambda^{N+1}_{+}+\lambda^{N+1}_{-}  \\  
		\end{array}
		\right):=\left(                 
		\begin{array}{cc}   
			Z_{\partial_{\Lambda_{1}}} & Z_{\partial_{\Lambda_{3}}} \\  
			Z_{\partial_{\Lambda_{2}}} & Z_{\partial_{\Lambda_{4}}}  
		\end{array}
		\right).
	\end{align*}
	Obviously, for $k=1,2,3,4$, under the thermodynamic limit
	\begin{align*}			F=\lim_{N\rightarrow+\infty}\frac{\log(Z_{\partial_{\Lambda_{k}}})}{N+1}=\lim_{N\rightarrow+\infty}\frac{\log\lambda^{N+1}_{+}+\log(1\pm(\frac{\lambda_{-}}{\lambda_{+}})^{N+1})}{N+1}=\log\lambda_{+},
	\end{align*}
	which is independent to the boundary conditions $\partial_{\Lambda_{i}}$.
	
	For the first term
	$\hat{\mathcal{S}}^{fg}$, we also assume that $x_{1}=i<j=x_{2}$. By definition~\eqref{2.13}~, for $k=1,2,3,4$, the key point is to calculate terms
	\begin{align*}
		\hat{\mathcal{S}}_{N}^{fg,\partial_{\Lambda_{k}}}=&\frac{1}{Z_{\partial_{\Lambda_{k}}}}\sum_{\{\sigma_{x}=\pm 1,x\in \Lambda\}}(f^{2}(\sigma_{i})g^{2}(\sigma_{j}))\log(f^{2}(\sigma_{i})g^{2}(\sigma_{j}))\exp\{K\sigma_{0}\sigma_{1}+K\sum\limits_{x\in \Lambda }\sigma_{x}\sigma_{x+1}\}\\
		= &  \frac{1}{Z_{\partial_{\Lambda_{k}}}}\sum_{\{\sigma_{x}=\pm 1,x\in \Lambda\}}(f^{2}(\sigma_{i})g^{2}(\sigma_{j}))\log(f^{2}(\sigma_{i}))\exp\{K\sigma_{0}\sigma_{1}+K\sum\limits_{x\in \Lambda }\sigma_{x}\sigma_{x+1}\}  \\
		+&\frac{1}{Z_{\partial_{\Lambda_{k}}}}\sum_{\{\sigma_{x}=\pm 1,x\in \Lambda\}}(f^{2}(\sigma_{i})g^{2}(\sigma_{j}))\log(g^{2}(\sigma_{j}))\exp\{K\sigma_{0}\sigma_{1}+K\sum\limits_{x\in \Lambda }\sigma_{x}\sigma_{x+1}\}
		:= I^{k}_{1}+I^{k}_{2}.
	\end{align*}
	Using the symbols we introduced in theorem \ref{thm:3.1}, under the boundary condition $\partial_{\Lambda_{k}}$
	\begin{align*}
		I^{k}_{1}=\frac{1}{Z_{\partial_{\Lambda_{k}}}}\sum_{\begin{subarray}{l}
				\sigma_{x}=\pm1 ,\\
				x\neq i,j,\in \Lambda
		\end{subarray}} 
		& \langle\sigma_{0}|P|\sigma_{1} \rangle\langle\sigma_{1}|P|\sigma_{2} \rangle
		\cdots \langle \sigma_{i-2}|P|\sigma_{i-1} \rangle \langle \sigma_{i-1} | C^{1} | \sigma_{i+1}\rangle \langle \sigma_{i+1} | P | \sigma_{i+2} \rangle\\
		&\cdots\langle \sigma_{j-2} | P | \sigma_{j-1} \rangle\langle \sigma_{j-1} | C^{2} | \sigma_{j+1} \rangle \langle \sigma_{j+1} | P | \sigma_{j+2}\rangle\cdots\langle \sigma_{N} | P | \sigma_{N+1}\rangle,\\
		I^{k}_{2}=\frac{1}{Z_{\partial_{\Lambda_{k}}}}\sum_{\begin{subarray}{l}
				\sigma_{x}=\pm1 ,\\
				x\neq i,j,\in \Lambda
		\end{subarray}} 
		& \langle\sigma_{0}|P|\sigma_{1} \rangle\langle\sigma_{1}|P|\sigma_{2} \rangle
		\cdots \langle \sigma_{i-2}|P|\sigma_{i-1} \rangle \langle \sigma_{i-1} | C^{4} | \sigma_{i+1}\rangle \langle \sigma_{i+1} | P | \sigma_{i+2} \rangle\\
		&\cdots\langle \sigma_{j-2} | P | \sigma_{j-1} \rangle\langle \sigma_{j-1} | C^{3} | \sigma_{j+1} \rangle \langle \sigma_{j+1} | P | \sigma_{j+2}\rangle\cdots\langle \sigma_{N} | P | \sigma_{N+1}\rangle.
	\end{align*}
	Hence, for $\partial_{\Lambda_{k}}=(\sigma_{0},\sigma_{N+1})$
	\begin{align*}
		I^{k}_{1} &=\frac{1}{Z_{\partial_{\Lambda_{k}}}}
		\langle\sigma_{0}|P^{i-1} C^{1} P^{j-i-2}C^{2}P^{N-j} | \sigma_{N+1}\rangle,
		I^{k}_{2}=\frac{1}{Z_{\partial_{\Lambda_{k}}}}
		\langle\sigma_{0}|P^{i-1} C^{4} P^{j-i-2}C^{3}P^{N-j} | \sigma_{N+1}\rangle.
	\end{align*}
	Noticed that
	\begin{align*}
		P^{i-1} C^{1} P^{j-i-2}C^{2}P^{N-j}&=O^{T}
		{P^{'}}^{i-1}OC^{1}O^{T}
		{P^{'}}^{j-i-2}OC^{2}O^{T}{P^{'}}^{N-j}O,\\
		P^{i-1} C^{4} P^{j-i-2}C^{3}P^{N-j}&=O^{T}
		{P^{'}}^{i-1}OC^{4}O^{T}
		{P^{'}}^{j-i-2}OC^{3}O^{T}{P^{'}}^{N-j}O   .
	\end{align*}
	Let
	\begin{align*}
		M&=\left(                 
		\begin{array}{cc}   
			M_{11} & M_{12} \\  
			M_{21}& M_{22} \\  
		\end{array}
		\right)={P^{'}}^{i-1}OC^{1}O^{T}
		{P^{'}}^{j-i-2}OC^{2}O^{T},\\
		L&=\left(                 
		\begin{array}{cc}   
			L_{11} & L_{12} \\  
			L_{21}& L_{22} \\  
		\end{array}
		\right)={P^{'}}^{i-1}OC^{4}O^{T}
		{P^{'}}^{j-i-2}OC^{3}O^{T},\\
		R&=\left(                 
		\begin{array}{cc}   
			R_{11} & R_{12} \\  
			R_{21}& R_{22} \\  
		\end{array}
		\right)={P^{'}}^{i-1}OC^{4}O^{T}
		{P^{'}}^{j-i-2}OC^{2}O^{T}.
	\end{align*}
	Then 
	\begin{align*}
		P^{i-1}& C^{1} P^{j-i-2}C^{2}P^{N-j}\\
		&=O^{T}M{P^{'}}^{N-j}O=\left(
		\begin{array}{cc}   
			\frac{\sqrt{2}}{2} & -\frac{\sqrt{2}}{2} \\  
			\frac{\sqrt{2}}{2}& \frac{\sqrt{2}}{2}  \\  
		\end{array}
		\right)\left(                 
		\begin{array}{cc}   
			M_{11} & M_{12} \\  
			M_{21}& M_{22} \\  
		\end{array}
		\right)\left(                 
		\begin{array}{cc}   
			\lambda_{+}^{N-j} & 0 \\  
			0& \lambda_{-}^{N-j} \\  
		\end{array}
		\right)\left(
		\begin{array}{cc}   
			\frac{\sqrt{2}}{2} & \frac{\sqrt{2}}{2} \\  
			-\frac{\sqrt{2}}{2}& \frac{\sqrt{2}}{2}  \\  
		\end{array}
		\right)\\
		&=\frac{1}{2}\left(                 
		\begin{array}{cc}   
			(M_{11}-M_{21})\lambda_{+}^{N-j}-(M_{12}-M_{22})\lambda_{-}^{N-j} & (M_{11}-M_{21})\lambda_{+}^{N-j}+(M_{12}-M_{22})\lambda_{-}^{N-j} \\  
			(M_{21}+M_{11})\lambda_{+}^{N-j}+(M_{22}+M_{12})\lambda_{-}^{N-j}& (M_{21}+M_{11})\lambda_{+}^{N-j}+(M_{22}+M_{12})\lambda_{-}^{N-j} \\  
		\end{array}
		\right).
	\end{align*}
	Due to~\eqref{3.10}~, 
	\begin{align*}
		\left(                 
		\begin{array}{cc}   
			M_{11} & M_{12} \\  
			M_{21}& M_{22} \\  
		\end{array}
		\right)&=\left(                 
		\begin{array}{cc}   
			\lambda_{+}^{i-1} & 0 \\  
			0& \lambda_{-}^{i-1} \\  
		\end{array}
		\right)\left(                 
		\begin{array}{cc}   
			\hat{C}^{1}_{11}
			& \hat{C}^{1}_{12}\\\hat{C}^{1}_{21} & \hat{C}^{1}_{22}  
		\end{array}	\right)
		\left(                 
		\begin{array}{cc}   
			\lambda_{+}^{j-i-2} & 0 \\  
			0& \lambda_{-}^{j-i-2}   
		\end{array}
		\right)\left(                 
		\begin{array}{cc}   
			\hat{C}^{2}_{11}
			& \hat{C}^{2}_{12}\\\hat{C}^{2}_{21} & \hat{C}^{2}_{22}   
		\end{array}	\right)\\
		&=\left(                 
		\begin{array}{cc}   
			\hat{C}^{1}_{11}\lambda_{+}^{i-1}
			& \hat{C}^{1}_{12}\lambda_{+}^{i-1}\\ \hat{C}^{1}_{21}\lambda_{-}^{i-1} & \hat{C}^{1}_{22}\lambda_{-}^{i-1}
		\end{array}	\right)
		\left(                 
		\begin{array}{cc}   
			\hat{C}^{2}_{11}\lambda_{+}^{j-i-2} 
			& \hat{C}^{2}_{12}\lambda_{+}^{j-i-2} \\\hat{C}^{2}_{21}\lambda_{-}^{j-i-2} & \hat{C}^{2}_{22}\lambda_{-}^{j-i-2}   
		\end{array}	\right)\\
		&=\left(                 
		\begin{array}{cc}   
			\hat{C}^{1}_{11}\hat{C}^{2}_{11}\lambda_{+}^{j-3}+\hat{C}^{1}_{12}\hat{C}^{2}_{21}\lambda_{+}^{i-1}\lambda_{-}^{j-i-2}
			& \hat{C}^{1}_{11}\hat{C}^{2}_{12}\lambda_{+}^{i-1}\lambda_{-}^{j-i-2}+\hat{C}^{1}_{12}\hat{C}^{2}_{22}\lambda_{+}^{i-1}\lambda_{-}^{j-i-2}\\ \hat{C}^{1}_{21}\hat{C}^{2}_{11}\lambda_{+}^{j-i-2}\lambda_{-}^{i-1}+\hat{C}^{1}_{22}\hat{C}^{2}_{21}\lambda_{-}^{j-3} & \hat{C}^{1}_{21}\hat{C}^{2}_{12}\lambda_{+}^{j-i-2}\lambda_{-}^{i-1}+\hat{C}^{1}_{22}\hat{C}^{2}_{22}\lambda_{-}^{j-3}
		\end{array}	\right),
	\end{align*}
	and
	\begin{align*}
		\left(                 
		\begin{array}{cc}   
			L_{11} & L_{12} \\  
			L_{21}& L_{22} \\  
		\end{array}
		\right)
		&=\left(                 
		\begin{array}{cc}   
			\hat{C}^{4}_{11}\hat{C}^{3}_{11}\lambda_{+}^{j-3}+\hat{C}^{4}_{12}\hat{C}^{3}_{21}\lambda_{+}^{i-1}\lambda_{-}^{j-i-2}
			& \hat{C}^{4}_{11}\hat{C}^{3}_{12}\lambda_{+}^{i-1}\lambda_{-}^{j-i-2}+\hat{C}^{4}_{12}\hat{C}^{3}_{22}\lambda_{+}^{i-1}\lambda_{-}^{j-i-2}\\ \hat{C}^{4}_{21}\hat{C}^{3}_{11}\lambda_{+}^{j-i-2}\lambda_{-}^{i-1}+\hat{C}^{4}_{22}\hat{C}^{3}_{21}\lambda_{-}^{j-3} & \hat{C}^{4}_{21}\hat{C}^{3}_{12}\lambda_{+}^{j-i-2}\lambda_{-}^{i-1}+\hat{C}^{4}_{22}\hat{C}^{3}_{22}\lambda_{-}^{j-3}
		\end{array}	\right),\\
		\left(                 
		\begin{array}{cc}   
			R_{11} & R_{12} \\  
			R_{21}& R_{22} \\  
		\end{array}
		\right)
		&=\left(                 
		\begin{array}{cc}   
			\hat{C}^{4}_{11}\hat{C}^{2}_{11}\lambda_{+}^{j-3}+\hat{C}^{4}_{12}\hat{C}^{2}_{21}\lambda_{+}^{i-1}\lambda_{-}^{j-i-2}
			& \hat{C}^{4}_{11}\hat{C}^{2}_{12}\lambda_{+}^{i-1}\lambda_{-}^{j-i-2}+\hat{C}^{4}_{12}\hat{C}^{2}_{22}\lambda_{+}^{i-1}\lambda_{-}^{j-i-2}\\ \hat{C}^{4}_{21}\hat{C}^{2}_{11}\lambda_{+}^{j-i-2}\lambda_{-}^{i-1}+\hat{C}^{4}_{22}\hat{C}^{2}_{21}\lambda_{-}^{j-3} & \hat{C}^{4}_{21}\hat{C}^{2}_{12}\lambda_{+}^{j-i-2}\lambda_{-}^{i-1}+\hat{C}^{4}_{22}\hat{C}^{2}_{22}\lambda_{-}^{j-3}
		\end{array}	\right).
	\end{align*}
	Finally
	\begin{align}\label{eq:3.13}
		\left(         
		\begin{array}{cc}   
			I_{1}^{1} & I_{1}^{3}\\  
			I_{1}^{2}& I_{1}^{4} \\  
		\end{array}
		\right) 
		&=\frac{1}{2}\left(                 
		\begin{array}{cc}   
			\frac{(M_{11}-M_{21})\lambda_{+}^{N-j}-(M_{12}-M_{22})\lambda_{-}^{N-j} }{Z_{\partial_{\Lambda_{1}}}}	& \frac{(M_{11}-M_{21})\lambda_{+}^{N-j}+(M_{12}-M_{22})\lambda_{-}^{N-j}} {Z_{\partial_{\Lambda_{3}}}}\\  
			\frac{(M_{21}+M_{11})\lambda_{+}^{N-j}+(M_{22}+M_{12})\lambda_{-}^{N-j}}{Z_{\partial_{\Lambda_{2}}}}& \frac{(M_{21}+M_{11})\lambda_{+}^{N-j}+(M_{22}+M_{12})\lambda_{-}^{N-j}}{Z_{\partial_{\Lambda_{4}}}} \\  
		\end{array}
		\right).
	\end{align}
	It's easy to see
	\begin{align*}
		\lim_{N \to \infty}\left(         
		\begin{array}{cc}   
			I_{1}^{1} & I_{1}^{3}\\  
			I_{1}^{2}& I_{1}^{4} \\  
		\end{array}
		\right) 
		&=\left(                 
		\begin{array}{cc}   
			(M_{11}-M_{21})\lambda_{+}^{-j-1}	& (M_{11}-M_{21})\lambda_{+}^{-j-1}\\  
			(M_{11}+M_{21})\lambda_{+}^{-j-1}& (M_{11}+M_{21})\lambda_{+}^{-j-1} \\  
		\end{array}
		\right).
	\end{align*}
	Similarly,
	\begin{align*}
		\lim_{N \to \infty}\left(         
		\begin{array}{cc}   
			I_{2}^{1} & I_{2}^{3}\\  
			I_{2}^{2}& I_{2}^{4} \\  
		\end{array}
		\right) 
		&=\left(                 
		\begin{array}{cc}   
			(L_{11}-L_{21})\lambda_{+}^{-j-1}	& (L_{11}-L_{21})\lambda_{+}^{-j-1}\\  
			(L_{11}+L_{21})\lambda_{+}^{-j-1}& (L_{11}+L_{21})\lambda_{+}^{-j-1} \\  
		\end{array}
		\right).
	\end{align*}
	Then under the thermodynamic limit
	\begin{align*}
		\left(
		\begin{array}{cc}   
			\hat{\mathcal{S}}^{fg,\partial_{\Lambda_{1}}} & \hat{\mathcal{S}}^{fg,\partial_{\Lambda_{3}}}\\  
			\hat{\mathcal{S}}^{fg,\partial_{\Lambda_{2}}}& \hat{\mathcal{S}}^{fg,\partial_{\Lambda_{4}}} \\  
		\end{array}
		\right)
		&=\left(                 
		\begin{array}{cc}   
			(L_{11}-L_{21}+M_{11}-M_{21})\lambda_{+}^{-j-1}	& (L_{11}-L_{21}+M_{11}-M_{21})\lambda_{+}^{-j-1}\\  
			(L_{11}+L_{21}+M_{11}+M_{21})\lambda_{+}^{-j-1}& (L_{11}+L_{21}+M_{11}+M_{21})\lambda_{+}^{-j-1} \\  
		\end{array}
		\right).
	\end{align*}
	For the second term $\tilde{\mathcal{S}}_{N}^{fg,\partial_{\Lambda_{k}}}$, denote
	\begin{align*}
		J_{k}:=	\|fg\|^{2}_{L^{2}(\{\sigma\},\mu_{\partial_{\Lambda_{k}}})}=&\frac{1}{Z_{\partial_{\Lambda_{k}}}}\sum_{\{\sigma_{x}=\pm 1,x\in \Lambda\}}f^{2}(\sigma_{i})g^{2}(\sigma_{j})\exp\{K\sigma_{0}\sigma_{1}+K\sum\limits_{x\in \Lambda }\sigma_{x}\sigma_{x+1}\}\\
		=&\frac{1}{Z_{\partial_{\Lambda_{k}}}}
		\langle\sigma_{0}|P^{i-1} C^{4} P^{j-i-2}C^{3}P^{N-j} | \sigma_{N+1}\rangle,\ k=1,2,3,4.
	\end{align*}
	which means $\tilde{\mathcal{S}}_{N}^{fg,\partial_{\Lambda_{k}}}=J_{k}\log J_{k}$. By the same way, we  obtain
	\begin{align}\label{3.14}
		\lim_{N \to \infty}\left(         
		\begin{array}{cc}   
			J_{1} & J_{3}\\  
			J_{2}& J_{4} \\  
		\end{array}
		\right) 
		&=\left(                 
		\begin{array}{cc}   
			(R_{11}-R_{21})\lambda_{+}^{-j-1}	& (R_{11}-R_{21})\lambda_{+}^{-j-1}\\  
			(R_{11}+R_{21})\lambda_{+}^{-j-1}& (R_{11}+R_{21})\lambda_{+}^{-j-1} \\  
		\end{array}
		\right).
	\end{align}
	Let
	\begin{align*}
		\hat{R}_{1}=(R_{11}-R_{21})\lambda_{+}^{-j-1}\log\left((R_{11}-R_{21})\lambda_{+}^{-j-1}\right), 
		\hat{R}_{2}=		(R_{11}+R_{21})\lambda_{+}^{-j-1}\log\left((R_{11}+R_{21})\lambda_{+}^{-j-1}\right),
	\end{align*}
	then
	\begin{align*}
		\left(
		\begin{array}{cc}   
			\tilde{\mathcal{S}}^{fg,\partial_{\Lambda_{1}}} & \tilde{\mathcal{S}}^{fg,\partial_{\Lambda_{3}}}\\  
			\tilde{\mathcal{S}}^{fg,\partial_{\Lambda_{2}}}& \tilde{\mathcal{S}}^{fg,\partial_{\Lambda_{4}}} \\  
		\end{array}
		\right)
		&=\left(                 
		\begin{array}{cc}   
			\hat{R}_{1}	& \hat{R}_{1}\\  
			\hat{R}_{2}&\hat{R}_{2} 
		\end{array}
		\right).
	\end{align*}
	
\end{proof}

\section{Rewrite the RG procedure in other principles}\label{sec:4}
From the physical point of view, there are different frames to construct the RG procedure according to the underlying observables we choose. Among these quantities, the most important and general one is the connected $n$-point correlation function; see \cite{Zinn2014book}. Concretely, the main idea is  to construct a new Hamiltonian  $\mathcal{H}_{\lambda}$ such that correlation function satisfies the following scaling formula when the real positive dilatation parameter ${\lambda}$ of spatial points diverges
\begin{itemize}
	\item  $\mathcal{H}_{\lambda = 1}=\mathcal{H}$,
	\item  For any $x_{1},\cdots, x_{n}\in \mathds{R}$, the correlation functions $\mathcal{G}(x_{1},\cdots, x_{n})$ satisfy the following scaling formula
	\begin{align}\label{2.12}
		\mathcal{G}_{\lambda}(x_{1},\cdots, x_{n})-{Z^{-n/2}(\lambda)}\mathcal{G}(\lambda x_{1},\cdots, \lambda x_{n})=\mathcal{R}_{\lambda}(x_{1},\cdots, x_{n}),
	\end{align}
	with
	$$\mathcal{G}_{\lambda=1}(x_{1},\cdots, x_{n})\equiv \mathcal{G}( x_{1},\cdots, x_{n}), \mathcal{R}_{\lambda=1}(x_{1},\cdots, x_{n})=0, Z(1)=1,$$
	and the RG equation~\eqref{2.12}~involves term $\mathcal{R}_{\lambda}$ need to satisfy 
	$$\mathcal{R}_{\lambda}(x_{1},\cdots, x_{n})\rightarrow 0\ as \ \lambda\rightarrow\infty, $$ faster than any power of $\lambda$.
\end{itemize}
The Hamiltonian  $\mathcal{H}_{\lambda}$  is also called the \emph{effective Hamiltonian} at scale $\lambda$. This section we discuss the scaling property of one dimensional Ising model along the RG procedure. Within this framework, the \emph{RG} equation~\eqref{2.12}~mentioned above is essential. In section \ref{sec:4.1},  we rewrite the "coarse-graining" procedure based on $n$-point correlation functions and prove that the \emph{RGT}~\eqref{2.11}~satisfies scaling formula~\eqref{2.12}~under $\lambda=2$ in each iteration. The main results of section \ref{sec:4.2} is to give a compatible principle of \emph{RG} procedure in our new observables words. Specifically, the \emph{RGT} ~\eqref{2.11}~that obtained by the "coarse-graining" procedure coincides with the RG equation~\eqref{2.12}~and observables RG equation we proposed in section \ref{sec:4.2} below.
\subsection{The "coarse-graining" procedure is consistent with the \emph{RG} equations. }\label{sec:4.1}
In the case of two point correlation functions, the \emph{RG} equation~\eqref{2.12}~can be written as
\begin{align}\label{2.16}
	\mathcal{G}_{\lambda}(x_{1}, x_{2})-Z^{-1}(\lambda)\mathcal{G}(\lambda x_{1}, \lambda x_{2})=\mathcal{R}_{\lambda}(x_{1}, x_{2}).
\end{align}
The main goal in this section is to check if the \emph{RGT}~\eqref{2.11}~satisfies the \emph{RG} equation~\eqref{2.16}~through a few iterations. Notice that, in the "coarse-graining" procedure, dilatation parameter $\lambda=2$. We choose the function $Z(\lambda)=1$, and then for any $x_{1},x_{2}$ 
\begin{align*}
	\mathcal{G}_{\lambda=2}(x_{1}, x_{2})-\mathcal{G}(2 x_{1}, 2 x_{2})=\mathcal{R}_{0}(x_{1}, x_{2}).
\end{align*}
For a given $n\in\mathds{N}$, iterating equation~\eqref{2.12}~, and due to the arbitrariness of $x_{1},x_{2}$, we obtain
\begin{align}
	&\mathcal{G}_{\lambda=2}(2^{n-1}x_{1}, 2^{n-1}x_{2})-\mathcal{G}(2^{n} x_{1}, 2^{n} x_{2}):=\mathcal{R}_{1}(x_{1}, x_{2}),\nonumber\\
	& \qquad\qquad  \vdots\\
	&\mathcal{G}_{\lambda=2^{n}}(x_{1}, x_{2})-\mathcal{G}_{\lambda=2^{n-1}}(2 x_{1}, 2 x_{2}):=\mathcal{R}_{n}(x_{1}, x_{2}).\nonumber
\end{align}
Hence
\begin{align}\label{4.18}
	&\mathcal{G}_{\lambda=2^{n}}(x_{1}, x_{2})-\mathcal{G}(2^{n} x_{1}, 2^{n} x_{2})=\sum_{k=1}^{n}\mathcal{R}_{k}(x_{1}, x_{2}):=\mathcal{S}_{n}(x_{1}, x_{2}).
\end{align}
\begin{theorem}\label{thm:4.1}
	For the Ising chain without external  field ($h=0$) on  one dimentional lattice $\mathds{Z}$, Hamiltonians $\mathcal{H}_{\lambda} $ is determined by the \emph{RGT}~\eqref{2.11}~, $\mathcal{G}_{\lambda}(x_{1}, x_{2})$ are the two point correlation functions corresponding to $\mathcal{H}_{\lambda}$. Then the remainder terms $\mathcal{S}_{n}(x_{1}, x_{2})$ decay to 0 exponentially.
\end{theorem}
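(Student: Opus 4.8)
The plan is to reduce the statement to an explicit scalar recursion for the coupling constant along the RG flow and then to analyze the resulting one–dimensional dynamical system. First I would specialize the correlation formula~\eqref{2.10} to $h=0$: there $\sin^{2}(2\phi)=1$ and $\lambda_-/\lambda_+=(e^{K}-e^{-K})/(e^{K}+e^{-K})=\tanh K$, so the two point function of the Ising chain with coupling $K$ is $\mathcal{G}(x_1,x_2)=(\tanh K)^{|x_1-x_2|}$. Writing $m:=|x_1-x_2|$ and letting $K^{(n)}$ be the coupling after $n$ iterations of the \emph{RGT}~\eqref{2.11} (so $K^{(0)}=K$ and $e^{2K^{(n+1)}}=\cosh K^{(n)}$), the effective Hamiltonian $\mathcal{H}_{\lambda=2^{n}}$ is again a nearest–neighbor Ising Hamiltonian with coupling $K^{(n)}$. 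Hence, recalling the choice $Z(\lambda)=1$ and the definition~\eqref{4.18},
\begin{align*}
\mathcal{S}_{n}(x_1,x_2)=\mathcal{G}_{\lambda=2^{n}}(x_1,x_2)-\mathcal{G}(2^{n}x_1,2^{n}x_2)=r_n^{\,m}-r_0^{\,2^{n}m},
\end{align*}
where $r_n:=\tanh K^{(n)}$. The whole theorem thus reduces to controlling the two scalar sequences $r_n^{\,m}$ and $r_0^{\,2^{n}m}$.

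The second term is immediate: for any finite $K>0$ (i.e.\ away from the critical point $K=\infty$) we have $0<r_0=\tanh K<1$, so $r_0^{\,2^{n}m}$ decays doubly exponentially in $n$, in particular faster than any fixed exponential. The work lies in the first term, which I would handle by studying the map induced on $r=\tanh K$. A short computation from $e^{2K'}=\cosh K$ gives
\begin{align*}
r'=\tanh K'=\frac{\cosh K-1}{\cosh K+1}=\frac{1-\sqrt{1-r^{2}}}{1+\sqrt{1-r^{2}}}.
\end{align*}
Setting $u:=\sqrt{1-r^{2}}\in(0,1)$ one finds $r'/r=\sqrt{1-u}\,(1+u)^{-3/2}<1$ for all $r\in(0,1)$, so $(r_n)$ is strictly decreasing and bounded below by $0$; its only fixed point in $[0,1)$ is $r=0$, hence $r_n\downarrow 0$ and $K^{(n)}\to 0$.

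Next I would upgrade this to a geometric rate. Expanding near the attractive fixed point gives $r'=r^{2}/4+O(r^{4})$, so the RG map is locally quadratic at $0$; concretely, once $r_n\le 1/2$ the explicit expression for $r'/r$ above shows $r_{n+1}\le\tfrac12 r_n$. Since $r_n\downarrow 0$ there is an $N_0$ beyond which $r_n\le 1/2$, whence $r_n\le r_{N_0}2^{-(n-N_0)}$; absorbing the finitely many initial terms into a constant yields $r_n\le C\,2^{-n}$, and therefore $r_n^{\,m}\le C^{m}2^{-mn}$ decays exponentially in $n$ (iterating the quadratic bound in fact shows $r_n^{\,m}$ decays doubly exponentially). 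Combining with the bound on the second term and the triangle inequality $|\mathcal{S}_{n}|\le r_n^{\,m}+r_0^{\,2^{n}m}$ gives the claimed exponential decay of $\mathcal{S}_n$ in the number of iterations $n$; the degenerate case $x_1=x_2$ (where $m=0$ and $\mathcal{S}_n\equiv 0$) is trivial.

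The main obstacle is precisely this rate statement for $r_n$: establishing not merely that $K^{(n)}\to 0$ but that the approach to the trivial fixed point is at least geometric. This is the linearization and local–contraction analysis of the RG map near $r=0$, and it is the only place where the precise form of~\eqref{2.11} enters — and then only through the qualitative facts that $r=0$ is a fixed point and that the induced map is a strict contraction on $(0,1)$. Everything else is bookkeeping built on the closed form of $\mathcal{G}$ at $h=0$.
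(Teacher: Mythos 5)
Your proposal is correct, and it shares the paper's overall skeleton: both arguments reduce $\mathcal{S}_{n}(x_{1},x_{2})$ via the closed form of the $h=0$ correlation function to the difference $(\tanh K_{n})^{m}-(\tanh K)^{2^{n}m}$ with $m=|x_{2}-x_{1}|$, dispose of the second term using $0<\tanh K<1$ for finite $K$, and then show the effective coupling is (at least) halved by each RG step, so the first term is exponentially small in $n$. The difference lies in how the halving is obtained. The paper works directly with $K_{n}$: from $K_{n}=\tfrac{1}{2}\log\bigl(\tfrac{e^{K_{n-1}}+e^{-K_{n-1}}}{2}\bigr)=\tfrac{K_{n-1}}{2}+\tfrac{1}{2}\log\tfrac{1+e^{-2K_{n-1}}}{2}\le\tfrac{K_{n-1}}{2}$ it gets the global bound $K_{n}\le K2^{-n}$ in one line, valid from $n=0$ with an explicit constant and no preliminary convergence argument, and then converts this to a bound on $(\tanh K_{n})^{m}$ by elementary estimates. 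You instead pass to $r=\tanh K$, derive the closed-form map $r'=\bigl(1-\sqrt{1-r^{2}}\bigr)/\bigl(1+\sqrt{1-r^{2}}\bigr)$, and run a dynamical-systems argument: strict decrease via $r'/r=\sqrt{1-u}\,(1+u)^{-3/2}<1$, uniqueness of the fixed point $r=0$ in $[0,1)$, convergence by monotonicity, and eventual halving once $r_{n}\le 1/2$ (your estimate there checks out: $u\ge\sqrt{3}/2$ gives $r'/r<1/2$), absorbing finitely many initial steps into a constant. Your route is heavier than strictly necessary --- the paper's global inequality makes the $N_{0}$/constant-absorption step superfluous --- but it is sound and self-contained, it correctly isolates that only the qualitative fixed-point structure of the \emph{RGT}~\eqref{2.11} is used, and your observation that the map is locally quadratic ($r'\le r^{2}$, since $r'=r^{2}/(1+u)^{2}$) in fact yields doubly exponential decay of $r_{n}^{m}$, which is stronger than what the paper's linear bound $K_{n}\le K2^{-n}$ directly provides; both proofs tacitly assume $K<\infty$, i.e.\ the system is off the zero-temperature critical point, consistent with the theorem's intent.
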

\begin{proof}
	From results listed in section \ref{sec:2}, due to $h=0$, for any $x_{1},x_{2}\in\mathds{Z}$
	\begin{align*}
		\mathcal{G}(x_{1},x_{2})&=\left (\frac{\lambda_{-}}{\lambda_{+}}\right)^{|x_{2}-x_{1}|},
	\end{align*}
	where $\lambda_{+}=e^{K}+e^{-K}$, and $\lambda_{-}=e^{K}-e^{-K}$. Let $\lambda_{+}(n)$ and $\lambda_{-}(n)$ represent the eigenvalues of transfer matrixes corresponding to Hamiltonians $\mathcal{H}_{\lambda=2^{n}}$ with coupling constants $K_{n}$, then		\begin{align}\label{4.19}
		e^{-2K_{n}}=\frac{2}{e^{K_{n-1}}+e^{-K_{n-1}}}.
	\end{align}
	Obviously,
	\begin{align*}
		\mathcal{G}_{\lambda=2^{n}}(x_{1},x_{2})&=\left (\frac{\lambda_{-}(n)}{\lambda_{+}(n)}\right)^{|x_{2}-x_{1}|}=\left (\frac{e^{K_{n}}-e^{-K_{n}}}{e^{K_{n}}+e^{-K_{n}}}\right)^{|x_{2}-x_{1}|},
	\end{align*}
	and
	\begin{align*}
		\mathcal{G}(2^{n}x_{1},2^{n}x_{2})&=\left (\frac{\lambda_{-}}{\lambda_{+}}\right)^{2^{n}|x_{2}-x_{1}|}=\left (\frac{e^{K}-e^{-K}}{e^{K}+e^{-K}}\right)^{2^{n}|x_{2}-x_{1}|}.
	\end{align*}
	It is clear that the second term $\mathcal{G}(2^{n}x_{1},2^{n}x_{2})$ in~\eqref{4.18}~ tends to 0 exponentially as $n\rightarrow \infty$. For the first term $\mathcal{G}_{\lambda=2^{n}}(x_{1},x_{2})$, because $K\geq 0$, then for all $n\geq 1$, from~\eqref{4.19}~
	\begin{align}
		0 &\leq K_{n}=\frac{1}{2}\log(\frac{e^{K_{n-1}}+e^{-K_{n-1}}}{2})\leq \frac{K_{n-1}}{2}+\frac{1}{2}\log(\frac{1+e^{-2K_{n-1}}}{2})\leq \frac{K_{n-1}}{2} \leq \frac{K_{n-2}}{2^{2}}\cdots\leq \frac{K}{2^{n}}.
	\end{align}
	Furthermore,
	\begin{align*}
		0 \leq  &\mathcal{G}_{\lambda=2^{n}}(x_{1},x_{2})=\left (\frac{e^{K_{n}}-e^{-K_{n}}}{e^{K_{n}}+e^{-K_{n}}}\right)^{|x_{2}-x_{1}|}
		\leq\left (\frac{e^{K_{n}}-e^{-K_{n}}}{2}\right)^{|x_{2}-x_{1}|}=e^{-K_{n}|x_{2}-x_{1}|}\left(\frac{e^{2K_{n}}-1}{2}\right)^{|x_{2}-x_{1}|}\\
		\leq& e^{2K}K_{n}^{|x_{2}-x_{1}|}
		\leq e^{2K}\frac{K^{|x_{2}-x_{1}|}}{2^{n{|x_{2}-x_{1}|}}}\ \rightarrow \ 0, \ as \ n\ \rightarrow \ \infty.
	\end{align*}
	Consequently, the exponential decay of the first term is obtained. Since both of them  decay to 0 exponentially, so dose their difference.
\end{proof}
\subsection{The "coarse-graining" procedure is consistent with the  observable \emph{RG} equations. }\label{sec:4.2}
The results in last section turn out that the remainder terms $\mathcal{S}_{n}(x_{1}, x_{2})$ decay to 0 exponentially in the frame of correlation function. As mentioned in Remark \ref{re3.2}, the behavior of two point correlation functions are similar to two point observables. Now, we attempt to reconstruct the \emph{RG} equation~\eqref{4.18}~by observables.

In the case of two point observables, the \emph{RG} equation can be written as
\begin{align}\label{4.21}
	\mathcal{S}^{fg}_{\lambda}(x_{1}, x_{2})-Z^{-1}(\lambda)\mathcal{S}^{fg}(\lambda x_{1}, \lambda x_{2})=\mathcal{O}_{\lambda}(x_{1}, x_{2}),
\end{align}
According to~\eqref{2.13}~, $\mathcal{S}^{f}$  can be divided into two parts $\hat{\mathcal{S}}^{f}$ and $\tilde{\mathcal{S}}^{f}$. Hence, the \emph{RG} equation corresponding to  the observables will be respectively written as
\begin{subequations}
	\begin{align}\label{eq:14}
		\hat{\mathcal{S}}^{fg}_{\lambda}(x_{1}, x_{2})-Z^{-1}(\lambda)\hat{\mathcal{S}}^{fg}(\lambda x_{1}, \lambda x_{2})=\hat{\mathcal{O}}_{\lambda}(x_{1}, x_{2}),\\
		\tilde{\mathcal{S}}^{fg}_{\lambda}(x_{1}, x_{2})-Z^{-1}(\lambda)\tilde{\mathcal{S}}^{fg}(\lambda x_{1}, \lambda x_{2})=\tilde{\mathcal{O}}_{\lambda}(x_{1}, x_{2}).
	\end{align}
\end{subequations}
We will adopt the similar statement as in correlation function.
Let factor $Z(\lambda)=1$, so for any $x_{1},x_{2}$ 
\begin{subequations}\label{eq:15}
	\begin{align}
		\mathcal{S}^{fg}_{\lambda=2}(x_{1}, x_{2})-\mathcal{S}^{fg}(2 x_{1}, 2 x_{2})=\mathcal{O}_{0}(x_{1}, x_{2}),\\
		\hat{\mathcal{S}}^{fg}_{\lambda=2}(x_{1}, x_{2})-\hat{\mathcal{S}}^{fg}(2 x_{1}, 2 x_{2})=\hat{\mathcal{O}}_{0}(x_{1}, x_{2}),\\
		\tilde{\mathcal{S}}^{fg}_{\lambda=2}(x_{1}, x_{2})-\tilde{\mathcal{S}}^{fg}(2 x_{1}, 2 x_{2})=\tilde{\mathcal{O}}_{0}(x_{1}, x_{2}).
	\end{align}
\end{subequations}
Then for any  given $n\in\mathds{N}$, iterating equation (\ref{eq:15}), and due to the arbitrariness of $x_{1},x_{2}$, the remainder terms $\tilde{\mathcal{O}}_{n}(x_{1}, x_{2})$ and $\hat{\mathcal{O}}_{n}(x_{1}, x_{2})$ are obtained and finally 
$\mathcal{O}_{n}(x_{1}, x_{2})=\hat{\mathcal{O}}_{n}(x_{1}, x_{2})-\tilde{\mathcal{O}}_{n}(x_{1}, x_{2})$.
\begin{theorem}\label{thm:4.2}
	The assumption is same as Theorem \ref{thm:4.1},  $\mathcal{S}^{fg}_{\lambda}(x_{1}, x_{2})$ are the two point observables corresponding to $\mathcal{H}_{\lambda}$. Then the remainder terms $\hat{\mathcal{O}}_{n}(x_{1}, x_{2})$ and $\tilde{\mathcal{O}}_{n}(x_{1}, x_{2})$ both decay to 0 exponentially.
\end{theorem}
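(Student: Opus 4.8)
The plan is to mirror the strategy of Theorem \ref{thm:4.1}, reducing the iterated remainders to a single closed-form difference and then exploiting the explicit formulas of Theorem \ref{thm:3.1}. First I would record that, exactly as the telescoping identity \eqref{4.18} expresses $\mathcal{S}_n$ as $\mathcal{G}_{\lambda=2^n}(x_1,x_2)-\mathcal{G}(2^n x_1,2^n x_2)$, iterating the split relations \eqref{eq:15} with $Z(\lambda)\equiv 1$ and $\lambda=2$ collapses the sums to
\begin{align*}
	\hat{\mathcal{O}}_n(x_1,x_2)&=\hat{\mathcal{S}}^{fg}_{\lambda=2^n}(x_1,x_2)-\hat{\mathcal{S}}^{fg}(2^n x_1,2^n x_2),\\
	\tilde{\mathcal{O}}_n(x_1,x_2)&=\tilde{\mathcal{S}}^{fg}_{\lambda=2^n}(x_1,x_2)-\tilde{\mathcal{S}}^{fg}(2^n x_1,2^n x_2).
\end{align*}
Thus it suffices to estimate these two differences as $n\to\infty$.

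The decisive observation is that the constants $A,B,C^2_{12},C^4_{12},\bar{\vartriangle}$ produced in Theorem \ref{thm:3.1} depend only on $f$ and $g$, not on the coupling $K$; under the RG flow only the ratio $\lambda_-/\lambda_+=\tanh K$ is renormalized. Writing $m=|x_2-x_1|$, the effective observable uses $\tanh K_n$ while the dilated one uses $(\tanh K)^{2^n}$. For the first term the additive constant $A/4$ cancels, leaving
\begin{align*}
	\hat{\mathcal{O}}_n(x_1,x_2)=\frac{B}{4}\Big[(\tanh K_n)^{m}-(\tanh K)^{2^n m}\Big].
\end{align*}
Both summands vanish exponentially: the bound $0\le K_n\le K/2^n$ established inside the proof of Theorem \ref{thm:4.1} gives $(\tanh K_n)^m\le (K/2^n)^m$, while $0\le \tanh K<1$ forces $(\tanh K)^{2^n m}\to 0$ super-exponentially. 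Hence $\hat{\mathcal{O}}_n$ decays exponentially.

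For the second term I would treat it through the single-variable function $\varphi(t)=t\log t$. With $u_n=\tfrac{1}{4}C^4_{12}C^2_{12}+\tfrac{1}{4}\bar{\vartriangle}(\tanh K_n)^{m}$ and $v_n=\tfrac{1}{4}C^4_{12}C^2_{12}+\tfrac{1}{4}\bar{\vartriangle}(\tanh K)^{2^n m}$ we have $\tilde{\mathcal{O}}_n=\varphi(u_n)-\varphi(v_n)$, and both $u_n,v_n$ converge to $L_0:=\tfrac{1}{4}C^4_{12}C^2_{12}$. In the non-degenerate case $f,g\not\equiv 0$, so that $L_0>0$, the point $L_0$ is bounded away from the singularity of $\varphi'$, and the mean value theorem yields $\tilde{\mathcal{O}}_n=(\log\xi_n+1)(u_n-v_n)$ for some $\xi_n$ between $u_n$ and $v_n$. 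Since $\xi_n\to L_0>0$ the factor $\log\xi_n+1$ stays bounded, whereas $u_n-v_n=\tfrac{1}{4}\bar{\vartriangle}[(\tanh K_n)^{m}-(\tanh K)^{2^n m}]$ decays exponentially by the estimate just obtained. Therefore $\tilde{\mathcal{O}}_n$ also decays exponentially.

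The main obstacle I anticipate is precisely the nonlinearity $t\log t$ in $\tilde{\mathcal{S}}^{fg}$: unlike the purely geometric correlation function, the logarithmic self-term could in principle amplify small perturbations if its argument approached $0$. The resolution is to keep the argument uniformly bounded below by $L_0>0$, which is guaranteed as soon as $f$ and $g$ are not identically zero (so that the $L^2$-norm appearing in Theorem \ref{thm:3.1} is strictly positive); controlling $\varphi'$ on a neighborhood of $L_0$ then transfers the exponential decay of the difference of arguments directly to $\tilde{\mathcal{O}}_n$.
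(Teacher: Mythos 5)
Your proposal is correct, and its overall skeleton coincides with the paper's: both reduce $\hat{\mathcal{O}}_{n}$ and $\tilde{\mathcal{O}}_{n}$ to the single closed-form differences obtained from Theorem \ref{thm:3.1}, and both drive the decay from the bound $0\leq K_{n}\leq K/2^{n}$ established inside Theorem \ref{thm:4.1}. The genuine divergence is in how the logarithmic term is controlled. The paper expands $\tilde{\mathcal{O}}_{n}$ algebraically into three pieces $S_{1}+S_{2}+S_{3}$, where $S_{2},S_{3}$ carry the exponentially small prefactors directly and the log-ratio term $S_{1}$ is bounded via $\log(1+x)\leq x$ for $n$ large; you instead apply the mean value theorem to $\varphi(t)=t\log t$, writing $\tilde{\mathcal{O}}_{n}=(\log\xi_{n}+1)(u_{n}-v_{n})$ with $\xi_{n}$ trapped near $L_{0}=\tfrac{1}{4}C^{4}_{12}C^{2}_{12}$. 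Your route is more streamlined (one estimate instead of three) and has the merit of making explicit the non-degeneracy hypothesis $L_{0}>0$, i.e.\ $f,g\not\equiv 0$, which the paper uses only implicitly when it divides by $\tfrac{C^{4}_{12}C^{2}_{12}}{4}+\tfrac{\bar{\vartriangle}}{4}\left(\tfrac{\lambda_{-}}{\lambda_{+}}\right)^{2^{n}|x_{2}-x_{1}|}$ inside $S_{1}$ (and indeed already needs for $\tilde{\mathcal{S}}^{fg}$ to be well defined). Two further small points in your favor: the identification $\lambda_{-}/\lambda_{+}=\tanh K$ together with $\tanh x\leq x$ gives $(\tanh K_{n})^{m}\leq (K/2^{n})^{m}$, which is tighter and cleaner than the paper's bound carrying the extra factor $e^{2K}$; and your explicit telescoping reduction of the iterated relations \eqref{eq:15} to a single difference matches what the paper asserts without writing out. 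The only caveat is that the mean value theorem needs $u_{n},v_{n}>0$ for all $n$ in the range considered, which holds for $n$ large since both converge to $L_{0}>0$ even when $\bar{\vartriangle}<0$ --- you should state that the estimate is for $n$ sufficiently large, exactly as the paper does for its bound on $S_{1}$.
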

\begin{proof}
	From theorem \ref{thm:3.1}, for any $x_{1},x_{2}\in\mathds{Z}$
	\begin{align*}
		\hat{\mathcal{S}}^{fg}=\frac{A}{4}+\frac{B}{4} \left (\frac{\lambda_{-}}{\lambda_{+}}\right)^{|x_{2}-x_{1}|}, \tilde{\mathcal{S}}^{fg}=\left(\frac{C^{4}_{12}C^{2}_{12}}{4}+\frac{\bar{\vartriangle}}{4}\left (\frac{\lambda_{-}}{\lambda_{+}}\right)^{|x_{2}-x_{1}|}\right)\log\left(\frac{C^{4}_{12}C^{2}_{12}}{4}+\frac{\bar{\vartriangle}}{4}\left (\frac{\lambda_{-}}{\lambda_{+}}\right)^{|x_{2}-x_{1}|}\right),
	\end{align*}
	where $\lambda_{+}=e^{K}+e^{-K}$ and $\lambda_{-}=e^{K}-e^{-K}$. Then
	\begin{align*}			\hat{\mathcal{O}}_{n}(x_{1}, x_{2})
		&=\hat{\mathcal{S}}^{fg}_{\lambda=2^{n}}(x_{1}, x_{2})-\hat{\mathcal{S}}^{fg}(2^{n} x_{1}, 2^{n} x_{2})=\frac{A}{4}+\frac{B}{4} \left (\frac{\lambda_{-}(n)}{\lambda_{+}(n)}\right)^{|x_{2}-x_{1}|}-\frac{A}{4}-\frac{B}{4} \left (\frac{\lambda_{-}}{\lambda_{+}}\right)^{2^{n}|x_{2}-x_{1}|}\\
		&=\frac{B}{4} \left \{\left(\frac{\lambda_{-}(n)}{\lambda_{+}(n)}\right)^{|x_{2}-x_{1}|}- \left (\frac{\lambda_{-}}{\lambda_{+}}\right)^{2^{n}|x_{2}-x_{1}|}\right\},
	\end{align*}
	and according to the results we get in Theorem \ref{thm:4.1}
	\begin{align*}
		0 \leq K_{n}\leq \frac{K_{n-1}}{2} \leq \frac{K_{n-2}}{2^{2}}\cdots\leq \frac{K}{2^{n}},
	\end{align*}
	and the estimate
	\begin{align*}
		\left (\frac{\lambda_{-}(n)}{\lambda_{+}(n)}\right)^{|x_{2}-x_{1}|}
		=&\left (\frac{e^{K_{n}}-e^{-K_{n}}}{e^{K_{n}}+e^{-K_{n}}}\right)^{|x_{2}-x_{1}|}
		\leq e^{2K}\frac{K^{|x_{2}-x_{1}|}}{2^{n{|x_{2}-x_{1}|}}}\ \rightarrow \ 0, \ as \ n\ \rightarrow \ \infty.
	\end{align*}
	It is easy to conclude that the remainder terms $\hat{\mathcal{O}}_{n}(x_{1}, x_{2})$ decay to 0 exponentially.	
	But for $\tilde{\mathcal{O}}_{n}(x_{1} x_{2})$
	\begin{align*}
		\tilde{\mathcal{O}}_{n}(x_{1} x_{2})&=\tilde{\mathcal{S}}^{fg}_{\lambda=2^{n}}(x_{1}, x_{2})-\tilde{\mathcal{S}}^{fg}(2^{n} x_{1}, 2^{n} x_{2})\\
		&=\left(\frac{C^{4}_{12}C^{2}_{12}}{4}+\frac{\bar{\vartriangle}}{4}\left (\frac{\lambda_{-}(n)}{\lambda_{+}(n)}\right)^{|x_{2}-x_{1}|}\right)\log\left(\frac{C^{4}_{12}C^{2}_{12}}{4}+\frac{\bar{\vartriangle}}{4}\left (\frac{\lambda_{-}(n)}{\lambda_{+}(n)}\right)^{|x_{2}-x_{1}|}\right)\\
		&-\left(\frac{C^{4}_{12}C^{2}_{12}}{4}+\frac{\bar{\vartriangle}}{4}\left (\frac{\lambda_{-}}{\lambda_{+}}\right)^{2^{n}|x_{2}-x_{1}|}\right)\log\left(\frac{C^{4}_{12}C^{2}_{12}}{4}+\frac{\bar{\vartriangle}}{4}\left (\frac{\lambda_{-}}{\lambda_{+}}\right)^{2^{n}|x_{2}-x_{1}|}\right)\\
		&=\frac{C^{4}_{12}C^{2}_{12}}{4}\log\frac{\frac{C^{4}_{12}C^{2}_{12}}{4}+\frac{\bar{\vartriangle}}{4}\left (\frac{\lambda_{-}(n)}{\lambda_{+}(n)}\right)^{|x_{2}-x_{1}|}}{\frac{C^{4}_{12}C^{2}_{12}}{4}+\frac{\bar{\vartriangle}}{4}\left (\frac{\lambda_{-}}{\lambda_{+}}\right)^{2^{n}|x_{2}-x_{1}|}}+\frac{\bar{\vartriangle}}{4}\left (\frac{\lambda_{-}(n)}{\lambda_{+}(n)}\right)^{|x_{2}-x_{1}|}\log\left(\frac{C^{4}_{12}C^{2}_{12}}{4}+\frac{\bar{\vartriangle}}{4}\left (\frac{\lambda_{-}(n)}{\lambda_{+}(n)}\right)^{|x_{2}-x_{1}|}\right)\\
		&-\frac{\bar{\vartriangle}}{4}\left (\frac{\lambda_{-}}{\lambda_{+}}\right)^{2^{n}|x_{2}-x_{1}|}\log\left(\frac{C^{4}_{12}C^{2}_{12}}{4}+\frac{\bar{\vartriangle}}{4}\left (\frac{\lambda_{-}}{\lambda_{+}}\right)^{2^{n}|x_{2}-x_{1}|}\right):= S_{1}+S_{2}+S_{3},
	\end{align*}
	where $S_{2}$ and $S_{3}$ satisfy the exponential decay obviously. For $S_{1}$ 
	\begin{align*}		S_{1}=\frac{C^{4}_{12}C^{2}_{12}}{4}\log\frac{\frac{C^{4}_{12}C^{2}_{12}}{4}+\frac{\bar{\vartriangle}}{4}\left (\frac{\lambda_{-}(n)}{\lambda_{+}(n)}\right)^{|x_{2}-x_{1}|}}{\frac{C^{4}_{12}C^{2}_{12}}{4}+\frac{\bar{\vartriangle}}{4}\left (\frac{\lambda_{-}}{\lambda_{+}}\right)^{2^{n}|x_{2}-x_{1}|}}=&\frac{C^{4}_{12}C^{2}_{12}}{4}\log\left(1+\frac{\bar{\vartriangle}}{4}\frac{\left (\frac{\lambda_{-}(n)}{\lambda_{+}(n)}\right)^{|x_{2}-x_{1}|}-\left (\frac{\lambda_{-}}{\lambda_{+}}\right)^{2^{n}|x_{2}-x_{1}|}}{\frac{C^{4}_{12}C^{2}_{12}}{4}+\frac{\bar{\vartriangle}}{4}\left (\frac{\lambda_{-}}{\lambda_{+}}\right)^{2^{n}|x_{2}-x_{1}|}}\right)\\
		\leq&\frac{C^{4}_{12}C^{2}_{12}}{4}\frac{\bar{\vartriangle}}{4}\frac{\left (\frac{\lambda_{-}(n)}{\lambda_{+}(n)}\right)^{|x_{2}-x_{1}|}-\left (\frac{\lambda_{-}}{\lambda_{+}}\right)^{2^{n}|x_{2}-x_{1}|}}{\frac{C^{4}_{12}C^{2}_{12}}{4}+\frac{\bar{\vartriangle}}{4}\left (\frac{\lambda_{-}}{\lambda_{+}}\right)^{2^{n}|x_{2}-x_{1}|}},
	\end{align*}
	for $n$ sufficiently large. Clearly, the right hand side of the inequality tends to 0 exponentially as $n\rightarrow \infty$. Since the remainder terms $\hat{\mathcal{O}}_{n}(x_{1}, x_{2})$ and $\tilde{\mathcal{O}}_{n}(x_{1}, x_{2})$ both tend to 0 exponentially, the  exponential decay of $\mathcal{O}_{n}(x_{1}, x_{2})$ is obtained.
\end{proof}
\section{The observables of stochastic dynamic}\label{sec:5}
Now, we begin to consider the Cauchy problem of the discrete stochastic dynamic~\eqref{5.11}~with initial distribution $\mathds{P}_{0}$ and system parameter $K_{\lambda(t)}$
\begin{equation}\label{5.11}
	\begin{cases}
		\phi(x,t+1)&=\mathcal{P}(\phi(x,t),\xi(x,t))\\
		\phi(\cdot,0)&=\mathbb{P}_{0}.
	\end{cases}
\end{equation}
where $\mathbb{P}_{0}$ is the infinite-volume Gibbs measure associated to the Ising Hamiltonian with nearest  neighbor interaction. Specifically, for any finite subset $\Lambda\subset \mathds{Z}$,  we decompose the measure $\mathbb{P}_{0}$ into the marginal $\mathbb{P}_{0}(\phi(x,0)=\sigma_{x},x\in \Lambda^{c})$ and  the conditional measure $\mathbb{P}_{0}(\phi(x,0)=\sigma_{x},x\in \Lambda|\phi(y,0)=\sigma_{y},y\in \Lambda^{c})$. Furthermore,  the conditional measure satisfies 
\begin{align}\label{eq:5.2}
	\mathbb{P}_{0}(\phi(x,0)=\sigma_{x},x\in \Lambda|\phi(y,0)=\sigma_{y},y\in \Lambda^{c})=\frac{1}{Z_{\partial_{\Lambda}}}\exp\{K\sum\limits_{x\in\Lambda:x \sim y}\sigma_{x}\sigma_{y}\},
\end{align}
which is also called Dobrushin Lanford Ruelle(DLR) equations; see \cite{Menzdocument2014} for details.

\subsection{The estimate of two point  observables. }\label{sec:5.1}
This section we devote to investigating the behavior of two point observables induced by the stochastic difference equation~\eqref{5.11}~. The trick is to replace the coupling constant $K$ by $K_{\lambda(t)}$ which is associsted with time $t$ and the dilatation factor $\lambda(t)$, and satisfies
\begin{enumerate}
	\item  $\lambda(t) \rightarrow \infty$, as $t\rightarrow \infty$;
	\item $K_{\lambda(t)}\rightarrow K^{*}$, as $\lambda(t)\rightarrow \infty$;
	\item 
	$\mathcal{S}^{fg}_{\lambda(t)}(x_{1}, x_{2})-Z^{-1}(\lambda(t))\mathcal{S}^{fg}(\lambda(t) x_{1}, \lambda(t) x_{2})=\mathcal{O}_{\lambda(t)}(x_{1}, x_{2})\rightarrow 0$, as $t\rightarrow \infty.$
\end{enumerate}
Generally, in the vicinity of fixed point, the two point observables convergences as $K_{\lambda(t)}\rightarrow0$, that is to say, $K^{*}=0$ is a stable fixed point.
Now, we begin to consider the Cauchy problem of the discrete stochastic dynamic~\eqref{5.11}~with initial distribution ~\eqref{eq:5.2}~ and system parameter $K_{\lambda(t)}$
\begin{equation*}
	\begin{cases}
		\phi(x,t+1)&=\mathcal{P}(\phi(x,t),\xi(x,t)),\\
		\phi(\cdot,0)&=\mathbb{P}_{0}.
	\end{cases}
\end{equation*}
Obviously, thanks to the structure of iteration, for any time $s< t$ and spatial coordinates $x_{i}, x_{j}\in\mathbb{Z}$, $\phi(x_{i},s)$ is independent to $\xi(x_{j},t)$. Without loss of generality, we assume that $x_{1}, x_{2}\in\mathbb{Z}$ satisfies $\left| x_{2}-x_{1}\right| \geq 3$. Let vectors 
$\left(\sigma_{1} ,  \sigma_{2} ,\sigma_{3} , \sigma_{4} , \sigma_{5}, \sigma_{6}  \right) $ represent the six dimensional spin variables, where $\sigma_{i}=\pm 1, i=1,\dots,6$  are spins. For any $t>0$, the probability of each spin variable is
\begin{align*}
	\mathbb{P}\big(\phi(x_{1}-1,t)=\sigma_{1},\phi(x_{1},t)=\sigma_{2},\dots,\phi(x_{2},t)=\sigma_{5},\phi(x_{2}+1,t)= \sigma_{6}\big).
\end{align*}
For convenience, denote that
\begin{equation*}
	\left\{
	\begin{aligned}
		&A_{11}(t)={ K_{\lambda(t)}(\phi(x_{1}-1,t)+\phi(x_{1}+1,t))+(1-\gamma)\phi(x_{1},t)+\xi(x_{1},t)\geq 0},\\
		&A_{12}(t)={ K_{\lambda(t)}(\phi(x_{2}-1,t)+\phi(x_{2}+1,t))+(1-\gamma)\phi(x_{2},t)+\xi(x_{2},t)\geq 0},\\
		&A_{21}(t)={ K_{\lambda(t)}(\phi(x_{1}-1,t)+\phi(x_{1}+1,t))+(1-\gamma)\phi(x_{1},t)+\xi(x_{1},t)\leq 0},\\
		&A_{22}(t)={ K_{\lambda(t)}(\phi(x_{2}-1,t)+\phi(x_{2}+1,t))+(1-\gamma)\phi(x_{2},t)+\xi(x_{2},t)\geq 0},
	\end{aligned}
	\right.
\end{equation*}
and
\begin{equation*}
	\left\{
	\begin{aligned}
		&A_{31}(t)={ K_{\lambda(t)}(\phi(x_{1}-1,t)+\phi(x_{1}+1,t))+(1-\gamma)\phi(x_{1},t)+\xi(x_{1},t)\geq 0},\\
		&A_{32}(t)={ K_{\lambda(t)}(\phi(x_{2}-1,t)+\phi(x_{2}+1,t))+(1-\gamma)\phi(x_{2},t)+\xi(x_{2},t)\leq 0},\\
		&A_{41}(t)={ K_{\lambda(t)}(\phi(x_{1}-1,t)+\phi(x_{1}+1,t))+(1-\gamma)\phi(x_{1},t)+\xi(x_{1},t)\leq 0},\\
		&A_{42}(t)={ K_{\lambda(t)}(\phi(x_{2}-1,t)+\phi(x_{2}+1,t))+(1-\gamma)\phi(x_{2},t)+\xi(x_{2},t)\leq 0}.
	\end{aligned}
	\right.
\end{equation*}
Undergoing the structure rearrangement of the six dimensional spin variables in a binary sort as follows
\begin{align}\label{5.1.2}
	&\mathbb{P}_{t}=\left(                 
	\begin{array}{cccccc}
		\\
		P^{1}_{t}\\
		P^{2}_{t}\\	
		P^{3}_{t}\\
		P^{4}_{t}\\
		P^{5}_{t}\\
		P^{6}_{t}\\	
		P^{7}_{t}\\
		P^{8}_{t}\\
		\vdots \\
		\vdots\\
		P^{64}_{t}
	\end{array}
	\right)\leftrightarrow            
	\left(                 
	\begin{array}{rrrrrrrr}
		\sigma_{1}&\sigma_{2}&\sigma_{3}&\sigma_{4}&\sigma_{5}&\sigma_{6}\\
		1 &1 &1 &1 &1 &1\\
		-1&1 &1 &1 &1 &1\\	
		1 &-1 &1 &1 &1 &1\\
		-1&-1 &1 &1 &1 &1\\
		1 &1 &-1 &1 &1 &1\\
		-1&1 &-1 &1 &1 &1\\	
		1 &-1 &-1 &1 &1 &1\\
		-1&-1 &-1 &1 &1 &1\\
		\qquad &\vdots &\qquad &\vdots\\
		\qquad &\vdots &\qquad &\vdots\\
		-1&-1 &-1 &-1 &-1 &-1
	\end{array}
	\right),            
\end{align}	
where $\{P_{t}^{i}\}_{i=1,\cdots,64}$ are the probability values corresponding to these six dimensional spins. By definition
\begin{align}\label{5.1.3}
	\mathbb{P}(\phi  (x_{1},t+1)=&1  ,\phi(x_{2},t+1)= 1)\nonumber\\
	=&
	\mathbb{P}\big( A_{11}A_{12}\big)=\mathbb{P}\big(2K_{\lambda(t)}+1-\gamma+\xi(x_{1},t)\geq 0,2K_{\lambda(t)}+1-\gamma+\xi(x_{2},t)\geq 0 \big) \cdot P^{1}_{t}\nonumber\\
	&\vdots\nonumber\\
	+&\mathbb{P}\big(-2K_{\lambda(t)}-(1-\gamma)+\xi(x_{1},t)\geq 0,-2K_{\lambda(t)}-(1-\gamma)+\xi(x_{2},t)\geq 0 \big) \cdot P^{64}_{t}.
\end{align}
Owing to the independence of Gaussian random variables, we know that  
\begin{align*}
	\mathbb{P}\big(x+\xi(x_{1},t)\geq 0,y+\xi(x_{2},t)\geq 0 \big)&=\mathbb{P}\big(x+\xi(x_{1},t)\geq 0\big)\cdot \mathbb{P}\big(y+\xi(x_{2},t)\geq 0 \big)=\Phi(x)\Phi(y),
\end{align*}
and 
\begin{align*}
	\mathbb{P}\big(x+\xi(x_{1},t)\leq 0,y+\xi(x_{2},t)\geq 0 \big)&=\Phi(-x)\Phi(y),\\
	\mathbb{P}\big(x+\xi(x_{1},t)\geq 0,y+\xi(x_{2},t)\leq 0 \big)&=\Phi(x)\Phi(-y),\\
	\mathbb{P}\big(x+\xi(x_{1},t)\leq 0,y+\xi(x_{2},t)\leq 0 \big)&=\Phi(-x)\Phi(-y),
\end{align*}
where $\Phi(x)$ is the Gauss integration
\begin{align*}
	\Phi(x)=\frac{1}{\sqrt{2\pi }}\int_{-\infty}^{x}e^{-\frac{z^{2}}{2}}dz.
\end{align*}
Furthermore, the new spin variables  $\left(S_{2} ,  S_{5}\right)$ are introduced
\begin{align*}
	\left(                 
	\begin{array}{l}
		\qquad \qquad   \ \text{New Spin}\\
		\phi(x_{1},t+1)= 1, \phi(x_{2},t+1)= 1\\  
		\phi(x_{1},t+1)=-1, \phi(x_{2},t+1)= 1\\  
		\phi(x_{1},t+1)= 1, \phi(x_{2},t+1)=-1\\
		\phi(x_{1},t+1)=-1, \phi(x_{2},t+1)=-1
	\end{array}
	\right)            
	=\left(                 
	\begin{array}{rr}
		S_{2} & S_{5} \\  
		1&1 \\  
		-1&1\\
		1&-1\\
		-1&-1
	\end{array}
	\right).           
\end{align*}	 
Similarly, probability of the other three spin variables can also be calculated by equation~\eqref{5.1.3}~, then  
\begin{align}\label{5.1.4}
	\mathbb{A}_{t+1}  := \left(                 
	\begin{array}{l}
		\mathbb{P}\left(\phi(x_{1},t+1)= 1,\phi(x_{2},t+1)= 1\right)  \\  
		\mathbb{P}\left(\phi(x_{1},t+1)=-1,\phi(x_{2},t+1)= 1\right) \\  
		\mathbb{P}\left(\phi(x_{1},t+1)= 1,\phi(x_{2},t+1)=-1\right)\\
		\mathbb{P}\left(\phi(x_{1},t+1)=-1,\phi(x_{2},t+1)=-1\right)
	\end{array}
	\right)
	=\left(                 
	\begin{array}{l}
		\mathbb{P}\left(A_{11}A_{12}\right)  \\  
		\mathbb{P}\left(A_{21}A_{22}\right) \\  
		\mathbb{P}\left(A_{31}A_{32}\right)\\
		\mathbb{P}\left(A_{41}A_{42}\right)
	\end{array}
	\right)            
	:= \Theta_{K_{\lambda(t)}}\mathbb{P}_{t},
\end{align}	
where $\Theta_{K_{\lambda(t)}}$ is a 4$\times$64 transition probability matrix, and $\mathbb{A}_{t+1}$ is the joint distribution of two point new spin variables at time $t+1$,  $\mathbb{P}_{t}$ is the joint distribution of original six dimensional spins at time $t$.
According to the expansion \eqref{5.1.3}~, the elements of $\Theta_{K_{\lambda(t)}}$ can be expressed explicitly
\begin{align}\label{5.1.5}
	\Theta_{K_{\lambda(t)}}=
	\left(                 
	\begin{array}{lll}
		\dots &   \Phi(K_{\lambda(t)}(\sigma_{1}+\sigma_{3})+(1-\gamma)\sigma_{2})\Phi(K_{\lambda(t)}(\sigma_{4}+\sigma_{6})+(1-\gamma)\sigma_{5}) & \dots\\
		\dots& \Phi(-K_{\lambda(t)}(\sigma_{1}+\sigma_{3})-(1-\gamma)\sigma_{2})\Phi(K_{\lambda(t)}(\sigma_{4}+\sigma_{6})+(1-\gamma)\sigma_{5}) &\dots\\
		\dots & \Phi(K_{\lambda(t)}(\sigma_{1}+\sigma_{3})+(1-\gamma)\sigma_{2})\Phi(-K_{\lambda(t)}(\sigma_{4}+\sigma_{6})-(1-\gamma)\sigma_{5}) & \dots\\
		\dots & \Phi(-K_{\lambda(t)}(\sigma_{1}+\sigma_{3})-(1-\gamma)\sigma_{2})\Phi(-K_{\lambda(t)}(\sigma_{4}+\sigma_{6})-(1-\gamma)\sigma_{5}) & \dots
	\end{array}
	\right) ,           
\end{align}	
and the values of spins $\left(\sigma_{1} , \sigma_{2} ,\sigma_{3} , \sigma_{4} , \sigma_{5}, \sigma_{6}  \right) $ in $i$-th  column of matrix $\Theta_{K_{\lambda(t)}}$  corresponds to the $i$-th row of  $\mathbb{P}_{t}$ in~\eqref{5.1.2}~. By the definition of observables, using symbols $a,b,c,d$ introduced in Theorem \ref{thm:3.1}
\begin{align}\label{5.1.6}
	\hat{\mathcal{S}}^{fg}_{\lambda(t)}(x_{1}, x_{2})=&\mathbb{E}_{K_{\lambda(t)}}\left[ f^{2}(\phi(x_{1},t+1))g^{2}(\phi(x_{2},t+1))\log\left(f^{2}(\phi(x_{1},t+1))g^{2}(\phi(x_{2},t+1))\right)\right]\nonumber\\
	=&a\mathbb{P}\left(A_{11}A_{12}\right) 
	+b\mathbb{P}\left(A_{21}A_{22}\right) +c\mathbb{P}\left(A_{31}A_{32}\right) +d\mathbb{P}\left(A_{41}A_{42}\right), 
\end{align}
and
$$\tilde{\mathcal{S}}^{fg}_{\lambda(t)}(x_{1}, x_{2})=\|fg\|^{2}_{L^{2}(K_{\lambda(t)})}\log (\|fg\|^{2}_{L^{2}(K_{\lambda(t)})}),$$
where
\begin{align}\label{5.1.7}
	\|fg\|^{2}_{L^{2}(K_{\lambda(t)})}:=& \mathbb{E}_{K_{\lambda(t)}}\left[ f^{2}(\phi(x_{1},t+1))g^{2}(\phi(x_{2},t+1))\right]\nonumber\\
	=&\tilde{a}\mathbb{P}\left(A_{11}A_{12}\right) 	+\tilde{b}\mathbb{P}\left(A_{21}A_{22}\right) +\tilde{c}\mathbb{P}\left(A_{31}A_{32}\right) +\tilde{d}\mathbb{P}\left(A_{41}A_{42}\right), 
\end{align}
and
$$\tilde{a}=f^{2}(1)g^{2}(1),\tilde{b}=f^{2}(-1)g^{2}(1),\tilde{c}=f^{2}(1)g^{2}(-1),\tilde{d}=f^{2}(-1)g^{2}(-1).$$
On the one hand, owing to~\eqref{5.1.4}~and ~\eqref{5.1.6}~
\begin{align}\label{5.1.8}
	\hat{E}_{t+1}:= \hat{\mathcal{S}}^{fg}_{\lambda(t)}(x_{1}, x_{2})
	=&\left(                 
	\begin{array}{llll}   
		a & b & c & d   
	\end{array}
	\right)
	\mathbb{A}_{t+1}    =\left(                 
	\begin{array}{llll}   
		a & b & c & d   
	\end{array}
	\right)\Theta_{K_{\lambda(t)}}\mathbb{P}_{t}   .          
\end{align}	
On the other hand, according to equation~\eqref{5.1.6}~, the value $\hat{E}_{t+1}$  is totally determined by spins $\sigma_{2}$ , $\sigma_{5}$ at $x_{1},x_{2}$ in~\eqref{5.1.2}~but replaced time $t$ by $t+1$, and is independent to spins at $x_{1}-1,x_{1}+1,x_{2}-1,x_{2}+1$, which means $\sigma_{1}$, $\sigma_{3}$ , $\sigma_{4}$ , $\sigma_{6}$. Hence
\begin{align*}
	\hat{E}_{t+1}
	=\left(                 
	\begin{array}{llllllllllllllllllll}   
		a & a & b & b & \dots & c & c & d & d  & a & a & b & b & \dots & c & c & d & d 
	\end{array}
	\right)\mathbb{P}_{t+1}.
\end{align*}	
Denote that 		
\begin{align*}
	\vec{A}:=\vec{A}^{0}
	=\left(                 
	\begin{array}{llllllllllllllllllll}   
		a & a & b & b & \dots & c & c & d & d  & a & a & b & b & \dots & c & c & d & d 
	\end{array}
	\right).
\end{align*}
Generally, let
\begin{align}\label{5.1.9}
	\vec{A}^{i}
	=\left(                 
	\begin{array}{llllllllllllllllllll}   
		a_{i} & a _{i}& b_{i} & b_{i} & \dots & c_{i} & c_{i} & d_{i} & d_{i}  & a_{i} & a_{i} & b_{i} & b_{i} & \dots & c_{i} & c_{i} & d_{i} & d_{i} 
	\end{array}
	\right), i\geq 1.
\end{align}	
that is to say
\begin{align}\label{5.1.10}
	\hat{E}_{t+1}
	=\vec{A}\cdot \mathbb{P}_{t+1},
\end{align}	
where $\vec{A}$ is a $1\times 64$ vector, but  includes only four different elements $a,b,c,d$. The relationship between positions of $a,b,c,d$ in  $\vec{A}$ and the values of spins  $\sigma_{2}$ , $\sigma_{5}$ in~\eqref{5.1.2}~are
\begin{align}\label{5.1.11}		a\leftrightarrow \left(\sigma_{2}=1 , \sigma_{5}=1\right), b\leftrightarrow \left(\sigma_{2}=-1 , \sigma_{5}=1\right), c\leftrightarrow \left(\sigma_{2}=1 , \sigma_{5}=-1\right), d\leftrightarrow \left(\sigma_{2}=-1 , \sigma_{5}=-1\right).
\end{align}
In order to get the estimate of observables, the following computation of Gaussian integration and transition probability matrix are necessary.
\begin{lemma}\label{lem:5.1}
	Let $\tilde{\Phi}$ be the matrix with following form
	\begin{align}\label{5.1.12}
		\tilde{\Phi}=
		\left(                 
		\begin{array}{llll}
			\Phi(x)\Phi(x) & \Phi(-x)\Phi(x) &  \Phi(x)\Phi(-x) & \Phi(-x)\Phi(-x)\\
			\Phi(-x)\Phi(x) & \Phi(x)\Phi(x) &  \Phi(-x)\Phi(-x) & \Phi(x)\Phi(-x)\\
			\Phi(x)\Phi(-x) & \Phi(-x)\Phi(-x) & \Phi(x)\Phi(x) & \Phi(-x)\Phi(x))\\
			\Phi(-x)\Phi(-x) & \Phi(x)\Phi(-x) & \Phi(-x)\Phi(x) & \Phi(x)\Phi(x)
		\end{array}
		\right),
	\end{align}	
	where $\Phi(x)$ is the Gauss integration defined as above, for $x\in \mathbb{R}$. Then we claim that the spectral radius of  $\tilde{\Phi}$ equals to 1, and algebraic multiplicity of the maximal eigenvalue is 1. Furthermore, the matrix sequence $\tilde{\Phi}^{s+1}$ converges as follows
	\begin{align*}
		\tilde{\Phi}^{s+1}\rightarrow\xi_{1}\xi_{1}^{T}
		=\dfrac{1}{4}\left(                 
		\begin{array}{llll}   
			1&1&1&1\\
			1&1&1&1\\
			1&1&1&1\\
			1&1&1&1
		\end{array}
		\right):=\frac{1}{4}\mathds{1}_{4}  \ as \ s\rightarrow \infty,
	\end{align*} 
	where $\xi_{1}$ is the eigenvector corresponding to the maximal eigenvalue and
	\begin{align*}
		\xi_{1}^{T}
		=\frac{1}{2}\left(                 
		\begin{array}{llll}   
			1&1&1&1
		\end{array}
		\right).
	\end{align*} 
\end{lemma}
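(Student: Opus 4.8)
The plan is to recognize that $\tilde{\Phi}$ carries a Kronecker (tensor) product structure, which collapses the entire problem onto a $2\times 2$ eigenvalue computation. First I would abbreviate $p := \Phi(x)$ and $q := \Phi(-x)$, and record the elementary identity $p+q=1$, which follows from the symmetry $\Phi(-x)=1-\Phi(x)$ of the standard normal distribution function. Using $\Phi(-x)\Phi(x)=\Phi(x)\Phi(-x)$ to rewrite the entries of~\eqref{5.1.12}, one checks by direct block comparison that
$$\tilde{\Phi} = M\otimes M, \qquad M := \begin{pmatrix} p & q \\ q & p \end{pmatrix},$$
since the four $2\times 2$ blocks $pM, qM, qM, pM$ of $M\otimes M$ reproduce exactly the four quadrants of~\eqref{5.1.12}.

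Next I would diagonalize the $2\times 2$ matrix $M$. Because $M$ is symmetric with constant row sums $p+q=1$, its eigenpairs are immediate: eigenvalue $1$ with unit eigenvector $\tfrac{1}{\sqrt 2}(1,1)^T$, and eigenvalue $p-q=2\Phi(x)-1$ with unit eigenvector $\tfrac{1}{\sqrt 2}(1,-1)^T$. By the standard spectral rule for Kronecker products, the eigenvalues of $\tilde{\Phi}$ are the four products $\{1,\,p-q,\,p-q,\,(p-q)^2\}$, with eigenvectors the corresponding tensor products. In particular the eigenvector attached to the eigenvalue $1$ is $\tfrac{1}{\sqrt 2}(1,1)^T\otimes \tfrac{1}{\sqrt 2}(1,1)^T = \tfrac{1}{2}(1,1,1,1)^T = \xi_1$, matching the claimed $\xi_1$.

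For the spectral radius and multiplicity assertions, the decisive point is that for every finite $x$ one has $0<\Phi(x)<1$, whence $|p-q|=|2\Phi(x)-1|<1$. Thus $1$ is the unique eigenvalue of modulus $1$, which simultaneously establishes that the spectral radius equals $1$ and that the algebraic multiplicity of the maximal eigenvalue is $1$, the three remaining eigenvalues $p-q,\,p-q,\,(p-q)^2$ being strictly smaller in modulus. Finally, since $M$, and therefore $\tilde{\Phi}=M\otimes M$, is real symmetric, it possesses an orthonormal eigenbasis, and its spectral decomposition yields $\tilde{\Phi}^{s+1}=\sum_k \mu_k^{\,s+1} v_k v_k^T$. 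Letting $s\to\infty$, every term with $|\mu_k|<1$ vanishes and only the eigenvalue-$1$ contribution survives, so $\tilde{\Phi}^{s+1}\to \xi_1\xi_1^T=\tfrac{1}{4}\mathds{1}_4$ (noting $\|\xi_1\|^2=1$, so $\xi_1\xi_1^T$ is the orthogonal projection onto $\mathrm{span}(\xi_1)$), as desired.

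The main obstacle is essentially bookkeeping: correctly matching the entries of~\eqref{5.1.12} to the block form of $M\otimes M$, since a single transposed $p\leftrightarrow q$ would destroy the tensor factorization. Once that factorization is verified, the rest is routine linear algebra. Alternatively one could bypass the Kronecker observation and diagonalize $\tilde{\Phi}$ directly by exploiting the fact that each of its rows is a permutation of $(p^2,pq,pq,q^2)$, but the tensor route is cleaner and renders the eigenstructure transparent.
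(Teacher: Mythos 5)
Your proof is correct, but it takes a genuinely different route from the paper's. The paper works directly on the $4\times 4$ matrix: it uses the row-sum identity $\Phi(x)+\Phi(-x)=1$ together with the Gershgorin circle theorem to bound the spectral radius by $1$ and exhibit $1$ as an eigenvalue, then solves $\det(\tilde{\Phi}-\lambda I)=0$ explicitly to obtain $\lambda_{1}=1$, $\lambda_{2}=\lambda_{3}=\Phi(x)-\Phi(-x)$, $\lambda_{4}=(\Phi(x)-\Phi(-x))^{2}$, and finally diagonalizes by an orthogonal matrix to pass to the limit. You instead observe the factorization $\tilde{\Phi}=M\otimes M$ with $M=\begin{pmatrix} p & q\\ q & p\end{pmatrix}$, $p=\Phi(x)$, $q=\Phi(-x)$; checking the four quadrants of $M\otimes M$ against the entries of~\eqref{5.1.12} confirms this identity (the scalar commutativity $pq=qp$ absorbs the apparent transpositions, which was indeed the one place a slip could occur). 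The Kronecker spectral rule then reproduces exactly the paper's eigenvalue list $\{1,\,p-q,\,p-q,\,(p-q)^{2}\}$ and the eigenvector $\xi_{1}=\tfrac{1}{\sqrt2}(1,1)^{T}\otimes\tfrac{1}{\sqrt2}(1,1)^{T}=\tfrac12(1,1,1,1)^{T}$, and since $0<\Phi(x)<1$ gives $|p-q|<1$, the simplicity of the top eigenvalue and the convergence $\tilde{\Phi}^{s+1}\to\xi_{1}\xi_{1}^{T}=\tfrac14\mathds{1}_{4}$ follow from the spectral decomposition of the real symmetric matrix exactly as in the paper. What your approach buys: it trades the quartic characteristic polynomial for a $2\times 2$ computation, explains structurally why the spectrum consists of products of the $2\times2$ eigenvalues, and scales immediately to the $2^{m}$-dimensional transfer determined matrix of Lemma~\ref{lem:6.1}, which is precisely the $m$-fold Kronecker power $M^{\otimes m}$ (with $x=1-\gamma$) --- whereas the paper must handle that case by a separate Gershgorin-plus-rank argument to establish simplicity of the eigenvalue $1$. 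What the paper's approach buys is self-containedness: it requires no structural observation, only elementary linear algebra on a small matrix.
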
	
\begin{proof}
	Obviously, for Gaussian integration, we know that $\Phi(x)=1-\Phi(-x)$. Then the row sum of  $\tilde{\Phi}$ is 
	$$\Phi(x)\Phi(x) +\Phi(-x)\Phi(x) +  \Phi(x)\Phi(-x) + \Phi(-x)\Phi(-x)=(\Phi(x) +\Phi(-x))^{2}=1.$$
	Because $\forall x\in \mathbb{R}$, $\Phi(x)>0$, then entries of $\tilde{\Phi}$ are all positive. By Gersgorin Circle Theorem in linear algebra, the spectral radius of $\tilde{\Phi}$ less than or equal to 1, and 1 is a eigenvalue of $\tilde{\Phi}$ with unit eigenvector $\xi_{1}$. Let the other eigenvalues be $\lambda_{2},\lambda_{3},\lambda_{4}$ which corresponding to unit eigenvectors $\xi_{2},\xi_{3},\xi_{4}$. Through solving the equation
	$$\det\left(\tilde{\Phi}-\lambda I \right)=0,$$
	we can get explicitly
	$$\lambda_{1}=1,\lambda_{2}=\lambda_{3}=\Phi(x)-\Phi(-x),\lambda_{4}=(\Phi(x)-\Phi(-x))^{2}.$$ 
	Then $\tilde{\Phi}$ is diagonalizable and $O^{T}\tilde{\Phi}O=diag\{1,\lambda_{2},\lambda_{3},\lambda_{4}\}$, where  $O=(\xi_{1},\xi_{2},\xi_{3},\xi_{4})$ is an orthogonal matrix. Obviously, for $x\in \mathbb{R}$, $|\lambda_{2}|=|\lambda_{3}|=|\Phi(x)-\Phi(-x)|<1$ and $\lambda_{4}=(\Phi(x)-\Phi(-x))^{2}<1$. Hence, $\tilde{\Phi}$ satisfies		\begin{align*}
		\tilde{\Phi}^{s+1}\rightarrow &
		\left(                 
		\begin{array}{llll} 
			\xi_{1}&\xi_{2}&\xi_{3}&\xi_{4}
		\end{array}
		\right)
		diag\{1,0,0,0\}\left(                 
		\begin{array}{llll} 
			\xi_{1}&\xi_{2}&\xi_{3}&\xi_{4}
		\end{array}
		\right)^{T}
		=\xi_{1}\xi_{1}^{T}
		=\frac{1}{4}
		\mathds{1}_{4},
	\end{align*} 
	as $ s\rightarrow \infty$,	which completes the lemma \ref{lem:5.1}.
\end{proof}
\begin{lemma}\label{lem:5.2}
	Let $\Phi(x)$, $x\in \mathbb{R}$ be the Gaussian integration defined as above
	\begin{align*}
		\Phi(x)=\frac{1}{\sqrt{2\pi }}\int_{-\infty}^{x}e^{-\frac{z^{2}}{2}}dz.
	\end{align*}
	Consider function $h(x,y)=\Phi(x)\Phi(y)$, Then the following uniform estimate can be obtained 
	\begin{align*}
		|h(x_{1},x_{2})-h(y_{1},y_{2})|\leq  C(|x_{1}-y_{1}|+|x_{2}-y_{2}|).
	\end{align*} 
\end{lemma}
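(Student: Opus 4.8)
The plan is to exploit two elementary features of the Gaussian integral $\Phi$, namely that it is uniformly bounded and globally Lipschitz, and then to reduce the two-variable estimate to two one-variable estimates by a telescoping add-and-subtract argument. This converts the product bound into a sum of single-factor increments, each of which is controlled by the Lipschitz constant of $\Phi$.

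First I would record the two facts about $\Phi$. Since $\Phi(x)=\frac{1}{\sqrt{2\pi}}\int_{-\infty}^{x}e^{-z^{2}/2}\,dz$, we have $0\le \Phi(x)\le 1$ for every $x\in\mathbb{R}$, and $\Phi$ is differentiable with $\Phi'(x)=\frac{1}{\sqrt{2\pi}}e^{-x^{2}/2}$. Because $|\Phi'(x)|\le \frac{1}{\sqrt{2\pi}}$ uniformly in $x$, the mean value theorem gives that $\Phi$ is globally Lipschitz with constant $\frac{1}{\sqrt{2\pi}}$, i.e. $|\Phi(a)-\Phi(b)|\le \frac{1}{\sqrt{2\pi}}|a-b|$ for all $a,b$. (Equivalently, and without invoking the mean value theorem, one may write $|\Phi(a)-\Phi(b)|=\frac{1}{\sqrt{2\pi}}\bigl|\int_{b}^{a}e^{-z^{2}/2}\,dz\bigr|\le \frac{1}{\sqrt{2\pi}}|a-b|$.)

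Next I would split the difference by inserting the mixed term $\Phi(y_{1})\Phi(x_{2})$:
\begin{align*}
	h(x_{1},x_{2})-h(y_{1},y_{2})=\Phi(x_{2})\bigl(\Phi(x_{1})-\Phi(y_{1})\bigr)+\Phi(y_{1})\bigl(\Phi(x_{2})-\Phi(y_{2})\bigr).
\end{align*}
Applying the triangle inequality, bounding the outer factors $\Phi(x_{2})$ and $\Phi(y_{1})$ by $1$, and then invoking the Lipschitz estimate on each increment yields
\begin{align*}
	|h(x_{1},x_{2})-h(y_{1},y_{2})|\le |\Phi(x_{1})-\Phi(y_{1})|+|\Phi(x_{2})-\Phi(y_{2})|\le \frac{1}{\sqrt{2\pi}}\bigl(|x_{1}-y_{1}|+|x_{2}-y_{2}|\bigr),
\end{align*}
so the claim holds with the explicit constant $C=\frac{1}{\sqrt{2\pi}}$.

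There is essentially no obstacle in this argument; the only points requiring a word of care are that both outer factors in the split product are bounded by $1$ (immediate from $0\le\Phi\le 1$) and that the Lipschitz constant of $\Phi$ is finite (which follows from the uniform boundedness of the Gaussian density). Everything else is the triangle inequality, so the estimate is uniform in the four real arguments as asserted.
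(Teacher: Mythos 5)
Your proof is correct, and in fact supplies exactly the standard argument that the paper omits entirely (its proof of Lemma \ref{lem:5.2} reads only ``The proof is trivial''). The telescoping decomposition $\Phi(x_{1})\Phi(x_{2})-\Phi(y_{1})\Phi(y_{2})=\Phi(x_{2})\bigl(\Phi(x_{1})-\Phi(y_{1})\bigr)+\Phi(y_{1})\bigl(\Phi(x_{2})-\Phi(y_{2})\bigr)$, combined with $0\le\Phi\le 1$ and the Lipschitz bound $|\Phi(a)-\Phi(b)|\le\frac{1}{\sqrt{2\pi}}|a-b|$ from the bounded Gaussian density, is the intended reasoning, and your explicit constant $C=\frac{1}{\sqrt{2\pi}}$ is a welcome sharpening.
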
	
\begin{proof}
	The proof is trivial.
\end{proof}
\begin{theorem}\label{th:5.3}
	Considering the stochastic dynamic~\eqref{5.11}~ with system parameter $K_{\lambda(t)}$ and initial distribution  $\mathbb{P}_{0}$. The two point observables $\mathcal{S}^{fg}_{\lambda(t)}(x_{1}, x_{2})$ and  $\mathcal{S}^{fg}(\lambda(t) x_{1}, \lambda(t) x_{2})$ satisfies
	\begin{align*}
		\mathcal{S}^{fg}_{\lambda(t)}(x_{1}, x_{2})&\rightarrow\frac{A}{4}-\frac{C^{4}_{12}C^{2}_{12}}{4}\log \left(\frac{C^{4}_{12}C^{2}_{12}}{4}\right),\\
		\mathcal{S}^{fg}(\lambda(t) x_{1}, \lambda(t) x_{2})&\rightarrow\frac{A}{4}-\frac{C^{4}_{12}C^{2}_{12}}{4}\log \left(\frac{C^{4}_{12}C^{2}_{12}}{4}\right),
	\end{align*}
	as $t\rightarrow \infty, \lambda(t)\rightarrow \infty$ and $K_{\lambda(t)}\rightarrow K^{*}=0$. 
\end{theorem}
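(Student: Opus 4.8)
The plan is to reduce the whole statement to a single convergence, namely that the joint law $\mathbb{A}_{t+1}$ of the pair $(\phi(x_1,t+1),\phi(x_2,t+1))$ tends to the uniform vector $(\tfrac14,\tfrac14,\tfrac14,\tfrac14)^{T}$, and then to read off both limits from \eqref{5.1.6}--\eqref{5.1.7}. Indeed, $\hat{\mathcal{S}}^{fg}_{\lambda(t)}=(a,b,c,d)\,\mathbb{A}_{t+1}$ is linear in $\mathbb{A}_{t+1}$, so $\mathbb{A}_{t+1}\to\tfrac14\mathds{1}$ forces $\hat{\mathcal{S}}^{fg}_{\lambda(t)}\to\tfrac{a+b+c+d}{4}=\tfrac{A}{4}$. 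Likewise $\|fg\|^{2}_{L^{2}(K_{\lambda(t)})}=(\tilde a,\tilde b,\tilde c,\tilde d)\,\mathbb{A}_{t+1}\to\tfrac{\tilde a+\tilde b+\tilde c+\tilde d}{4}=\tfrac{C^{4}_{12}C^{2}_{12}}{4}$, using $(f^{2}(1)+f^{2}(-1))(g^{2}(1)+g^{2}(-1))=C^{4}_{12}C^{2}_{12}$, and since $u\mapsto u\log u$ is continuous, $\tilde{\mathcal{S}}^{fg}_{\lambda(t)}\to\tfrac{C^{4}_{12}C^{2}_{12}}{4}\log\big(\tfrac{C^{4}_{12}C^{2}_{12}}{4}\big)$. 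Subtracting yields the asserted limit of $\mathcal{S}^{fg}_{\lambda(t)}=\hat{\mathcal{S}}^{fg}_{\lambda(t)}-\tilde{\mathcal{S}}^{fg}_{\lambda(t)}$.

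The core is therefore the convergence $\mathbb{A}_{t+1}\to\tfrac14\mathds{1}$. Here I would exploit the standing assumption $|x_2-x_1|\ge 3$: the neighbourhoods $\{x_1-1,x_1,x_1+1\}$ and $\{x_2-1,x_2,x_2+1\}$ are disjoint, so by independence of the Gaussian variables the one-step transition of the pair factorises into two single-site kernels, as already reflected in the product form of \eqref{5.1.5}. When $K_{\lambda(t)}=0$ the argument at $x_i$ reduces to $(1-\gamma)\sigma+\xi$, the single-site kernel depends only on the spin at $x_i$, and the induced map on the pair-marginal is exactly the matrix $\tilde\Phi$ of Lemma \ref{lem:5.1} with $x=1-\gamma$. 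For $K_{\lambda(t)}\neq 0$ the neighbour term shifts each Gaussian argument by at most $2K_{\lambda(t)}$, so Lemma \ref{lem:5.2} bounds the resulting change of every transition probability by $C\,K_{\lambda(t)}$, uniformly in the (unknown) neighbouring spins. Summing against $\mathbb{P}_t$ then collapses the six-site law onto the pair-marginal and produces the perturbed recursion $\mathbb{A}_{t+1}=\tilde\Phi\,\mathbb{A}_{t}+\vec e_t$ with $\|\vec e_t\|\le C\,K_{\lambda(t)}$.

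To finish I would feed in the spectral information of Lemma \ref{lem:5.1}: since $\tilde\Phi^{s+1}\to\tfrac14\mathds{1}_4$ with a simple dominant eigenvalue $1$, the matrix $\tilde\Phi$ fixes $\tfrac14\mathds{1}$ and contracts strictly on the complementary mean-zero subspace, with rate $\rho:=|\Phi(1-\gamma)-\Phi(\gamma-1)|<1$. Writing $\mathbb{A}_t=\tfrac14\mathds{1}+\mathbb{B}_t$, where $\mathbb{B}_t$ sums to zero, the recursion becomes $\mathbb{B}_{t+1}=\tilde\Phi\,\mathbb{B}_t+\vec e_t$, hence $\|\mathbb{B}_{t+1}\|\le\rho\,\|\mathbb{B}_t\|+C\,K_{\lambda(t)}$. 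Because $\rho<1$ and $K_{\lambda(t)}\to 0$, a standard estimate (split the horizon into the geometric decay of the initial deviation and the tail of the vanishing forcing, or a discrete Grönwall argument) gives $\mathbb{B}_t\to 0$, i.e. $\mathbb{A}_t\to\tfrac14\mathds{1}$. I expect this step to be the main obstacle: the pair-marginal dynamics is \emph{not} closed, as it genuinely depends on the full infinite-volume configuration through the couplings, so the reduction to a $4\times 4$ contraction is only approximate; the whole argument hinges on controlling that defect uniformly by $O(K_{\lambda(t)})$ via Lemma \ref{lem:5.2} and then absorbing it through the spectral gap of $\tilde\Phi$.

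For the second assertion, concerning the static observable $\mathcal{S}^{fg}(\lambda(t)x_1,\lambda(t)x_2)$, I would invoke Theorem \ref{thm:3.1} directly. There the separation is $\lambda(t)|x_2-x_1|\to\infty$ (and the running coupling $K_{\lambda(t)}\to 0$), so the factor $(\lambda_-/\lambda_+)^{\lambda(t)|x_2-x_1|}$ vanishes; consequently $\hat{\mathcal{S}}^{fg}\to\tfrac{A}{4}$ and, by continuity of $u\mapsto u\log u$ applied to $\|fg\|^{2}_{L^{2}}\to\tfrac{C^{4}_{12}C^{2}_{12}}{4}$, also $\tilde{\mathcal{S}}^{fg}\to\tfrac{C^{4}_{12}C^{2}_{12}}{4}\log\big(\tfrac{C^{4}_{12}C^{2}_{12}}{4}\big)$. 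Subtracting gives the same limit as for the dynamic observable, which is exactly the compatibility the theorem records.
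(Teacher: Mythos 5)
Your proposal is correct and follows the paper's skeleton for the dynamic half --- reduce everything to a four-state problem via \eqref{5.1.6}--\eqref{5.1.7}, use the spectral structure of the doubly stochastic matrix of Lemma \ref{lem:5.1} at $K=0$ (your $\tilde\Phi$ with $x=1-\gamma$ is exactly the paper's transfer determined matrix $\hat\Phi$), and control $K_{\lambda(t)}\neq 0$ through the Lipschitz bound of Lemma \ref{lem:5.2} --- but you implement it in dual form, and in a way that is actually tighter. The paper iterates the coefficient row vector, $(a_{t+1}\ b_{t+1}\ c_{t+1}\ d_{t+1})=(a\ b\ c\ d)\hat\Phi^{t+1}$ and $\hat E^{0}_{t+1}=\vec A^{t+1}\mathbb{P}_0$, an identity valid only for the reference dynamics with $K\equiv 0$ at \emph{every} step, and then bounds $\hat E_{t+1}-\hat E^{0}_{t+1}=(a\ b\ c\ d)\,\epsilon(K_{\lambda(t)})\,\mathbb{P}_t$ by Cauchy--Schwarz in the Frobenius norm; this is a one-step comparison in which both terms are evaluated against the same $\mathbb{P}_t$, although $\mathbb{P}_t$ itself was generated by the running couplings $K_{\lambda(s)}$, $s<t$. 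Your forward recursion $\mathbb{A}_{t+1}=\hat\Phi\,\mathbb{A}_t+\vec e_t$ with $\|\vec e_t\|\le C K_{\lambda(t)}$ (note $\vec e_t$ is automatically mean-zero, since $\mathbb{A}_{t+1}$ and $\hat\Phi\mathbb{A}_t$ both sum to one, so it stays in the contracting subspace), followed by $\|\mathbb{B}_{t+1}\|\le\rho\|\mathbb{B}_t\|+CK_{\lambda(t)}$ with $\rho=|\Phi(1-\gamma)-\Phi(\gamma-1)|<1$ and a discrete Gr\"onwall step, makes the accumulation of perturbations over all time steps explicit --- precisely the bookkeeping the paper's single-step Frobenius estimate leaves implicit --- at no extra cost.

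For the static observable $\mathcal{S}^{fg}(\lambda(t)x_1,\lambda(t)x_2)$ your route genuinely diverges from the paper's, and here there is one point to repair. You quote Theorem \ref{thm:3.1}, i.e.\ the periodic-boundary thermodynamic limit, and let $(\lambda_-/\lambda_+)^{\lambda(t)|x_2-x_1|}\to 0$. The paper instead decomposes $\mathbb{P}_0$ through the DLR property \eqref{eq:5.2} over the four boundary conditions $\partial_{\Lambda_k}$, applies Theorem \ref{thm:3.2}, and checks that the boundary-dependent terms $M_{21}\lambda_+^{-j-1}$ and $L_{21}\lambda_+^{-j-1}$ vanish as $j=\lambda(t)x_2\to\infty$, so that $\sum_{k}\mathbb{P}_0(\partial_{\Lambda_k})=1$ collapses the limit to $\frac{A}{4}$ and $\frac{C^{4}_{12}C^{2}_{12}}{4}\log\bigl(\frac{C^{4}_{12}C^{2}_{12}}{4}\bigr)$. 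Your shortcut silently identifies the DLR measure $\mathbb{P}_0$ with the periodic-boundary limit; this identification is true (uniqueness of the one-dimensional Gibbs state at $h=0$), but it is nowhere established in the paper --- Theorem \ref{thm:3.2} exists precisely to obtain boundary-condition independence by hand. Either invoke uniqueness explicitly or substitute the paper's boundary decomposition. A minor misstatement in the same place: the static limit is driven solely by the diverging separation at fixed initial coupling $K$, where $\lambda_-/\lambda_+=\tanh K<1$; your parenthetical ``running coupling $K_{\lambda(t)}\to 0$'' plays no role in that term.
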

\begin{proof}
	Let us start from the special case $K_{\lambda(t)}=0$ for  $t\geq0$ firstly, and denote that $\hat{E}_{t+1}$ as $\hat{E}^{0}_{t+1}$. Then according to ~\eqref{5.1.5}~, the transition probability matrix can be denoted by $\Phi^{0}$
	\begin{align*}
		\Phi^{0}=
		\left(                 
		\begin{array}{lll}
			\dots &   \Phi((1-\gamma)\sigma_{2})\Phi((1-\gamma)\sigma_{5}) & \dots\\
			\dots& \Phi(-(1-\gamma)\sigma_{2})\Phi((1-\gamma)\sigma_{5}) &\dots\\
			\dots & \Phi((1-\gamma)\sigma_{2})\Phi(-(1-\gamma)\sigma_{5}) & \dots\\
			\dots & \Phi(-(1-\gamma)\sigma_{2})\Phi(-(1-\gamma)\sigma_{5}) & \dots
		\end{array}
		\right).          
	\end{align*}	
	Noticed that the entries of $\Phi^{0}$ are determined by spins $\sigma_{2}$ and $\sigma_{5}$, then there are only four different columns in $\Phi^{0}$, even through it is a  $4\times 64$ matrix. The relationship between columns in $\Phi^{0}$ and spins $\sigma_{2},\sigma_{5}$  are
	\begin{align}\label{5.1.13} 
		\begin{array}{lllllll}
			\left(\sigma_{2}=1 , \sigma_{5}=1\right) \\
			\ \ \leftrightarrow (\Phi(1-\gamma)\Phi(1-\gamma) & \Phi(\gamma-1)\Phi(1-\gamma) &  \Phi(1-\gamma)\Phi(\gamma-1) & \Phi(\gamma-1)\Phi(\gamma-1))^{T},\\
			\left(\sigma_{2}=-1 , \sigma_{5}=1\right) \\
			\ \  \leftrightarrow (\Phi(\gamma-1)\Phi(1-\gamma) & \Phi(1-\gamma)\Phi(1-\gamma) &  \Phi(\gamma-1)\Phi(\gamma-1) & \Phi(1-\gamma)\Phi(\gamma-1))^{T},\\
			\left(\sigma_{2}=1 , \sigma_{5}=-1\right) \\ 
			\ \ \leftrightarrow (\Phi(1-\gamma)\Phi(\gamma-1) & \Phi(\gamma-1)\Phi(\gamma-1) & \Phi(1-\gamma)\Phi(1-\gamma) & \Phi(\gamma-1)\Phi(1-\gamma))^{T},\\
			\left(\sigma_{2}=-1 , \sigma_{5}=-1\right)\\ 
			\ \ \leftrightarrow (\Phi(\gamma-1)\Phi(\gamma-1) & \Phi(1-\gamma)\Phi(\gamma-1) & \Phi(\gamma-1)\Phi(1-\gamma) & \Phi(1-\gamma)\Phi(1-\gamma))^{T}.
		\end{array}
	\end{align}
	Combining ~\eqref{5.1.11}~and~\eqref{5.1.13}~, the columns in $\Phi^{0}$ and positions of $a,b,c,d$ in  $\vec{A}$ can be related by values of spins $\sigma_{2},\sigma_{5}$. More explicitly
	\begin{align}\label{4.12} 
		\begin{array}{lllllll}
			a \leftrightarrow \left(\sigma_{2}=1 , \sigma_{5}=1\right) \\
			\ \ \ \leftrightarrow (\Phi(1-\gamma)\Phi(1-\gamma) & \Phi(\gamma-1)\Phi(1-\gamma) &  \Phi(1-\gamma)\Phi(\gamma-1) & \Phi(\gamma-1)\Phi(\gamma-1))^{T},\\
			b \leftrightarrow \left(\sigma_{2}=-1 , \sigma_{5}=1\right) \\
			\ \ \ \leftrightarrow (\Phi(\gamma-1)\Phi(1-\gamma) & \Phi(1-\gamma)\Phi(1-\gamma) &  \Phi(\gamma-1)\Phi(\gamma-1) & \Phi(1-\gamma)\Phi(\gamma-1))^{T},\\
			c\leftrightarrow \left(\sigma_{2}=1 , \sigma_{5}=-1\right) \\ \ \ \ \leftrightarrow (\Phi(1-\gamma)\Phi(\gamma-1) & \Phi(\gamma-1)\Phi(\gamma-1) & \Phi(1-\gamma)\Phi(1-\gamma) & \Phi(\gamma-1)\Phi(1-\gamma))^{T},\\
			d\leftrightarrow \left(\sigma_{2}=-1 , \sigma_{5}=-1\right)\\ \ \ \ \leftrightarrow (\Phi(\gamma-1)\Phi(\gamma-1) & \Phi(1-\gamma)\Phi(\gamma-1) & \Phi(\gamma-1)\Phi(1-\gamma) & \Phi(1-\gamma)\Phi(1-\gamma))^{T}.
		\end{array}
	\end{align}
	Based on this crucial relationship, on the one hand, according to~\eqref{5.1.8}~ and symbols introduced in~\eqref{5.1.9}~,
	\begin{align*}
		\hat{E}^{0}_{t+1}
		=&\left(                 
		\begin{array}{llll}   
			a & b & c & d   
		\end{array}
		\right)\Phi_{0}\mathbb{P}_{t}=\vec{A}^{1}\mathbb{P}_{t}.
	\end{align*}	
	The elements in $\vec{A}^{1}$ satisfy $(a_{1}\  b_{1}\ c_{1}\  d_{1})=(a\  b\ c\  d) \hat{\Phi}$,
	where $\hat{\Phi}$ is the so-called \emph{ tranfer determined matrix} with the same form as ~\eqref{5.1.12}~, that is
	\begin{align*}
		\hat{\Phi}=
		\left(                 
		\begin{array}{llll}
			\Phi(1-\gamma)\Phi(1-\gamma) & \Phi(\gamma-1)\Phi(1-\gamma) &  \Phi(1-\gamma)\Phi(\gamma-1) & \Phi(\gamma-1)\Phi(\gamma-1)\\
			\Phi(\gamma-1)\Phi(1-\gamma) & \Phi(1-\gamma)\Phi(1-\gamma) &  \Phi(\gamma-1)\Phi(\gamma-1) & \Phi(1-\gamma)\Phi(\gamma-1)\\
			\Phi(1-\gamma)\Phi(\gamma-1) & \Phi(\gamma-1)\Phi(\gamma-1) & \Phi(1-\gamma)\Phi(1-\gamma) & \Phi(\gamma-1)\Phi(1-\gamma))\\
			\Phi(\gamma-1)\Phi(\gamma-1) & \Phi(1-\gamma)\Phi(\gamma-1) & \Phi(\gamma-1)\Phi(1-\gamma) & \Phi(1-\gamma)\Phi(1-\gamma)
		\end{array}
		\right).
	\end{align*}
	On the other hand, according to~\eqref{5.1.10}~, we know $\hat{E}^{0}_{t+1}=\vec{A}^{0}\mathbb{P}_{t+1}$,	
	which leads to $\hat{E}^{0}_{t+1}		=\vec{A}^{0}\mathbb{P}_{t+1}=\vec{A}^{1}\mathbb{P}_{t}$ immediately. Noticed that the discussion above works for any time $t$, then we can get following iteration 
	\begin{align}\label{4.14}
		\hat{E}^{0}_{t+1}
		=\vec{A}^{0}\mathbb{P}_{t+1}
		=\vec{A}^{1}\mathbb{P}_{t}= \cdots =\vec{A}^{t+1}\mathbb{P}_{0},
	\end{align}	
	where the parameters in $\vec{A}_{i},1\leq i\leq t+1$ are determined by
	\begin{align*}
		\left(                 
		\begin{array}{llll}   
			a_{t+1} & b_{t+1} & c_{t+1} & d_{t+1}   
		\end{array}
		\right)
		=&\left(                 
		\begin{array}{llll}   
			a_{t} & b_{t} & c_{t} & d_{t}   
		\end{array}
		\right)\hat{\Phi}=
		\cdots 
		=\left(                 
		\begin{array}{llll}   
			a & b & c & d   
		\end{array}
		\right)\hat{\Phi}^{t+1}.
	\end{align*}
	Owing to Lemma \ref{lem:5.1}, as $t\rightarrow \infty$		\begin{align*}
		\left(                 
		\begin{array}{llll}   
			a_{t+1} & b_{t+1} & c_{t+1} & d_{t+1}   
		\end{array}
		\right)
		&\rightarrow\frac{1}{4}\left(                 
		\begin{array}{llll}   
			a & b & c & d   
		\end{array}
		\right)\mathds{1}_{4}=\frac{a+b+c+d}{4}\left(                 
		\begin{array}{llll}   
			1 & 1 & 1 & 1   
		\end{array}
		\right).
	\end{align*}
	Because the initial data $\mathbb{P}_{0}$ is a probability distribution, then as $t\rightarrow \infty$, 
	\begin{align}\label{4.13}
		\hat{E}^{0}_{t+1}
		=\vec{A}^{t+1}\mathbb{P}_{0}&\rightarrow\frac{a+b+c+d}{4}
		\left(                 
		\begin{array}{llllllllllllllllllll}   
			1& 1  & \dots 
			& 1& 1 
		\end{array}
		\right)\mathbb{P}_{0}  =\frac{a+b+c+d}{4}=\frac{A}{4}.
	\end{align}
	Before proceeding further, let us recall that the expansion of $\hat{E}_{t+1}$ in equation~\eqref{5.1.8}~for general system parameter $K_{\lambda(t)}$
	\begin{align*}
		\hat{E}_{t+1}
		=&\left(                 
		\begin{array}{llll}   
			a & b & c & d   
		\end{array}
		\right)\Theta_{K_{\lambda(t)}}\mathbb{P}_{t}.             
	\end{align*}	
	Let
	\begin{align*}
		\epsilon(K_{\lambda(t)}):= \Theta_{K_{\lambda(t)}}-
		\Phi^{0},
	\end{align*}
	represents the error matrix between $\Theta_{K_{\lambda(t)}}$ and $\Phi^{0}$. So
	\begin{align}
		\hat{E}_{t+1}-\hat{E}^{0}_{t+1}
		=&\left(                 
		\begin{array}{llll}   
			a & b & c & d   
		\end{array}
		\right)\Theta_{K_{\lambda(t)}}\mathbb{P}_{t}             
		-\left(                 
		\begin{array}{llll}
			a & b & c & d   
		\end{array}
		\right)\Phi_{0}\mathbb{P}_{t}=\left(                 
		\begin{array}{llll}   
			a & b & c & d   
		\end{array}
		\right)\epsilon(K_{\lambda(t)})\mathbb{P}_{t}. 
	\end{align}
	According to Lemma \ref{lem:5.2}, we know that for any index $i,j,$
	$\left| \epsilon_{ij}(K_{\lambda(t)})\right|\leq MK_{\lambda(t)}$,
	where $M$ is a constant independent to the system parameters. The 4$\times$64 error matrix $\epsilon(K_{\lambda(t)})$ can be split into sixteen 4$\times$4 blocks, and compose a block diagonal matrix as follows 
	\begin{align*}
		\left(                 
		\begin{array}{llll}   
			a & b & c & d   
		\end{array}
		\right)\epsilon(K_{\lambda(t)})             
		&=\left(                 
		\begin{array}{lllllllll}   
			a & b & c & d &\dots & a & b & c & d  
		\end{array}
		\right)
		\left(                 
		\begin{array}{llll}   
			\epsilon_{1}(K_{\lambda(t)})  &  &  & \\ 
			&  & \ddots                     &\\
			&  &  & \epsilon_{16}(K_{\lambda(t)})
		\end{array}
		\right)\\
		&:=\left(                 
		\begin{array}{lllllllll}   
			a & b & c & d &\dots & a & b & c & d  
		\end{array}
		\right)\hat{\epsilon}(K_{\lambda(t)}).
	\end{align*}
	By Cauchy-Schwarz inequality
	\begin{align*}
		\left( \hat{E}_{t+1}-\hat{E}^{0}_{t+1}\right)^{2} 
		=&\left(\left(                 
		\begin{array}{llll}   
			a & b & c & d   
		\end{array}
		\right)\epsilon(K_{\lambda(t)})\mathbb{P}_{t}\right)^{2} =Tr^{2}\{\left(                 
		\begin{array}{lllllllll}   
			a & b & c & d &\dots & a & b & c & d  
		\end{array}
		\right)\hat{\epsilon}(K_{\lambda(t)})\mathbb{P}_{t}\}\\
		=&Tr^{2}\{\mathbb{P}_{t}\left(                 
		\begin{array}{lllllllll}   
			a & b & c & d &\dots & a & b & c & d  
		\end{array}
		\right)\hat{\epsilon}(K_{\lambda(t)})\}\\
		\leq & Tr\{\left(                 
		\begin{array}{lllllllll}   
			a & b &\dots &  c & d  
		\end{array}
		\right)^{T}\mathbb{P}^{T}_{t}\mathbb{P}_{t}\left(                 
		\begin{array}{lllllllll}   
			a & b &\dots & c & d  
		\end{array}
		\right)\}Tr\{\hat{\epsilon}^{T}(K_{\lambda(t)})\hat{\epsilon}(K_{\lambda(t)})\}\\
		\leq & C_{fg}Tr\{\hat{\epsilon}^{T}(K_{\lambda(t)})\hat{\epsilon}(K_{\lambda(t)})\}=C_{fg}\left\| \epsilon(K_{\lambda(t)})\right\|^{2}_{F}			\leq\tilde{C}_{fg}K^{2}_{\lambda(t)},
	\end{align*}
	where $C_{fg}$ and $\tilde{C}_{fg}$ depend only on constants $a,b,c,d$, that is to say, depends only on $f,g$. In view of the result in equation~\eqref{4.13}~, as $t \rightarrow \infty$, $ \hat{E}^{0}_{t+1}\rightarrow\frac{A}{4}
	$. Thanks to
	\begin{align*}
		\left(\hat{E}_{t+1}-\hat{E}^{0}_{t+1}\right)^{2} \leq \tilde{C}_{fg}K^{2}_{\lambda(t)},
	\end{align*}
	we conclude that the observable $\hat{\mathcal{S}}^{fg}_{\lambda(t)}(x_{1}, x_{2})$ tends to $\frac{A}{4}$ for any system parameter satisfies $ K_{\lambda(t)}\rightarrow 0$  as $t \rightarrow \infty$.
	Noticed that the structure of~\eqref{5.1.6}~and~\eqref{5.1.7}~ are similar, the estimate above also works if we substitute parameters $a,b,c,d$ by $\tilde{a},\tilde{b},\tilde{c},\tilde{d}$, which means as $t \rightarrow \infty$ and $ K_{\lambda(t)}\rightarrow 0$,
	$$\|fg\|^{2}_{L^{2}(K_{\lambda(t)})}\rightarrow \frac{C^{4}_{12}C^{2}_{12}}{4}
	, \tilde{\mathcal{S}}^{fg}_{\lambda(t)}(x_{1}, x_{2})\rightarrow\frac{C^{4}_{12}C^{2}_{12}}{4}\log\left(\frac{C^{4}_{12}C^{2}_{12}}{4}\right).$$		Finally		$$\mathcal{S}^{fg}_{\lambda(t)}(x_{1}, x_{2})\rightarrow\frac{A}{4}-\frac{C^{4}_{12}C^{2}_{12}}{4}\log \left(\frac{C^{4}_{12}C^{2}_{12}}{4}\right).$$
	This completes the estimate of first term of two point observables.
	
	For the second term $\mathcal{S}^{fg}(\lambda(t) x_{1}, \lambda(t) x_{2})$, recall that the initial distribution 
	$\mathbb{P}_{0}$ satisfies equation~\eqref{eq:5.2}~. According to the result in Theorem \ref{thm:3.2}, for any fixed $x_{1},x_{2}$ and $\lambda(t)$, taking $\Lambda(t)=\{1,2,\cdots,N(t)\}$ large enough such that $\lambda(t)x_{1},\lambda(t)x_{2}\in\Lambda(t)$. To be convenience, denote that
	\begin{align*}
		\mathbb{P}_{0}^{\Lambda(t)}&:=\mathbb{P}_{0}(\phi(x,0)=\sigma_{x},x\in \Lambda(t)|\phi(y,0)=\sigma_{y},y\in \Lambda^{c}(t)),\mathbb{P}_{0}^{\Lambda^{c}(t)}:=\mathbb{P}_{0}(\phi(x,0)=\sigma_{x},x\in \Lambda^{c}(t)).
	\end{align*}
	Hence,
	\begin{align*}
		\hat{\mathcal{S}}^{fg}_{N(t)}(\lambda(t) x_{1}, \lambda(t) x_{2})&=\sum\limits_{\substack{\sigma_{x}=\pm1 \\ x\in\Lambda^{c}(t)}}\hat{\mathcal{S}}_{N(t)}^{fg,\partial_{\Lambda(t)}}(\lambda(t) x_{1}, \lambda(t) x_{2})\mathbb{P}_{0}^{\Lambda^{c}(t)}=\sum_{k=1}^{4}\hat{\mathcal{S}}_{N(t)}^{fg,\partial_{\Lambda_{k}(t)}}(\lambda(t) x_{1}, \lambda(t) x_{2})\mathbb{P}_{0}(\partial_{\Lambda_{k}(t)}).		\end{align*}
	Let $i=\lambda(t) x_{1},j=\lambda(t) x_{2}$ and $\Lambda(t)=\{1,2,\cdots,N(t)\},\partial_{\Lambda}=\{0,N(t)+1\}$, we can get
	\begin{align*}	\hat{\mathcal{S}}_{N(t)}^{fg}(\lambda(t) x_{1}, \lambda(t) x_{2})&=\sum_{k=1}^{4}\hat{\mathcal{S}}_{N(t)}^{fg,\partial_{\Lambda_{k}(t)}}(\lambda(t) x_{1}, \lambda(t) x_{2})\mathbb{P}_{0}(\partial_{\Lambda_{k}(t)})=\sum_{k=1}^{4}(I_{1}^{k}+I_{2}^{k})\mathbb{P}_{0}(\partial_{\Lambda_{k}(t)}),\\
		\tilde{\mathcal{S}}_{N(t)}^{fg}(\lambda(t) x_{1}, \lambda(t) x_{2})&=\sum_{k=1}^{4}\tilde{\mathcal{S}}_{N(t)}^{fg,\partial_{\Lambda_{k}(t)}}(\lambda(t) x_{1}, \lambda(t) x_{2})\mathbb{P}_{0}(\partial_{\Lambda_{k}(t)})=\sum_{k=1}^{4}J_{k}\log J_{k}\mathbb{P}_{0}(\partial_{\Lambda_{k}(t)}).
	\end{align*}
	Notice that for any boundary condition and  time $t$ 
	$$\sum_{k=1}^{4}\mathbb{P}_{0}(\partial_{\Lambda_{k}(t)})=1.$$
	According to equation~\eqref{eq:3.13}~, taking limits $t\rightarrow+\infty$ and $N(t)\rightarrow+\infty$,
	\begin{align*}	
		\lim\limits_{t\rightarrow+\infty}\hat{\mathcal{S}}_{N(t)}^{fg}(\lambda(t) x_{1}, \lambda(t) x_{2})&=\lim\limits_{t\rightarrow+\infty}(M_{11}-M_{21})\lambda_{+}^{-j-1}\mathbb{P}_{0}(\partial_{\Lambda_{1}})+(M_{21}+M_{11})\lambda_{+}^{-j-1}\mathbb{P}_{0}(\partial_{\Lambda_{2}})\\
		&+\lim\limits_{t\rightarrow+\infty}(M_{11}-M_{21})\lambda_{+}^{-j-1}\mathbb{P}_{0}(\partial_{\Lambda_{3}})+(M_{21}+M_{11})\lambda_{+}^{-j-1} \mathbb{P}_{0}(\partial_{\Lambda_{4}})\\ &+\lim\limits_{t\rightarrow+\infty}(L_{11}-L_{21})\lambda_{+}^{-j-1}\mathbb{P}_{0}(\partial_{\Lambda_{1}})+(L_{21}+L_{11})\lambda_{+}^{-j-1}\mathbb{P}_{0}(\partial_{\Lambda_{2}})\\
		&+\lim\limits_{t\rightarrow+\infty}(L_{11}-L_{21})\lambda_{+}^{-j-1}\mathbb{P}_{0}(\partial_{\Lambda_{3}})+(L_{21}+L_{11})\lambda_{+}^{-j-1} \mathbb{P}_{0}(\partial_{\Lambda_{4}}).
	\end{align*}
	However
	\begin{align*}
		\lim\limits_{t\rightarrow+\infty}M_{21}\lambda_{+}^{-j-1}&=\lim\limits_{t\rightarrow+\infty}\hat{C}^{1}_{21}\hat{C}^{2}_{11}\lambda_{+}^{-i-3}\lambda_{-}^{i-1}+\hat{C}^{1}_{22}\hat{C}^{2}_{21}\lambda_{-}^{j-3}\lambda_{+}^{-j-1}\\
		&=\lim\limits_{t\rightarrow+\infty}\hat{C}^{1}_{21}\hat{C}^{2}_{11}\lambda_{+}^{-4}(\frac{\lambda_{-}}{\lambda_{+}})^{\lambda(t) x_{1}-1}+\hat{C}^{1}_{22}\hat{C}^{2}_{21}\lambda_{-}^{-4}(\frac{\lambda_{-}}{\lambda_{+}})^{\lambda(t) x_{2}+1}=0,\\
		\lim\limits_{t\rightarrow+\infty}L_{21}\lambda_{+}^{-j-1}&=\lim\limits_{t\rightarrow+\infty}\hat{C}^{4}_{21}\hat{C}^{3}_{11}\lambda_{+}^{-i-3}\lambda_{-}^{i-1}+\hat{C}^{4}_{22}\hat{C}^{3}_{21}\lambda_{-}^{j-3}\lambda_{+}^{-j-1}\\
		&=\lim\limits_{t\rightarrow+\infty}\hat{C}^{4}_{21}\hat{C}^{3}_{11}\lambda_{+}^{-4}(\frac{\lambda_{-}}{\lambda_{+}})^{\lambda(t) x_{1}-1}+\hat{C}^{4}_{22}\hat{C}^{3}_{21}\lambda_{-}^{-4}(\frac{\lambda_{-}}{\lambda_{+}})^{\lambda(t) x_{2}+1}=0.
	\end{align*}   
	Then
	\begin{align*}	
		\lim\limits_{t\rightarrow+\infty}\hat{\mathcal{S}}_{N(t)}^{fg}(\lambda(t) x_{1}, \lambda(t) x_{2})&=\lim\limits_{t\rightarrow+\infty}M_{11}\lambda_{+}^{-j-1}\left(\mathbb{P}_{0}(\partial_{\Lambda_{1}})+\mathbb{P}_{0}(\partial_{\Lambda_{2}})+\mathbb{P}_{0}(\partial_{\Lambda_{3}})+\mathbb{P}_{0}(\partial_{\Lambda_{4}})\right)\\
		&+\lim\limits_{t\rightarrow+\infty}L_{11}\lambda_{+}^{-j-1}\left(\mathbb{P}_{0}(\partial_{\Lambda_{1}})+\mathbb{P}_{0}(\partial_{\Lambda_{2}})+\mathbb{P}_{0}(\partial_{\Lambda_{3}})+\mathbb{P}_{0}(\partial_{\Lambda_{4}})\right)\\
		&=\lim\limits_{t\rightarrow+\infty}\lambda_{+}^{-j-1}\left((\hat{C}^{1}_{11}\hat{C}^{2}_{11}+\hat{C}^{4}_{11}\hat{C}^{3}_{11})\lambda_{+}^{j-3}+(\hat{C}^{1}_{12}\hat{C}^{2}_{21}+\hat{C}^{4}_{12}\hat{C}^{3}_{21})\lambda_{+}^{i-1}\lambda_{-}^{j-i-2}\right)\\
		&=(\hat{C}^{1}_{11}\hat{C}^{2}_{11}++\hat{C}^{4}_{11}\hat{C}^{3}_{11})\lambda_{+}^{-4}=\frac{1}{4}C_{12}^{1}C_{12}^{2}+\frac{1}{4}C_{12}^{4}C_{12}^{3}=\frac{A}{4}.
	\end{align*}
	Similarly
	\begin{align*}	
		\lim\limits_{t\rightarrow+\infty}\|fg\|^{2}_{L^{2}(\{\sigma\},\mathbb{P}_{0})}&=\lim\limits_{t\rightarrow+\infty}R_{11}\lambda_{+}^{-j-1}\left(\mathbb{P}_{0}(\partial_{\Lambda_{1}})+\mathbb{P}_{0}(\partial_{\Lambda_{2}})+\mathbb{P}_{0}(\partial_{\Lambda_{3}})+\mathbb{P}_{0}(\partial_{\Lambda_{4}})\right)\\
		&=\lim\limits_{t\rightarrow+\infty}\lambda_{+}^{-j-1}\left(\hat{C}^{4}_{11}\hat{C}^{2}_{11}\lambda_{+}^{j-3}+\hat{C}^{4}_{12}\hat{C}^{2}_{21}\lambda_{+}^{i-1}\lambda_{-}^{j-i-2}\right)=\hat{C}^{4}_{11}\hat{C}^{2}_{11}\lambda_{+}^{-4}=\frac{1}{4}C_{12}^{4}C_{12}^{2}.
	\end{align*}
	Hence
	\begin{align*}
		\lim_{t \to \infty}\tilde{\mathcal{S}}_{N(t)}^{fg}(\lambda(t) x_{1}, \lambda(t) x_{2})
		&=\frac{1}{4}C_{12}^{4}C_{12}^{2}\log\left(\frac{1}{4}C_{12}^{4}C_{12}^{2}\right).
	\end{align*}
	This completes the proof.
\end{proof}
	\begin{corollary}
		Taking the factor $Z(\lambda(t))=1$ in renormalization group equation. Then the new observable renormalization group equation can be written as
		\begin{align*}
			\mathcal{S}^{fg}_{\lambda(t)}(x_{1}, x_{2})-
			\mathcal{S}^{fg}(\lambda(t) x_{1}, \lambda(t) x_{2})&\rightarrow0,
		\end{align*}
as $t\rightarrow \infty$ and $\lambda(t) \rightarrow \infty$.
	\end{corollary}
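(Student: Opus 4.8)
The plan is to derive this corollary directly as an immediate consequence of Theorem \ref{th:5.3}. Recall from the observable RG equation \eqref{4.21} that, once we fix the normalization factor $Z(\lambda(t))=1$, the remainder term is by definition exactly
$$\mathcal{O}_{\lambda(t)}(x_{1}, x_{2})=\mathcal{S}^{fg}_{\lambda(t)}(x_{1}, x_{2})-\mathcal{S}^{fg}(\lambda(t) x_{1}, \lambda(t) x_{2}),$$
so proving the corollary amounts to showing that this particular difference tends to $0$ in the stated limit.

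First I would set $L:=\frac{A}{4}-\frac{C^{4}_{12}C^{2}_{12}}{4}\log\left(\frac{C^{4}_{12}C^{2}_{12}}{4}\right)$, the common limiting value identified in Theorem \ref{th:5.3}. Under the hypotheses $t\to\infty$, $\lambda(t)\to\infty$ and $K_{\lambda(t)}\to K^{*}=0$, i.e.\ conditions (1)--(3) imposed at the beginning of Section \ref{sec:5.1}, that theorem supplies the two one-sided convergences $\mathcal{S}^{fg}_{\lambda(t)}(x_{1}, x_{2})\to L$ and $\mathcal{S}^{fg}(\lambda(t) x_{1}, \lambda(t) x_{2})\to L$, each established separately there.

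Then I would simply subtract: since both quantities converge to the same finite limit $L$, their difference converges to $L-L=0$, which is precisely the assertion $\mathcal{O}_{\lambda(t)}(x_{1}, x_{2})\to 0$ claimed in the corollary. Writing the difference through the decomposition $\mathcal{S}^{fg}=\hat{\mathcal{S}}^{fg}-\tilde{\mathcal{S}}^{fg}$ of \eqref{2.13} makes the cancellation transparent, but no further estimate is required beyond the convergence of each term.

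There is essentially no analytic obstacle here, since Theorem \ref{th:5.3} has already performed all the substantive work: the spectral analysis of $\tilde{\Phi}$ in Lemma \ref{lem:5.1}, the Lipschitz error bound of Lemma \ref{lem:5.2} controlling the deviation of $\Theta_{K_{\lambda(t)}}$ from $\Phi^{0}$, and the boundary-condition computation inherited from Theorem \ref{thm:3.2}. The only point genuinely worth checking is internal consistency, namely that the normalization $Z(\lambda(t))=1$ adopted in the corollary is exactly the normalization already used when the limits in Theorem \ref{th:5.3} were derived; invoking those limits under $Z(\lambda(t))=1$ therefore introduces no circularity or mismatch, and the conclusion follows at once.
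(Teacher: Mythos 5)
Your proposal is correct and matches the paper exactly: the paper offers no separate argument for this corollary, treating it precisely as you do, namely as an immediate consequence of Theorem~\ref{th:5.3}, since both $\mathcal{S}^{fg}_{\lambda(t)}(x_{1}, x_{2})$ and $\mathcal{S}^{fg}(\lambda(t) x_{1}, \lambda(t) x_{2})$ converge to the common limit $\frac{A}{4}-\frac{C^{4}_{12}C^{2}_{12}}{4}\log\left(\frac{C^{4}_{12}C^{2}_{12}}{4}\right)$, so their difference tends to $0$. Your additional consistency check that $Z(\lambda(t))=1$ is the normalization already implicit in Theorem~\ref{th:5.3} is a sensible remark but introduces nothing beyond what the paper intends.
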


\section{The estimate of any finite-point observables}\label{sec:6}
In this section, the calculation of two point observables will be extended to arbitrary finite point, which enormously  enriches the observables researchers interested in. Using the symbols introduced above, consider the observable $f(\phi(x_{1}),\cdots\,\phi(x_{m}))\in L^{2}(\mathbb{R}^{m})$. Let $$\{f^{2}(\phi(x_{1}),\cdots\,\phi(x_{m})):\phi(x_{i})=\sigma_{i} =\pm1\}:=\{a_{1},a_{2},\dots ,a_{2^{m}}\}.$$
Define the $n$-point observable  $\mathcal{E}^{f}_{\lambda(t)}( x_{1},\cdots,x_{m} )$ as follows 
\begin{align*}		\mathcal{E}^{f}_{\lambda(t)}( x_{1},\cdots,x_{m} )=&\mathbb{E}_{K_{\lambda(t)}}\left[f^{2}(\phi(x_{1},t+1),\cdots,\phi(x_{m},t+1))) \right],
\end{align*}
which can be expressed like equation~\eqref{5.1.6}~ 
\begin{align}\label{6.3}
	\mathcal{E}^{f}_{\lambda(t)}&( x_{1},\cdots,x_{m} )\nonumber\\   =&a_{1}\mathbb{P}\left(\phi(x_{1},t+1)=1,\cdots,\phi(x_{m},t+1)=1\right)+a_{2}\mathbb{P}\left(\phi(x_{1},t+1)=-1,\cdots,\phi(x_{m},t+1)=1\right)\nonumber\\
	&	\vdots\\
	+&a_{2^{m}-1}\mathbb{P}\left(\phi(x_{1},t+1)=1,\cdots,\phi(x_{m},t+1)=-1\right)+a_{2^{m}}\mathbb{P}\left(\phi(x_{1},t+1)=-1,\cdots,\phi(x_{m},t+1)=-1\right).\nonumber
\end{align} 
To analyse the properties of $\mathcal{E}^{f}_{\lambda(t)}( x_{1},\cdots,x_{m} )$, we need to extend  Lemma \ref{lem:5.1} firstly.
\begin{lemma}\label{lem:6.1}
	Assume that $\hat{\Phi}$ is a real symmetric matrix introduced in Theorem \ref{thm:6.2}  with  $\hat{\Phi}_{ij}=\hat{\Phi}_{ji}>0$
	\begin{align*}
		\hat{\Phi}=
		\left(                 
		\begin{array}{lll}
			\dots & \prod\limits_{k=1}^{m}\Phi((1-\gamma)\sigma_{3k-1}) &  \dots \\
			\dots & \Phi(-(1-\gamma)\sigma_{2})\prod\limits_{k=2}^{m}\Phi((1-\gamma)\sigma_{3k-1}) &  \dots \\
			& \qquad \vdots\\
			\dots & \prod\limits_{k=1}^{m}\Phi(-(1-\gamma)\sigma_{3k-1}) &  \dots 
		\end{array}
		\right),
	\end{align*}
	where $\sigma_{i}=\pm 1 $ are spin variables.  The signal $+$ and $-$ before spins $\{\sigma_{3k-1}: k=1,\dots,m\}$ in the $i$-th row of $\hat{\Phi}$ are determined by the binary sort in $i$-th row of  $2^{m}$ vector $\mathbb{A}_{t+1}$ defined in~\eqref{6.5}~, and the values of spins $\{\sigma_{3k-1}: k=1,\dots,m\}$ in $j$-th row of $\hat{\Phi}$ are determined by the  binary sort in  $j$-th row of  $\mathbb{A}_{t+1}$. Then we claim that $\hat{\Phi}$  also  admits the results in Lemma \ref{lem:5.1}, which means
	\begin{align*}
		\xi_{1}^{T}
		=\frac{1}{\sqrt{2^{m}}}\left(                 
		\begin{array}{cccc}   
			1&1&\cdots&1
		\end{array}
		\right), \quad and \quad 
		\hat{\Phi}^{t+1}\rightarrow\xi_{1}\xi_{1}^{T}
		=\frac{1}{2^{m}}\mathds{1}_{2^{m}}  \ as \ t\rightarrow \infty.
	\end{align*} 
\end{lemma}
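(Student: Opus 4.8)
The plan is to recognize that $\hat{\Phi}$ is an $m$-fold Kronecker power of the single-site $2\times 2$ matrix analyzed implicitly in Lemma \ref{lem:5.1}, and then to read off its entire spectrum from the multiplicativity of eigenvalues under tensor products. Set $p=\Phi(1-\gamma)$ and $q=\Phi(\gamma-1)$, and put
\begin{align*}
M=\begin{pmatrix} p & q \\ q & p \end{pmatrix}.
\end{align*}
First I would verify the structural identity $\hat{\Phi}=M^{\otimes m}$. This is precisely the content of the indexing convention in the statement: the sign pattern attached to $\{\sigma_{3k-1}\}_{k=1}^{m}$ in row $i$ is the binary expansion of $i$, while the spin values in column $j$ are the binary expansion of $j$, so the $(i,j)$ entry factorizes as $\prod_{k=1}^{m} M_{i_k j_k}$, where $i_k,j_k\in\{0,1\}$ are the $k$-th bits. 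For $m=2$ this reproduces the matrix $\tilde{\Phi}$ of~\eqref{5.1.12} because $pq=qp$, so the construction is the genuine generalization of Lemma \ref{lem:5.1}.

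Next, diagonalize the single factor. Since $q=1-p$ and $0<p<1$, the row sums of $M$ equal $1$, its eigenvalues are $1$ and $\delta:=p-q=2\Phi(1-\gamma)-1\in(-1,1)$, with orthonormal eigenvectors $v_1=\tfrac{1}{\sqrt2}(1,1)^{T}$ and $v_2=\tfrac{1}{\sqrt2}(1,-1)^{T}$. By the multiplicativity of the spectrum under Kronecker products, the eigenvalues of $\hat{\Phi}=M^{\otimes m}$ are exactly the products $\prod_{k=1}^{m}\mu_k$ with each $\mu_k\in\{1,\delta\}$; equivalently they are $\delta^{\ell}$ with multiplicity $\binom{m}{\ell}$ for $\ell=0,1,\dots,m$. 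The choice $\ell=0$ gives the eigenvalue $1$ with multiplicity $\binom{m}{0}=1$ and eigenvector $v_1^{\otimes m}=\xi_1=\tfrac{1}{\sqrt{2^m}}(1,\dots,1)^{T}$, while every other eigenvalue satisfies $|\delta^{\ell}|=|\delta|^{\ell}<1$. Hence the spectral radius is $1$ and its algebraic multiplicity is $1$, as claimed.

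Finally, for the convergence of powers I would use $\hat{\Phi}^{t+1}=(M^{\otimes m})^{t+1}=(M^{t+1})^{\otimes m}$. Writing $M=v_1v_1^{T}+\delta\,v_2v_2^{T}$ yields $M^{t+1}=v_1v_1^{T}+\delta^{t+1}v_2v_2^{T}\to v_1v_1^{T}=\tfrac12\mathds{1}_2$ as $t\to\infty$, since $|\delta|<1$. Kronecker products are continuous, so
\begin{align*}
\hat{\Phi}^{t+1}=(M^{t+1})^{\otimes m}\longrightarrow\Big(\tfrac12\mathds{1}_2\Big)^{\otimes m}=\tfrac{1}{2^m}\mathds{1}_{2^m}=\xi_1\xi_1^{T},\qquad t\to\infty,
\end{align*}
which is the asserted limit.

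The main obstacle is the bookkeeping in the first step: one must check carefully that the binary-sort convention defining the rows and columns of $\hat{\Phi}$ coincides with the tensor indexing of $M^{\otimes m}$, and in particular that the common offset $\gamma$ at every site makes all $m$ factors equal to the \emph{same} matrix $M$. If one prefers to bypass this explicit identification, an equivalent route is available: $\hat{\Phi}$ is symmetric, entrywise positive, and doubly stochastic---the row sums equal $1$ because $\Phi(x)+\Phi(-x)=1$ factorizes over the $m$ sites---hence primitive, and the Perron--Frobenius theorem delivers simplicity of the eigenvalue $1$, the all-ones Perron eigenvector, and convergence of the powers to the rank-one projection $\tfrac{1}{2^m}\mathds{1}_{2^m}$ directly.
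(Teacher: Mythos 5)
Your proof is correct, and it takes a genuinely different route from the paper's. The paper argues matrix-theoretically without ever exploiting the product structure of $\hat{\Phi}$: it checks that the column sums equal $1$ (so $1$ is an eigenvalue with eigenvector $\xi_{1}$), bounds the spectral radius by $1$ via the Gersgorin circle theorem, and then---its key step---establishes simplicity of the eigenvalue $1$ through a rank argument: deleting the first row and column leaves a principal submatrix whose Gersgorin discs lie strictly inside the unit disc (each column sum drops to $1-\hat{\Phi}_{1i}<1$ by strict positivity of the entries), so that submatrix minus the identity is invertible, giving $\mathrm{Rank}(\hat{\Phi}-I)=2^{m}-1$; the convergence of powers is then dispatched as ``trivial'' by diagonalization. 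Your identification $\hat{\Phi}=M^{\otimes m}$ replaces all of this with the mixed-product property and gives strictly more: the full spectrum $\delta^{\ell}$ with multiplicities $\binom{m}{\ell}$---which for $m=2$ explains structurally the eigenvalues $1$, $\Phi(x)-\Phi(-x)$ (twice), and $(\Phi(x)-\Phi(-x))^{2}$ that the paper obtained in Lemma \ref{lem:5.1} by solving $\det(\tilde{\Phi}-\lambda I)=0$ by hand---together with an explicit convergence rate $O(|\delta|^{t})$ for $\hat{\Phi}^{t+1}\to\frac{1}{2^{m}}\mathds{1}_{2^{m}}$, neither of which the paper's argument yields. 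The bookkeeping you flag is indeed the only delicate point, and your resolution is sound: since every tensor factor is the \emph{same} matrix $M$ (the offset $\gamma$ is site-independent) and the entries $\prod_{k}M_{i_{k}j_{k}}$ are invariant under any simultaneous permutation of bit positions in rows and columns, the paper's binary-sort convention and the standard Kronecker indexing produce literally the same matrix, as the $m=2$ case \eqref{5.1.12} confirms. Your Perron--Frobenius fallback (symmetric, entrywise positive, doubly stochastic, hence primitive) is essentially a cleaned-up version of the paper's own Gersgorin-plus-rank route, and shares its advantage of not depending on the tensor structure---it would survive, for instance, site-dependent offsets, where your first argument would still apply but with unequal factors $M_{k}$.
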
	
\begin{proof}
	Summing over the columns, we have $\sum_{k=1}^{2^{m}}\hat{\Phi}_{kj}=(\Phi((1-\gamma))+\Phi(-(1-\gamma))^{m}=1,\forall j=1,\cdots, 2^{m}$
	Obviously, 1 is a eigenvalue of $\hat{\Phi}$ with eigenvector $\xi_{1}$ and $\xi_{1}^{T}
	=\frac{1}{\sqrt{2^{m}}}\left(1\ 1 \cdots\ 1         \right) $.
	By Gersgorin Circle Theorem, each eigenvalue $\lambda$ of matrix  $\hat{\Phi}$ satisfies$	|\lambda|\leq |\lambda-\hat{\Phi}_{ii}|+\hat{\Phi}_{ii}\leq \sum_{k=1,k \neq i}^{2^{m}}\hat{\Phi}_{ki}+\hat{\Phi}_{ii}=1$.		
	By Gersgorin Circle Theorem, the spectral radius of  $\hat{\Phi}$ is less than or equal to 1. The key point is to prove that the algebraic multiplicity of the largest eigenvalue 1 is single, which is equivalent to $Rank(\hat{\Phi}-I)=2^{m}-1$. Noticed that 
	\begin{align}\label{6.42}
		Rank(\hat{\Phi}-I)
		=2^{m}-1
		&\Longleftrightarrow Rank\left(                 
		\begin{array}{lll}
			\hat{\Phi}_{11}-1 & \dots  & \hat{\Phi}_{12^{m}}\\
			\vdots & \ddots  &\vdots \\
			\hat{\Phi}_{2^{m}1} & \dots  &  \hat{\Phi}_{2^{m}2^{m}}-1 
		\end{array}
		\right)=2^{m}-1\nonumber\\
		&\Longleftarrow Rank\left(                 
		\begin{array}{llll}
			\hat{\Phi}_{22}-1  &\dots& \hat{\Phi}_{22^{m}}\\
			\vdots &  \ddots &\vdots \\
			\hat{\Phi}_{2^{m}2}  & \dots  & \hat{\Phi}_{2^{m}2^{m}}-1 
		\end{array}
		\right)=2^{m}-1.
	\end{align} 
	Due to any $\hat{\Phi}_{ij}>0$, for submatrix 
	\begin{align*}
		\tilde{\Phi}=\left(                 
		\begin{array}{llll}
			\hat{\Phi}_{22}  &\dots& \hat{\Phi}_{22^{m}}\\
			\vdots &  \ddots &\vdots \\
			\hat{\Phi}_{2^{m}2}  & \dots  & \hat{\Phi}_{2^{m}2^{m}}
		\end{array}
		\right) ,
	\end{align*} 	
	By Gersgorin Circle Theorem, for $ 1\leq i\leq2^{m}-1$, each eigenvalue $\tilde{\lambda}$ satisfies
	$|\tilde{\lambda}-\tilde{\Phi}_{ii}|\leq \sum_{k=1,k \neq i}^{2^{m}-1}\hat{\Phi}_{ki}$. Then
	\begin{align*}
		|\tilde{\lambda}|\leq |\tilde{\lambda}-\tilde{\Phi}_{ii}|+\tilde{\Phi}_{ii}\leq \sum_{k=1,k \neq i}^{2^{m}-1}\hat{\Phi}_{ki}+\hat{\Phi}_{ii}=1-\hat{\Phi}_{1i}<1.	
	\end{align*} 		
	the spectral radius of the submatrix $\tilde{\Phi}$ is smaller than 1 strictly, which means 1 is not the eigenvalue of $\tilde{\Phi}$, then $\tilde{\Phi}-I$ is invertible, which leads to $Rank(\tilde{\Phi}-I)=2^{m}-1$. Thanks to the equivalence in~\eqref{6.42}~, $Rank(\hat{\Phi}-I)=2^{m}-1$. The rest of argument is trivial.
\end{proof}
\begin{theorem}\label{thm:6.2}
	Considering the stochastic dynamic~\eqref{5.11}~ with system parameter $K_{\lambda(t)}$ and initial distribution  $\mathbb{P}_{0}$. Choosing a  scaling parameter $\lambda(t)\uparrow +\infty$ as  $t\rightarrow +\infty$, and the parameter $K_{\lambda(t)}$ satisfies  $K_{\lambda(t)}\rightarrow K^{*}$. Then for positive integer $m\geq 1$, the  finite point observable $\mathcal{E}^{f}_{\lambda(t)}( x_{1},\cdots,x_{m} )$ satisfies
	\begin{align*}
		\mathcal{E}^{f}_{\lambda(t)}(x_{1},\cdots,x_{m})\rightarrow\mathbb{E}_{K^{*}=0}\left[ f^{2}\right],as \ \ t\rightarrow \infty,
	\end{align*}
	where $\mathbb{E}_{K^{*}=0}\left[ f^{2}\right]$ represents the observable at fixed point and the results above are independent to $\mathbb{P}_{0}$.
\end{theorem}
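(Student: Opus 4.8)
The plan is to mirror the proof of Theorem \ref{th:5.3}, replacing the four-dimensional spectral analysis by its $2^m$-dimensional analogue from Lemma \ref{lem:6.1}. As in the two-point case, I would first isolate the contribution of the initial data from the dynamics by treating the idealized system in which $K_{\lambda(t)} = 0$ for every $t$, denoting the corresponding observable $\mathcal{E}^{f,0}$, and only afterwards control the discrepancy produced by a nonzero but vanishing coupling $K_{\lambda(t)} \to 0$.

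First, in the case $K_{\lambda(t)} \equiv 0$, I would write the observable via the expansion \eqref{6.3} as an inner product $\vec{A}\cdot \mathbb{A}_{t+1}$, where $\vec{A}$ collects the values $\{a_1, \ldots, a_{2^m}\}$ and $\mathbb{A}_{t+1} = \Theta_{K_{\lambda(t)}}\mathbb{P}_t$ is the joint law of the $m$ spins at time $t+1$. Exactly as in \eqref{4.14}, the fact that the observable depends only on the spins sitting at $x_1, \ldots, x_m$, and not on their neighbours, lets me transfer one power of the transition kernel at each step onto the coefficient vector, yielding the iteration $(a_1, \ldots, a_{2^m})\hat{\Phi}^{t+1}$ with $\hat{\Phi}$ the transfer-determined matrix of Lemma \ref{lem:6.1}. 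Invoking that lemma, $\hat{\Phi}^{t+1} \to \frac{1}{2^m}\mathds{1}_{2^m}$, so the coefficient vector converges to $\frac{1}{2^m}\bigl(\sum_i a_i\bigr)(1, \ldots, 1)$; pairing this against the probability vector $\mathbb{P}_0$, whose entries sum to one, gives
$$\mathcal{E}^{f,0}( x_{1},\cdots,x_{m} ) \to \frac{1}{2^m}\sum_{i=1}^{2^m} a_i = \mathbb{E}_{K^*=0}[f^2],$$
which is manifestly independent of $\mathbb{P}_0$.

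Next, to pass from $K_{\lambda(t)} \equiv 0$ to a general vanishing sequence, I would set $\epsilon(K_{\lambda(t)}) := \Theta_{K_{\lambda(t)}} - \Phi^0$ and estimate $\mathcal{E}^{f}_{\lambda(t)} - \mathcal{E}^{f,0} = \vec{A}\,\epsilon(K_{\lambda(t)})\,\mathbb{P}_t$ through the same Cauchy--Schwarz and Frobenius-norm bound used in Theorem \ref{th:5.3}. This requires a uniform Lipschitz control $|\epsilon_{ij}(K_{\lambda(t)})| \leq M K_{\lambda(t)}$ on the entries, which reduces to bounding differences of products $\prod_{k=1}^m \Phi(\,\cdot\,)$; since each $\Phi$ is bounded by one and Lipschitz, a telescoping argument upgrades Lemma \ref{lem:5.2} from two factors to $m$ factors. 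The resulting estimate yields $\bigl(\mathcal{E}^{f}_{\lambda(t)} - \mathcal{E}^{f,0}\bigr)^2 \leq \tilde{C}_f K^2_{\lambda(t)} \to 0$, and combining this with the limit of the previous paragraph completes the proof.

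The main obstacle I anticipate is combinatorial rather than analytic: one must verify that the binary ordering of the $2^m$ configurations in $\mathbb{A}_{t+1}$ is consistent with the rows and columns of $\hat{\Phi}$, so that the step of transferring one kernel onto the coefficient vector still produces precisely the symmetric matrix of Lemma \ref{lem:6.1}. This is where the separation hypothesis on the $x_i$ enters: the sites $x_1, \ldots, x_m$ must be far enough apart that the neighbouring-spin windows do not overlap, so that the transition probabilities factor into a product of $m$ independent Gaussian integrals and the kernel acquires the tensorized form assumed in Lemma \ref{lem:6.1}. Once that structural identity is in place, the spectral convergence and the error estimate are routine extensions of the two-point arguments.
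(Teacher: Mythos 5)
Your proposal follows essentially the same route as the paper's own proof: first the idealized case $K_{\lambda(t)}\equiv 0$, handled by transferring one power of the transition kernel onto the coefficient vector at each step so that the iteration is governed by the transfer determined matrix $\hat{\Phi}$ and the spectral convergence $\hat{\Phi}^{t+1}\rightarrow \frac{1}{2^{m}}\mathds{1}_{2^{m}}$ of Lemma \ref{lem:6.1}, then the perturbative step via the error matrix $\epsilon(K_{\lambda(t)})=\Theta_{K_{\lambda(t)}}-\Phi^{0}$ with the Cauchy--Schwarz and Frobenius-norm bound inherited from Theorem \ref{th:5.3}. Your explicit telescoping upgrade of the Lipschitz estimate of Lemma \ref{lem:5.2} to $m$ factors, and your remark that the sites must be separated so the neighbouring-spin windows do not overlap, merely make precise details the paper compresses into ``the rest of the analysis is similar to Theorem \ref{th:5.3}.''
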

\begin{proof}
	The main idea is same as Theorem \ref{th:5.3}. 
	Obviously, the probability distribution of $m$ dimensional spin variables $(S_{1},\cdots,S_{m})$ at time $t+1$ are totally determined by  $3m$ dimensional spin variables $(\sigma_{1},\cdots,\sigma_{3m})$ at time $t$. Undergoing the structure rearrangement of the $3m$ dimensional spin variables in a binary sort 
	\begin{align}\label{6.4}
		&\mathbb{P}_{t}=\left(                 
		\begin{array}{ccccccc}
			\\
			P^{1}_{t}\\
			P^{2}_{t}\\	
			P^{3}_{t}\\
			P^{4}_{t}\\
			\vdots \\
			P^{2^{3m-1}}_{t}\\
			\vdots\\
			P^{2^{3m}}_{t}
		\end{array}
		\right)\leftrightarrow            
		\left(                 
		\begin{array}{rrrrrrrr}
			\sigma_{1}&\sigma_{2}&\cdots&\cdots&\sigma_{3m-1}&\sigma_{3m}\\
			1 &1 &\cdots&\cdots&1 &1\\
			-1&1 &\cdots&\cdots&1 &1\\	
			1 &-1 &\cdots&\cdots&1 &1\\
			-1&-1 &\cdots&\cdots&1 &1\\
			\qquad &\vdots &\qquad &\vdots\\
			1 &1 &\cdots&\cdots&1 &-1\\
			\qquad &\vdots &\qquad &\vdots\\
			-1&-1 &\cdots&\cdots&-1 &-1
		\end{array}
		\right),            
	\end{align}	
	where 
	\begin{align*}	P^{1}_{t}=&\mathbb{P}(\phi(x_{1}-1,t)=1,\phi(x_{1},t)=1,\phi(x_{1}+1,t)=1,\\
		&  \cdots,  \phi(x_{m}-1,t)=1,\phi(x_{m},t)=1,\phi(x_{m}+1,t)=1),
	\end{align*}	
	and $\{P_{t}^{i}\}_{i=1,\cdots,2^{3m}}$ are the probability values corresponding to these  $3m$ dimensional spins $(\sigma_{1},\cdots,\sigma_{3m})$. The probability at time $t+1$ above can also be written as~\eqref{5.1.3}~similarly.  Define $\mathbb{A}_{t+1}$ as follows
	\begin{align}\label{6.5}
		\mathbb{A}_{t+1}  := \left(                 
		\begin{array}{c}
			\mathbb{P}\left(\phi(x_{1},t+1)= 1,\cdots, \phi(x_{m},t+1)= 1\right)  \\  
			\mathbb{P}\left(\phi(x_{1},t+1)=-1,\cdots, \phi(x_{m},t+1)= 1\right) \\  
			\vdots \\
			\mathbb{P}\left(\phi(x_{1},t+1)= 1,\cdots \phi(x_{m},t+1)=-1\right)\\
			\mathbb{P}\left(\phi(x_{1},t+1)=-1,\cdots, \phi(x_{m},t+1)=-1\right)
		\end{array}
		\right)           
		:= \Theta_{K_{\lambda(t)}}\mathbb{P}_{t},
	\end{align}	
	where $\Theta_{K_{\lambda(t)}}$ is the $2^{m}\times2^{3m}$ 
	transition probability matrix, which connects  the joint distribution $\mathbb{A}_{t+1}$ of $m$-point new spin variables at time $t+1$ to the joint distribution $\mathbb{P}_{t}$ of original $3m$-point spins at time $t$.
	
	In order to calculate more concretely, let us consider the case $K_{\lambda(t)}=0,t\geq0$ firstly.
	Denote that  $\Phi^{0}$ is the transition probability matrix at $K_{\lambda(t)}=0$. Similar to the argument in Theorem \ref{th:5.3}, the columns in $\Phi^{0}$ are totally determined by $\{\sigma_{3k-1},k=1,\cdots,m\}$, so there are only $2^{m}$ different columns in $\Phi^{0}$, which compose the so-called $2^m$ dimensional \emph{ tranfer determined matrix } $\hat{\Phi}$.  $\hat{\Phi}$ can be expressed as
	\begin{align*}
		\hat{\Phi}=
		\left(                 
		\begin{array}{lll}
			\dots & \prod\limits_{k=1}^{m}\Phi((1-\gamma)\sigma_{3k-1}) &  \dots \\
			\dots & \Phi(-(1-\gamma)\sigma_{2})\prod\limits_{k=2}^{m}\Phi((1-\gamma)\sigma_{3k-1}) &  \dots \\
			& \qquad \vdots\\
			\dots & \prod\limits_{k=1}^{m}\Phi(-(1-\gamma)\sigma_{3k-1}) &  \dots 
		\end{array}
		\right),
	\end{align*}
	where $\sigma=\pm 1$ are spins. The signal $+$ and $-$ before spins $\{\sigma_{3k-1}: k=1,\dots,m\}$ in the $i$-th row of $\hat{\Phi}$ are determined by the binary sort in $i$-th row of  $2^{m}$ vector $\mathbb{A}_{t+1}$, and the values of spins $\{\sigma_{3k-1}: k=1,\dots,m\}$ in $j$-th row of $\hat{\Phi}$ are determined by the  binary sort in  $j$-th row of  $\mathbb{A}_{t+1}$.Then, $\hat{\Phi}$ is a real symmetric matrix with elements $\hat{\Phi}_{ij}=\hat{\Phi}_{ji}>0$. The iteration similar to~\eqref{4.14}~in Theorem \ref{th:5.3} can be established
	\begin{align}\label{4.47}
		\hat{E}^{0}_{t+1}
		=&\left(                 
		\begin{array}{llllllllllllllllllll}   
			a_{1} & a_{1} & a_{2} & a_{2} & \dots & a_{2^m-1} & a_{2^m-1} & a_{2^m} & a_{2^m}  & a_{1} & a_{1}  & \dots &  a_{2^m} & a_{2^m}
		\end{array}
		\right)\mathbb{P}_{t+1}\nonumber\\
		=&\left(                 
		\begin{array}{llllllllllllllllllll}   
			a^{1}_{1} & a^{1}_{1} & a^{1}_{2} & a^{1}_{2} & \dots & a^{1}_{2^m-1} & a^{1}_{2^m-1} & a^{1}_{2^m} & a^{1}_{2^m}  & a^{1}_{1} & a^{1}_{1}  & \dots &  a^{1}_{2^m} & a^{1}_{2^m}
		\end{array}
		\right)\mathbb{P}_{t}\nonumber\\
		&\vdots \\
		=&\left(                 
		\begin{array}{llllllllllllllllllll}   
			a^{t+1}_{1} & a^{t+1}_{1} & a^{t+1}_{2} & a^{t+1}_{2} & \dots & \dots &  a^{t+1}_{2^m} & a^{t+1}_{2^m}
		\end{array}
		\right)\mathbb{P}_{0},\nonumber
	\end{align}	
	where the parameters in each step above are determined by
	\begin{align*}
		\left(                 
		\begin{array}{llll}   
			a_{1}^{t+1} & a_{2}^{t+1} & \dots & a_{2^{m}}^{t+1}    
		\end{array}
		\right)
		=&\left(                 
		\begin{array}{llll}   
			a_{1}^{t} &  \dots & a_{2^{m}}^{t}     
		\end{array}
		\right)\hat{\Phi}
		=\cdots 
		=\left(                 
		\begin{array}{llll}   
			a_{1} & a_{2} & \dots & a_{2^{m}}   
		\end{array}
		\right) \hat{\Phi}^{t+1}.
	\end{align*}
	According to Lemma \ref{lem:6.1}, as $t\rightarrow \infty$ 		\begin{align*}
		\left(                 
		\begin{array}{llll}   
			a_{1}^{t+1} & a_{2}^{t+1} & \dots & a_{2^{m}}^{t+1}   
		\end{array}
		\right)
		&\rightarrow\left(                 
		\begin{array}{llll}   
			a_{1} & a_{2} & \dots  & a_{2^{m}}   
		\end{array}
		\right)\xi_{1}\xi_{1}^{T}=\frac{\sum_{i=1}^{2^{m}}a_{i}}{2^{m}}\left(                 
		\begin{array}{llll}   
			1 & 1 & \dots  & 1   
		\end{array}
		\right).
	\end{align*}
	Because the initial data $\mathbb{P}_{0}$ is a probability distribution, then as $t\rightarrow \infty$, 
	\begin{align*}
		\hat{E}^{0}_{t+1}:=&\mathbb{E}_{K_{\lambda(t)}=0}\left[ f^{2}\right]=\left(                 
		\begin{array}{llllllllllllllllllll}   
			a^{t+1}_{1} & a^{t+1}_{1} & a^{t+1}_{2} & a^{t+1}_{2} & \dots & \dots &  a^{t+1}_{2^m} & a^{t+1}_{2^m}
		\end{array}
		\right)\mathbb{P}_{0}		\rightarrow\frac{\sum_{i=1}^{2^{m}}a_{i}}{2^{m}}.
	\end{align*}
	For general  $K_{\lambda(t)}$, 
	the error matrix between $\Theta_{K_{\lambda(t)}}$ and $\Phi^{0}$ can also be defined as $\epsilon(K_{\lambda(t)}):= \Theta_{K_{\lambda(t)}}-
	\Phi^{0}$. The rest of analysis is similar to Theorem \ref{th:5.3}, and we can get that the arbitrary finite point observable
	$\mathcal{E}^{f}_{\lambda(t)}( x_{1},\cdots,x_{m} )$ converges for  $\mathbb{P}_{0}$ and $K_{\lambda(t)}$ satisfies $ K_{\lambda(t)}\rightarrow 0$  as $t \rightarrow \infty$, which completes the proof.
\end{proof}

\begin{remark}
	These results indicate that once the scaling parameter satisfies
	$\lambda(t) \rightarrow \infty$, as $t\rightarrow \infty$
	and $K_{\lambda(t)}\rightarrow K^{*}$, as $\lambda(t)\rightarrow \infty.$
	The observable satisfies $\mathcal{E}^{f}_{\lambda(t)}(x_{1},\cdots,x_{m})\rightarrow\mathbb{E}_{K^{*}=0}\left[ f^{2}\right]$ as $t\rightarrow \infty$. For the two point correlation function
	\begin{align*}
		\langle \phi(x_{1},t),\phi(x_{2},t) \rangle_{\lambda(t)}	&=\mathbb{E}_{K_{\lambda(t)}}\left[ \phi(x_{1},t),\phi(x_{2},t)\right]-\mathbb{E}_{K_{\lambda(t)}}\left[ \phi(x_{1},t)\right]\mathbb{E}_{K_{\lambda(t)}}\left[ \phi(x_{2},t)\right].
	\end{align*}
	In the case $K^{*}=0$, the spin variable $\phi(x_{1},t)$ is independent to $\phi(x_{2},t)$ according to the definition. Hence, taking the limit $t\rightarrow \infty $, we know that
	\begin{align*}
		\langle \phi(x_{1},t),\phi(x_{2},t) \rangle_{\lambda(t)}	\rightarrow\langle \phi(x_{1},t),\phi(x_{2},t) \rangle_{K^{*}=0}=0,
	\end{align*}
	which coincides with the results in classical one dimensional Ising model.
	
\end{remark}

\section*{Acknowledgement}
This work is supported by the National Natural Science Foundation of China (Grant No. 12288201).



\begin{thebibliography}{99}
	
	
	\bibitem{Kadanoff2013document} Kadanoff, L. P. Kenneth Geddes Wilson, 1936–2013, an appreciation. {J. Stat. Mech. Theory Exp}, 2013, 10: P10016
	
	\bibitem{Apenko2012document} Apenko, S. M. Information theory and renormalization group flows. {Phys. A}, 2012, 391: 62-67
	
	
	\bibitem{Robledo2000document}Robledo,A. The Renormalization group and optimization of entropy. J.Stat.Phys ,2000,100:475-487
	
	\bibitem{Cardy1996book} Cardy, J. Scaling and Renormalization in Statistical physics. Vol. 5. Cambridge university press, 1996
	
	\bibitem{Cui2022document} Cui, K., Gong, F. The local Poincare inequality of stochastic dynamic and application to the Ising model. arXiv preprint arXiv:2210.06156, 2022
	
	\bibitem{Zinn2014book} Zinn-Justin, J. Phase transition and renormalization group. Peking University Press, 2014
	
	\bibitem{Balian2005document} Balian, R. Information in statistical physics. Studies in History and Philosophy of Science Part B: Studies in History and Philosophy of Modern Physics, 2005,36(2):323-353
	
	\bibitem{Zamolodchikov1986document} Zamolodchikov,A.B. Irreversibility of the Flux of the Renormalization Group in a 2D Field Theory. Jetp Letters, 1986, 43(12):565-567
	
	\bibitem{Cardy1988document} Cardy,  J.L.Is there a c-theorem in four dimensions? Phys.Lett.B, 1988, 215(4):749-752
	
	\bibitem{Forte1998document} Forte, S., Latorre,J. A proof of the irreversibility of renormalization group flows in four dimensions. Nucl. Phys. B, 1998, 535(3):709-728
	
	\bibitem{Amariti2011document} Amariti,A., Siani,M. F-maximization along the RG flows: a proposal. J. High Energ. Phys. 2011, 2011(11):1-18
	
	
	
	\bibitem{Gaite1996document} Gaite, J.,O'connor,D. Field theory entropy, the H-theorem, and the renormalization group. Phys. Rev. D, 1996, 54(8):5163
	\bibitem{Gaite1998document} Gaite,J. Relative entropy in 2D quantum field theory, finite-size corrections, and irreversibility of the renormalization group. Phys. Rev. Lett, 1998, 81(17):3587 
	
	\bibitem{Gaite2000document1} Gaite, J. Entropic C-theorems in free and interacting two-dimensional field theories. Phys.Rev.D, 2000, 61(4):045006
	
	\bibitem{Gaite2000document2} Gaite, J. Renormalization group irreversible functions in more than two dimensions. Phys.Rev.D, 2000, 62(12):125023
	
	
	
	
	
	\bibitem{Casini2017document} Casini, H.,Testé, E.,Torroba,G. Relative entropy and the RG flow. {J. High Energy Phys}, 2017,2017(3): 1-24		
	
	
	
	
	\bibitem{beny2015document} B\'eny, C.,Osborne,T.J. Information-geometric approach to the renormalization group. Phys. Rev. A, 2015, 92(2):022330
	
	
	
	\bibitem{Gaite2004document} Gaite, J. Stochastic formulation of the renormalization group: supersymmetric structure and the topology of the space of couplings. J. Phys. A: Math. Gen, 2004, 37(43):10409-10419 
	
	
	
	\bibitem{Narayanan2006document} Narayanan,R., Neuberger,H. Infinite $N$ phase transitions in continuum Wilson loop
	operators. J. High Energ. Phys, 2006, 2006(03):064
	\bibitem{Luscher2010document} Luscher,M. Trivializing maps, the Wilson flow and the HMC algorithm,
	Commun. Math. Phys, 2010, 293:899-919
	
	\bibitem{Abe2018document} Abe,Y., Fukuma,M. Gradient flow and the renormalization group,
	Progress of Theoretical and Experimental Physics, 2018, 2018(8):083B02
	
	\bibitem{Carosso2020document} Carosso, A. Stochastic renormalization group and gradient flow. {J. High Energy Phys}, 2020(1):1-21
	
	\bibitem{Li2018document} Li, S. H., Wang, L. Neural Network Renormalization Group. {Phys. Rev. Lett}, 2018,121(26):260601
	
	
	\bibitem{Damgaard1987Huffel} Damgaard, P. H., H\"{u}ffel, H. Stochastic quantization. {Phys. Rep}, 1987,152(5-6):227-398
	
	\bibitem{Gyongydocument1999} Gy\"{o}ngy, I. Lattice approximations for stochastic quasi-linear parabolic  partial differential equations driven by space-time white noise. {II}. {Potential Anal}, 1999,11:1-37
	
	\bibitem{Morandi2004document} Morandi, G., Napoli, F., Ercolessi, E. Statistical Mechanics 2nd. {World Scientifie }, 2001
	
	\bibitem{Kadanoffdocument2015} Kadanoff,L.P. Innovations in Statistical Physics, Annu. Rev. Condens. Matter Phys.,2015, 6(1):1-14
	
	\bibitem{Menzdocument2014}	Menz,G. The approach of Otto-Reznikoff revisited, Electron. J. Probab, 2014, 19: 1-27
	
	
	
	

	


	

	
	

	
	
\end{thebibliography}

\end{document}